\let\oldabstract\abstract
\renewcommand\abstract{%
  \providecommand\keywords{\par\medskip\noindent\textit{Keywords:}\xspace}
  \oldabstract\noindent\ignorespaces}
\numberwithin{equation}{section}
\theoremstyle{plain} 
\newtheorem{theorem}{Theorem}[section]
\newtheorem{lemma}[theorem]{Lemma}
\newtheorem{proposition}[theorem]{Proposition}
\newtheorem{definition}[theorem]{Definition}
\newtheorem{corollary}[theorem]{Corollary}
\def\R{\mathbb R}
\def\E{\mathbb E}
\def\F{\mathcal F}
\def\K{\mathcal K}
\def\N{\mathbb N}
\def\1{\mathbf{1}}
\def\X{\mathbf{X}}
\def\Kn{\widehat{K}_n}
\def\gn{\widehat{g}_n}
\def\Jn{\widehat{J}_n}
\def\wtheta{\widehat{\theta}_n}
\def\ttheta{\widetilde{\theta}_n}
\def\wrho{{\widehat{\rho}_n}}
\DeclareMathOperator*{\argmin}{arg\,min}  
\def\rm{{r_{min}}}
\def\rM{{r_{max}}}
\def\Dn{{|D_n|}}
\def\convl{\xrightarrow[n\rightarrow +\infty]{distr.}}
\def\convP{\xrightarrow[n\rightarrow + \infty]{\mathbb{P}}}
\def\convPs{\xrightarrow[n\rightarrow +\infty ]{a.s.}}
\def\cum{\mathrm{Cum}}
\begin{document}

\title{Contrast estimation for parametric stationary determinantal point processes}

\author[1]{Christophe Ange Napol\'eon Biscio}
\author[1,2]{Fr\'ed\'eric Lavancier}
\affil[1]{ Laboratoire de Math\'ematiques Jean Leray\\
University of Nantes, France}
 \affil[2]{ Inria, Centre Rennes Bretagne Atlantique, France}

\date{}

\maketitle

\begin{abstract}
We study minimum contrast estimation for parametric stationary determinantal point processes. These processes form a useful class of models for repulsive (or regular, or inhibitive) point patterns and are already applied in numerous statistical applications.  Our main focus is on minimum contrast methods based on the Ripley's $K$-function or on the pair correlation function. Strong consistency and asymptotic normality of theses procedures are proved under general conditions that only concern the existence of the process and its regularity with respect to the parameters. A key ingredient of the proofs is the recently established Brillinger mixing property of stationary determinantal point processes. This work may be viewed as a complement to the study of Y. Guan and M. Sherman who establish the same kind of asymptotic properties for  a large class  of Cox processes, which in turn are models for clustering (or aggregation). 

  \keywords Ripley's $K$ function, pair correlation function, Brillinger mixing, central limit theorem. \end{abstract}

\section{Introduction}

Determinantal point processes (DPPs) are models for repulsive (or regular, or inhibitive) point processes data.
They have been introduced by O. Macchi in~\cite{macchi1975coincidence} to model the position of fermions, which are particles that repel each others. Their probabilistic aspects have been studied thoroughly, in particular in~\cite{Soshnikov:00}, \cite{ShiraiTakahashi1:03} and~\cite{hough2009zeros}. 
Recently, DPPs have been studied and applied from a statistical perspective.  
A description of their main statistical aspects is conducted in~\cite{lavancierpublish} and they actually  turn out to be a well-adapted statistical model in domains  as  statistical learning  \cite{Kulesza:Taskar:12}, telecommunications \cite{deng2014ginibre,Miyoshi:Shirai13}, biology and ecology (see the examples in~\cite{lavancierpublish} and \cite{lavancier_extended}).

A DPP is defined through a kernel $C$, basically a covariance function. Assuming a parametric form for $C$, several estimation procedures are considered in \cite{lavancierpublish}, specifically the maximum likelihood method and minimum contrast procedures based on  the Ripley's $K$ function or the pair correlation $g$.  
These methods are implemented in the \texttt{spatstat} library \cite{BaddelyRubakTurner15,baddeley:turner:05}
of  R \cite{R:15}.
From  the simulation study conducted in \cite{lavancierpublish} and \cite{lavancier_extended},  see also Section~\ref{estimation}, the maximum likelihood procedure seems to be the best method  in terms of quadratic loss.  
However, the expression of the likelihood relies in theory on a spectral representation of $C$, which is rarely known in practice, and some Fourier approximations are introduced in  \cite{lavancierpublish}. The likelihood  also involves the determinant of a $n\times n$ matrix, where $n$ is the number of observed points, which is prohibitively time consuming to compute when $n$ is large. In contrast, the estimation procedures based on   $K$ or $g$ do not require the knowledge of any spectral representation of $C$ and are faster to compute in presence of large datasets, which explain their importance in practice.

From a theoretical point of view, neither the likelihood method nor the minimum contrast methods for DPPs have been studied thoroughly, even in assuming that a spectral method for $C$ is known. In this work, we focus on parametric stationary DPPs and we prove the strong consistency and the asymptotic normality of the minimum contrast estimators based on $K$ and $g$. These questions are in connection with the general investigation of  Y. Guan and M. Sherman~\cite{GuanSherman:07}, who study the asymptotic properties of the latter estimators for stationary point processes. However the setting in~\cite{GuanSherman:07} has a clear view to Cox processes and the assumptions involve both $\alpha$-mixing and Brillinger mixing conditions, which are indeed satisfied for a large class of Cox processes. Unfortunately these results do no apply straightforwardly to DPPs. We consider instead more general versions of the asymptotic theorems in~\cite{GuanSherman:07} and we prove that they apply nicely to DPPs. Our main ingredient then becomes the Brillinger mixing property of stationary DPPs, recently proved in~\cite{bisciobrillingerTCL}, and we do not need  any $\alpha$-mixing condition. 
Our asymptotic results finally gather a very large class of stationary DPPs, where the main assumptions are quite standard and only concern the regularity of the kernel $C$ with respect to the parameters. 
As an extension to the results in~\cite{GuanSherman:07}, it is worth mentioning the study of~\cite{Waagepetersentwostepestimation}  dealing with constrast estimation for some  inhomogeneous spatial point processes, still under a crucial $\alpha$-mixing condition. We do not address this generalization for DPPs in the present work. 

The remainder of this paper is organized as follows. In Section~\ref{dpps_estimation}, we recall the definition of stationary DPPs, some of their basic properties and we discuss parametric estimation of DPPs. Our main results are presented in Section~\ref{section special case minimum contrast}, namely the asymptotic properties of the minimum contrast estimators of a DPP based on the $K$ or the $g$ function. Section~\ref{proofs} gathers the proofs of our main results. In the appendix, we finally  present our general asymptotic result for minimum contrast estimators and some auxiliary materials.

\section{Stationary DPPs and parametric estimation}\label{dpps_estimation}

\subsection{Stationary DPPs}\label{dpps}
 We refer to~\cite{daleyvol1,daleyvol2} for a general presentation on point processes. Let $\X$ be a simple point process on $\R^d$. For a bounded set $D\subset\R^d$, denote by $\X(D)$ the number of points of $\X$ in $D$ and let $E$ be the expectation over the distribution of $\X$.
 If there exists a function $\rho^{(k)}: (\R^d)^k \to \R^+$, for $k\geq 1$,  such that for any family of mutually disjoint subsets $D_1,\dots,D_k$ in $\R^d$
\[E \prod_{i=1}^k \X(D_i) = \int_{D_1}\dots\int_{D_k} \rho^{(k)}(x_1,\dots,x_k) dx_1\dots dx_k,\]
then this function is called the joint intensity of order $k$   of $\X$. 
If $\X$ is stationary, $ \rho^{(k)}(x_1,\dots,x_k)= \rho^{(k)}(0,x_2-x_1,\dots,x_k-x_1)$ and in particular $\rho^{(1)}=\rho$ is a constant. 
From its definition, the  joint intensity of order $k$ is unique up to a Lebesgue nullset.  Henceforth, for ease of
presentation, we ignore 
nullsets. In particular  we will say that a function is continuous whenever
there exists a continuous version of it.

Determinantal point processes (DPPs) are defined through their joint intensities. We refer to the survey by Hough et al. \cite{hough2009zeros} for a general presentation including the non-stationary case and the extension to complex-valued kernels. 
 We focus in this work on stationary DPPs and so we restrict the definition to this subclass. We also consider for simplicity real-valued kernels.  

\begin{definition}\label{DPPdefinition intro}
Let $C: \R^d \rightarrow \R$ be a function.
A point process $\X$ on $\R^d$ is  a stationary DPP with kernel $C$, in short $\X\sim DPP(C)$,  if for all $k\geq 1$ its joint intensity of order $k$ exists and satisfies the relation
\begin{align*}
 \rho^{(k)}(x_1,\ldots x_k)=\det [C](x_1,\dots,x_k)
\end{align*}
for every $(x_1,\dots,x_k)\in \R^{dk}$, where $[C](x_1,\dots,x_k)$ denotes the matrix with entries $C(x_i-x_j)$,  $1\leq i,j\leq k$.
\end{definition}

Conditions on $C$ ensuring the existence of $DPP(C)$ are recalled in the next proposition.  We define the Fourier transform of a function $h\in L^1(\R^d)$ as 
 \begin{align*}
  \F(h)(t)=\int_{\R^d} h(x) e^{2i\pi x\cdot t}dx,\quad \forall t \in \R^d
 \end{align*}
and we consider its extension to  $L^2(\R^d)$ by Plancherel's theorem, see~\cite{stein1971fourier}.

\medskip

\noindent{\bf Condition $\K(\rho)$.} A kernel $C$ is said to verify condition $\K(\rho)$ if $C$ is a symmetric continuous real-valued function in $ L^2(\R^d)$ with $C(0)=\rho$ and $0\leq \F(C)\leq 1$.


\begin{proposition}[\cite{Soshnikov:00, lavancierpublish}]\label{DPPexistence intro}
Assume $C$ satisfies $\K(\rho)$. Then $DPP(C)$ exists and is unique if and only if  $0\leq \F(C)\leq 1$.
\end{proposition}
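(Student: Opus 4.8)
The plan is to deduce the statement from the general existence-and-uniqueness criterion for DPPs (the Macchi--Soshnikov theorem), specialised to a translation-invariant kernel. To $C$ I would associate the integral operator $T$ on $L^2(\R^d)$ formally given by $Tf(x)=\int_{\R^d} C(x-y)f(y)\,dy$, i.e.\ convolution by $C$. The Macchi--Soshnikov criterion asserts that $DPP(C)$ exists if and only if $T$ is a self-adjoint, locally trace-class operator with spectrum in $[0,1]$, equivalently $0\preceq T\preceq I$ in the sense of quadratic forms, and that the process is then unique. The whole argument thus reduces to translating each operator-theoretic condition into the hypotheses of $\K(\rho)$ together with the spectral bound $0\le\F(C)\le1$.

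First I would verify the structural requirements on $T$. Symmetry and real-valuedness of $C$ make $T$ self-adjoint. The local trace-class property is where continuity and $C(0)=\rho$ enter: for any bounded Borel set $B$, the compression $\1_B T\1_B$ is an integral operator with continuous kernel on a bounded set, and its trace equals $\int_B C(0)\,dx=\rho\,|B|<+\infty$, so $T$ is locally trace class. These steps use only the regularity half of $\K(\rho)$ and are routine.

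The core of the argument is identifying the spectral condition with $0\le\F(C)\le1$. Since $T$ is convolution by $C\in L^2(\R^d)$, Plancherel's theorem turns it into multiplication by $\F(C)$ on the Fourier side: for every $f\in L^2(\R^d)$,
\[
\langle Tf,f\rangle=\int_{\R^d}\F(C)(t)\,|\F(f)(t)|^2\,dt.
\]
Hence the operator inequalities $0\preceq T\preceq I$ hold if and only if $0\le\F(C)(t)\le1$ for almost every $t$, which is exactly the stated condition and establishes both its necessity and its sufficiency at once. Combined with the previous paragraph, the Macchi--Soshnikov criterion then yields existence precisely when $0\le\F(C)\le1$.

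For uniqueness I would invoke the fact that a DPP is determined by its family of joint intensities, together with a determinacy argument for the associated factorial moment problem: Hadamard's inequality applied to the positive semidefinite matrix $[C](x_1,\dots,x_k)$ gives $\rho^{(k)}=\det[C](x_1,\dots,x_k)\le C(0)^k=\rho^k$, so the moments grow slowly enough (a Carleman-type bound) to pin down the distribution. The main obstacle, and the part I would lean on the cited references for rather than reprove, is the rigorous passage from the operator $T$ on the non-compact space $\R^d$ to a genuine point process: constructing a Kolmogorov-consistent family of DPPs on bounded windows from the local spectral data and checking that the resulting joint intensities are the prescribed determinants. The spectral translation above is the clean conceptual heart, while this measure-theoretic construction is the technical weight carried by the results of Soshnikov and of Lavancier et al.
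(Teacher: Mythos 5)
The paper does not prove this proposition at all: it is quoted with a citation to Soshnikov and to Lavancier et al., and the argument behind those references is precisely the one you reconstruct, namely the Macchi--Soshnikov criterion applied to the convolution operator $Tf = C * f$, the Plancherel identity
\begin{align*}
\langle Tf,f\rangle=\int_{\R^d}\F(C)(t)\,|\F(f)(t)|^2\,dt
\end{align*}
translating $0\preceq T\preceq I$ into $0\le\F(C)\le1$, and uniqueness via the Hadamard bound $\rho^{(k)}\le C(0)^k=\rho^k$ which makes the moment problem determinate. So in structure your proposal is the standard proof and the one the paper implicitly relies on.

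There is, however, one step whose justification would fail as written: the local trace-class property. Continuity of the kernel on a bounded set does \emph{not} imply that $\1_B T \1_B$ is trace class (there are classical examples of continuous kernels on compact sets whose integral operators are not trace class), and the formula $\mathrm{Tr}(\1_B T \1_B)=\int_B C(0)\,dx$ only makes sense once the trace-class property is already established. What rescues the step is positivity: by Mercer-type results, a \emph{positive semidefinite} operator with continuous kernel is locally trace class with trace equal to the integral of the diagonal. Consequently your claim that this step ``uses only the regularity half of $\K(\rho)$'' is wrong, and the logic must be ordered accordingly. In the sufficiency direction, $\F(C)\ge 0$ is part of the hypothesis, so $T\succeq 0$ and Mercer applies before invoking Macchi--Soshnikov. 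In the necessity direction, existence of the process forces all principal minors $\det[C](x_1,\dots,x_k)$ to be nonnegative, hence $C$ is a continuous positive-definite function, hence $T\succeq 0$ and again locally trace class, after which Soshnikov's necessity statement yields $T\preceq I$, i.e.\ $\F(C)\le 1$. With this reordering your argument is complete; without it, the structural paragraph is a non sequitur.
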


In short, $DPP(C)$ exists whenever $C$ is a continuous covariance function in $L^2(\R^d)$ with $\F(C)\leq 1$. This  makes easy the construction of parametric families of DPPs, simply  considering parametric families of covariance functions where the condition $\F(C)\leq 1$ appears as a constraint on the parameters. Some examples are given in \cite{lavancierpublish}, \cite{bisciobernoulli} and in the next section. 

By definition, all moments of a DPP are known, in particular the pair correlation (pcf) and the Ripley's $K$-function can explicitly be expressed in terms of the kernel.  For $C$ satisfying $\K(\rho)$, let $R(x)=C(x)/C(0)$ be the correlation function associated to $C$.
The pcf, defined in the stationary case for all $x\in\R^d$ by $g(x) = {\rho^{(2)}(0,x)}/{\rho^2}$, writes
\[ g(x) = 1 - R^2(x).\]
The Ripley's $K$-function is in turn given for all $t\geq 0$ by
 \begin{align}\label{defK}
  K(t) = \int_{B(0,t)} g(x) dx = \int_{B(0,t)} (1 - R^2(x)) dx
 \end{align}
  where $B(0,t)$ is the Euclidean ball centred at $0$ with radius $t$. 
For later purposes, we denote by  $c_{[k]}^{red}$  the density of the reduced factorial cumulant moment measures  of order $k$ of $\X$. 
We  refer to \cite{bisciobrillingerTCL} for the definition and further details, where the following particular cases are derived. Assuming that the kernel $C$ of $\X$ satisfies $\K(\rho)$, we have for all $(u,v,w) \in \R^{3d}$
\begin{align}
& c^{red}_{[2]}(u) = - C^2(u), \label{expression densite cumulant 2}\\
& c^{red}_{[3]} (u,v) = \ 2 \ C(u)C(v)C(v-u), \label{expression densite cumulant 3} \\
& c^{red}_{[4]} (u,v,w) = -2 \big[  C(u)C(v)C(u-w)C(v-w) + C(u) C(w) C(u-v) C(v-w) \notag   
   \\&\hspace{2cm}+ C(v) C(w) C(u-v) C(u-w)  \big]. \label{expression densite cumulant 4}
\end{align}

\subsection{Parametric estimation of DPPs}\label{estimation}

We consider a parametric family of DPPs with kernel $C_{\rho,\theta}$ where $\rho=C_{\rho,\theta}(0)>0$ and  $\theta$ belongs to a subset $\Theta_{\rho}$ of $\R^p$, for a given $p\geq1$. To ensure the existence of $DPP(C_{\rho,\theta})$, we assume that for all $\rho>0$ and any $\theta \in \Theta_\rho$, the kernel $C_{\rho,\theta}$ verifies $\K(\rho)$, which explains the indexation of $\Theta_{\rho}$ by $\rho$. We assume further that for a given $\rho_0>0$ and $\theta_0$ in the interior of $\Theta_{\rho_0}$ (provided this interior is non-empty) we observe the point process $\X \sim DPP(C_{\rho_0,\theta_0})$ on a bounded domain $D_n \subset \R^d$. 

\medskip

The standard estimator of the intensity $\rho_0$ is  
\begin{align}\label{estimation rho}
 \wrho= \frac{1}{|D_n|} \sum_{x\in \X} \1_{\lbrace x \in D_n\rbrace}
\end{align}
where $|D_n|$ denotes the Lebesgue volume of $D_n$.
Since a stationary DPP is ergodic, see~\cite{Soshnikov:00},  this estimator is strongly consistent by the ergodic theorem, and  it is asymptotically normal, cf \cite{SoshnikovGaussianLimit} and \cite{bisciobrillingerTCL}.  In the following, we focus  our attention on the estimation of $\theta_0$. 
As explained in~\cite{lavancier_extended}, likelihood inference is in theory feasible if we know a spectral representation of $C_{\rho,\theta}$ on $D_n$. Unfortunately no spectral representations are known in the general case and some Fourier approximations are introduced in~\cite{lavancier_extended}. 
Another option is to consider minimum contrast estimators (MCE) as described below.

\medskip

For $\rho>0$ and $\theta\in \Theta_{\rho}$, let $J(.,\theta)$ be a function from $\R^d$ into $\R^+$ which is a summary statistic of $DPP(C_{\rho,\theta})$ that does not depend on $\rho$. In the DPP's case, the most important and natural examples are the $K$-function and the pcf $g$, that we study in detail in the following. Consider $\Jn$ an estimator of $J$ from the observation of $\X$ on $D_n$. Further, let $c\in \R$, $c\neq 0$, be a parameter such that $\Jn(t)^c$ and $J(t,\theta)^c$ are well defined for all  $t\in \R$ and $\theta\in\Theta_{\rho_0}$. Finally, define for $0\leq \rm < \rM$, the discrepancy measure
\begin{align}\label{expression Un discrepancy measure}
 U_n(\theta) = \int_{\rm}^{\rM} w(t) \left\lbrace \Jn(t)^c - J(t,\theta)^c \right\rbrace^2 dt
\end{align}
where $w$ is a smooth weight function. 
The MCE of $\theta_0$ is  
\begin{align}\label{definition thetan}
 \wtheta = \argmin_{\theta \in \Theta_{\hat \rho _n}}  U_n(\theta).
\end{align}

For example, let us consider  the parametric family of DPPs with Gaussian kernels 
\begin{align}\label{gaussian kernel intro}
 C(x) = \rho e^{-\left|\frac{ x}{\alpha}\right|^2},\quad x\in\R^d,
\end{align}
where $|.|$ denote the Euclidean norm on $\R^d$, $\rho>0$ and $\alpha \leq 1/(\sqrt{\pi}\rho^{1/d})$, the latter constraint on the parameter space being a  consequence of the existence condition $\F(C)\leq 1$ in $K(\rho)$. Some realizations are shown in Figure~\ref{figure overview DPP intro}. For comparison, we have estimated the parameter $\alpha$ of this model with the MCE \eqref{definition thetan} when $J$ corresponds to  $K$ or  $g$, and with the maximum likelihood method (using the Fourier approximation of the spectral representation of $C$ introduced in~\cite{lavancierpublish}). The estimators of $K$ and $g$, in place of $\Jn$ in \eqref{definition thetan}, are standard and recalled in Sections~\ref{section theorem ripley}-\ref{section theoreme pcf}, see also~\cite[Chapter 4]{mollerstatisticalinference}.
For the tuning parameters, we followed the standard  choice $w(t)=1$, $r_{\min}=0.01$, $r_{\max}$ as one quarter of the side length of the window and $c=0.5$ as recommended in \cite{Diggle:03} for repulsive point processes.
This simulation study has been carried out with the functions implemented in the \texttt{spatstat} library. 
Table~\ref{table MSE gauss} reports the mean squared errors  of the three mentioned methods over $500$ realisations of $DPP(C)$ with $\rho=100$ and $\alpha= 0.01,\ 0.03,\ \frac{1}{10\sqrt{\pi}}$, observed on $[0,1]^2$, $[0,2]^2$ and $[0,3]^2$.

\begin{figure}[h]
\begin{center}
\begin{tabular}{ccc} 
 \includegraphics[scale=0.35]{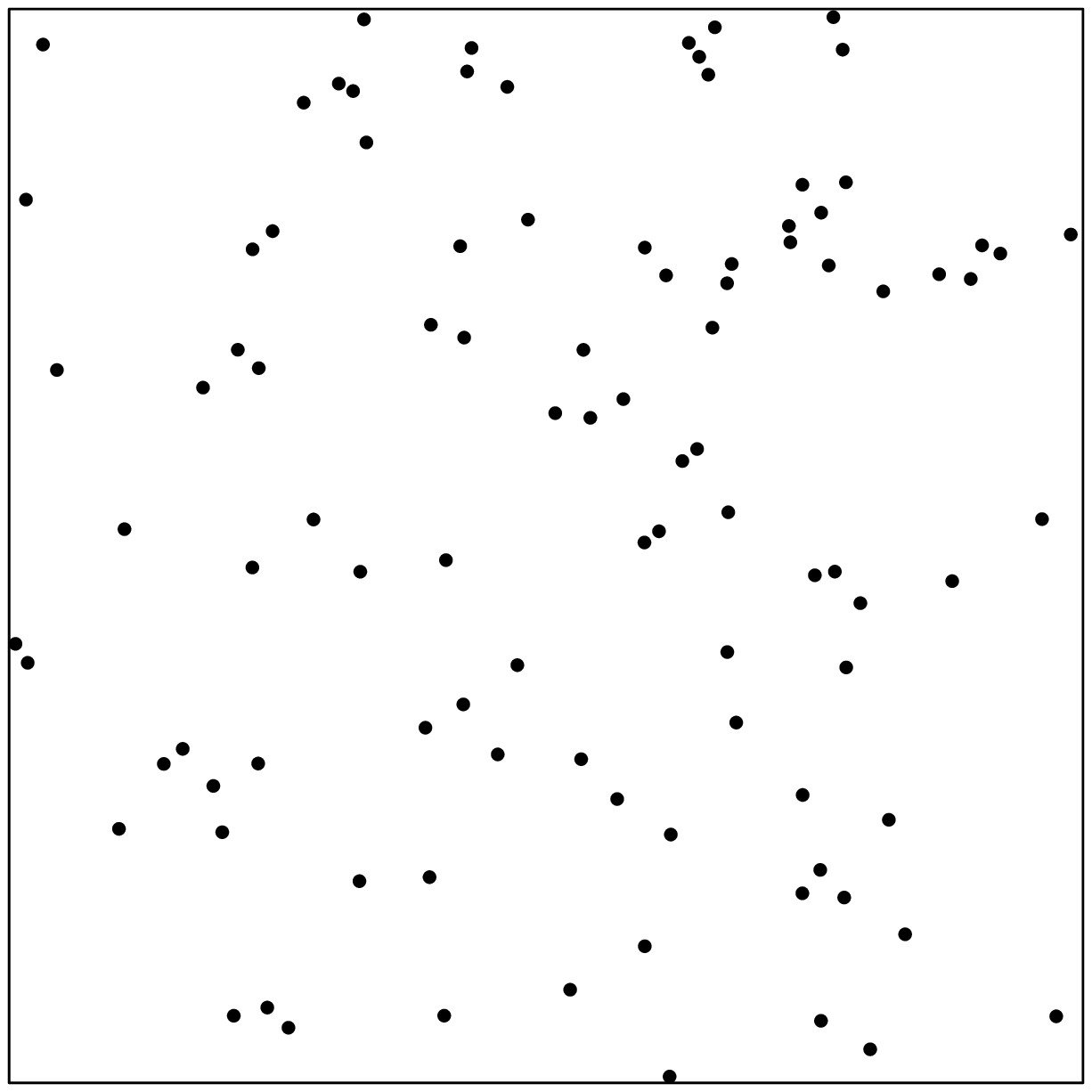} &  \includegraphics[scale=0.35]{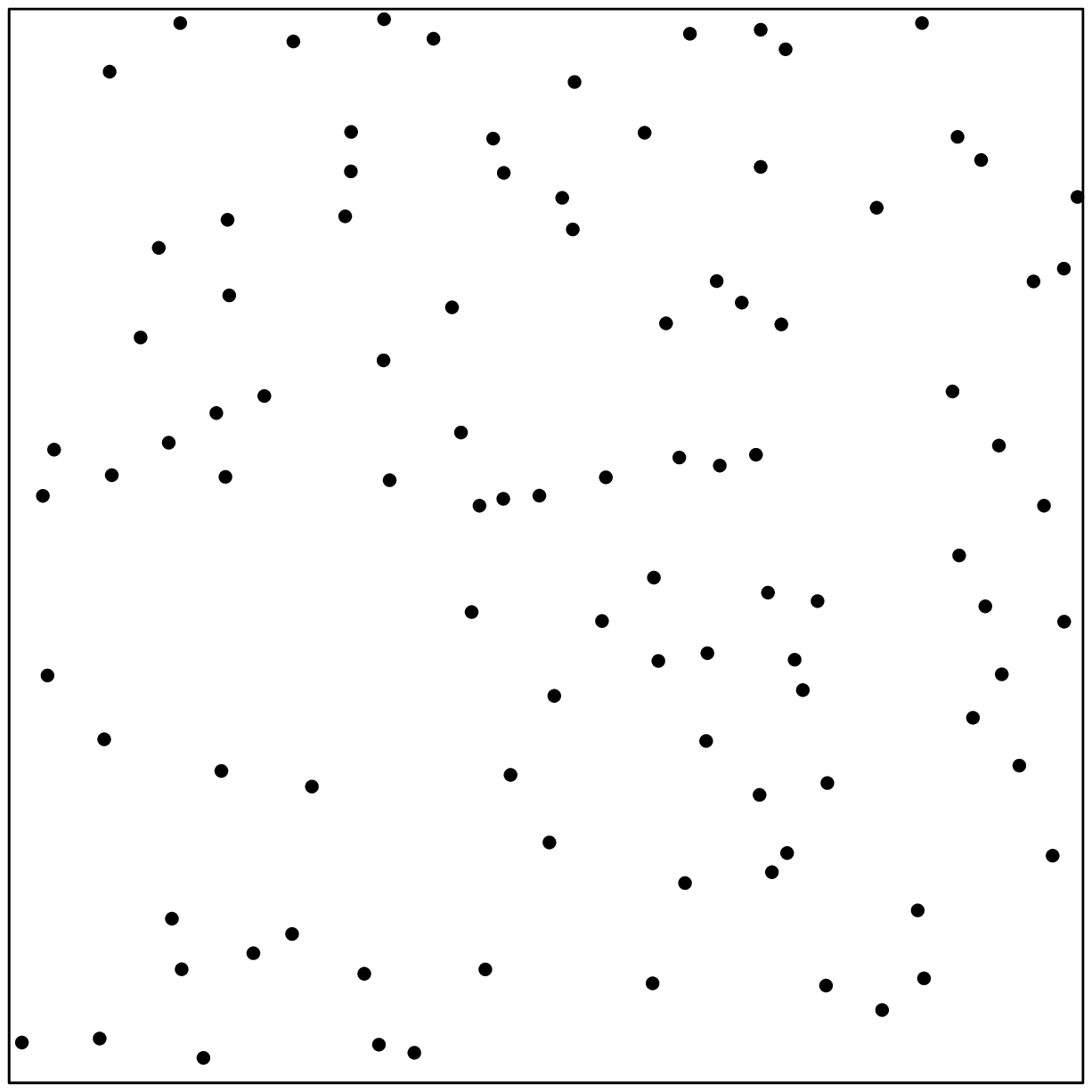}  &  \includegraphics[scale=0.35]{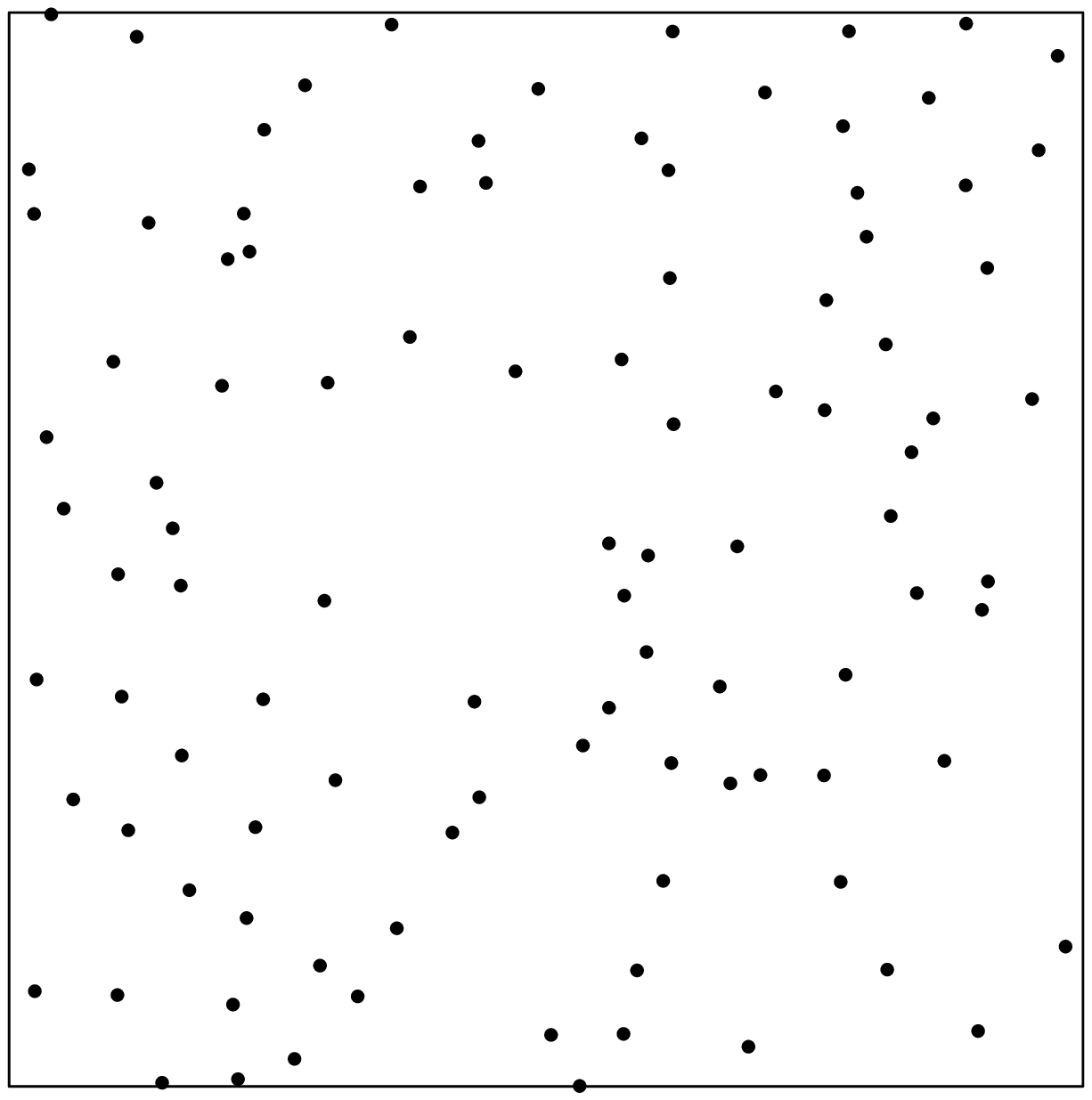}
\end{tabular}
\caption{Realizations on $[0,1]^2$ of DPPs with kernel~\eqref{gaussian kernel intro} where $\rho=100$ and from left to right $\alpha= 0.01,\ 0.03,\ \frac{1}{10\sqrt{\pi}}$.}\label{figure overview DPP intro}
\end{center}
\end{figure}

\begin{table}[h]
\newcommand{\mc}[3]{\multicolumn{#1}{#2}{#3}}
\begin{center}
\def\arraystretch{1.3}
\resizebox{\textwidth}{!} {
\begin{tabular}[c]{l|c c c| c c c| c c c}
\cline{2-10}
 & \mc{3}{c|}{$[0,1]^2$} & \mc{3}{c|}{$[0,2]^2$} & \mc{3}{c|}{$[0,3]^2$}\\\cline{2-10}
\mc{1}{c|}{}                              & $K$ & $g$ & ML & $K$ & $g$ & ML & $K$ & $g$ & \mc{1}{c|}{ML} \\ \cline{1-10}
\mc{1}{|l|}{$\alpha=0.01$}                & 2.026 & 1.039  & 1.032  & 0.848  &  {0.309} & 0.220  &  0.521  & {0.175} & \mc{1}{c|}{0.096}\\
\mc{1}{|l|}{ $\alpha=0.03$}               & 1.214 & 0.706  & 0.786  & 0.419  & {0.248 } & 0.175  & 0.231 & {0.180} & \mc{1}{c|}{0.084}\\
\mc{1}{|l|}{ $\alpha =1/( 10\sqrt{\pi})$} & 0.356  & {0.588} & 0.225  & 0.113  & {0.258 } &  0.061 & 0.051 & {0.176} & \mc{1}{c|}{0.022}\\   \hline
\end{tabular}}
\caption{Mean squared errors of the  MCE \eqref{definition thetan} when $J=K$, $J=g$, and  the maximum likelihood method  estimator (ML) as approximated in~\cite{lavancierpublish}. These values are estimated from 500 realizations of DPPs on $[0,1]^2$,  $[0,2]^2$ and $[0,3]^3$ with kernel~\eqref{gaussian kernel intro}, $\rho=100$ and $\alpha= 0.01,\ 0.03,\ \frac{1}{10\sqrt{\pi}}$. All entries are multiplied by $10^{4}$ to make the table more compact.} \label{table MSE gauss}
\end{center}
\end{table}

For all methods considered in Table~\ref{table MSE gauss}, the estimators seem consistent and the precision, in the sense of the mean squared errors, increases with the size of the observation window. From these results, the maximum likelihood method seems to be the best method in terms of quadradic loss, which agrees with the observations made in~\cite{lavancierpublish}. However, MCEs, especially the one based on $g$, seem to perform reasonably well. Moreover, their computation is faster than the maximum likelihood method and do not rely on an approximated spectral representation of $C$. For instance, with a regular laptop, the estimation of $\alpha$ for 500 realizations on  $[0,3]^2$ took about 30 minutes for the MCEs based on $K$ and $g$ against more than 7  hours by the maximum likelihood method. Finally, it seems that each estimator has an asymptotic Gaussian behaviour, as illustrated in Figure~\ref{figure histogram DPP} where we have represented the histograms obtained from the estimations of $\alpha=0.03$  over 500 realizations on $[0,1]^2$ as in Table~\ref{table MSE gauss}. The remainder of this paper is dedicated to proving the asymptotic normality of the MCE \eqref{definition thetan}  when $J=K$ or $J=g$ and $\X$ is a stationary DPP. The asymptotic properties of the maximum likelihood estimator remain an open problem. Note finally that a solution to improve the efficiency of the MCEs, still avoiding the computation of the likelihood, is to construct an optimal linear combination of the MCE  based on $K$ and the MCE based on  $g$, see \cite{Lavancier:Rochet:2014} for a general presentation of the procedure and \cite{lavancier_moller15} for an example in spatial statistics.

\begin{figure}[H]
\begin{center}
\begin{tabular}{ccc} 
 \includegraphics[scale=0.34]{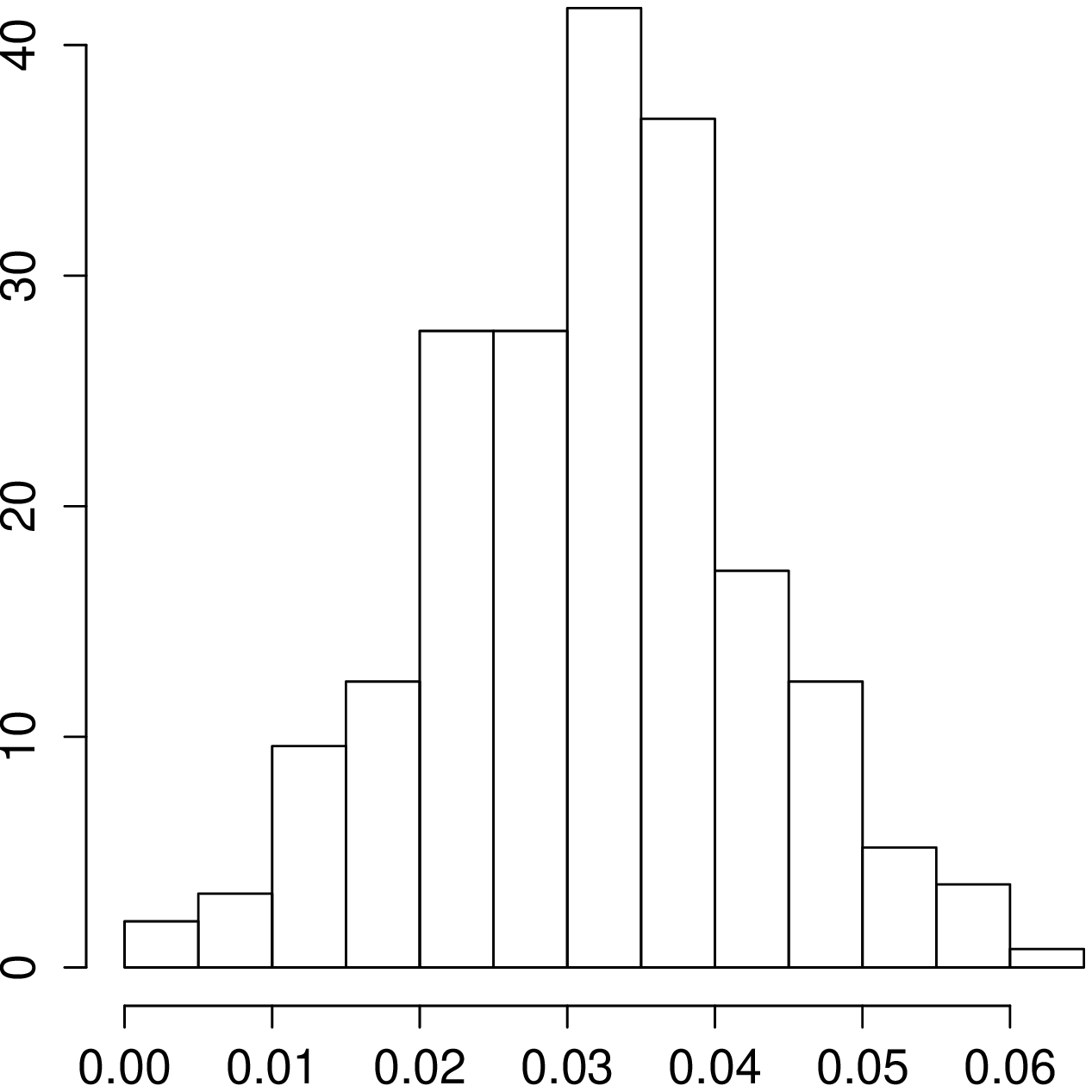} &  \includegraphics[scale=0.34]{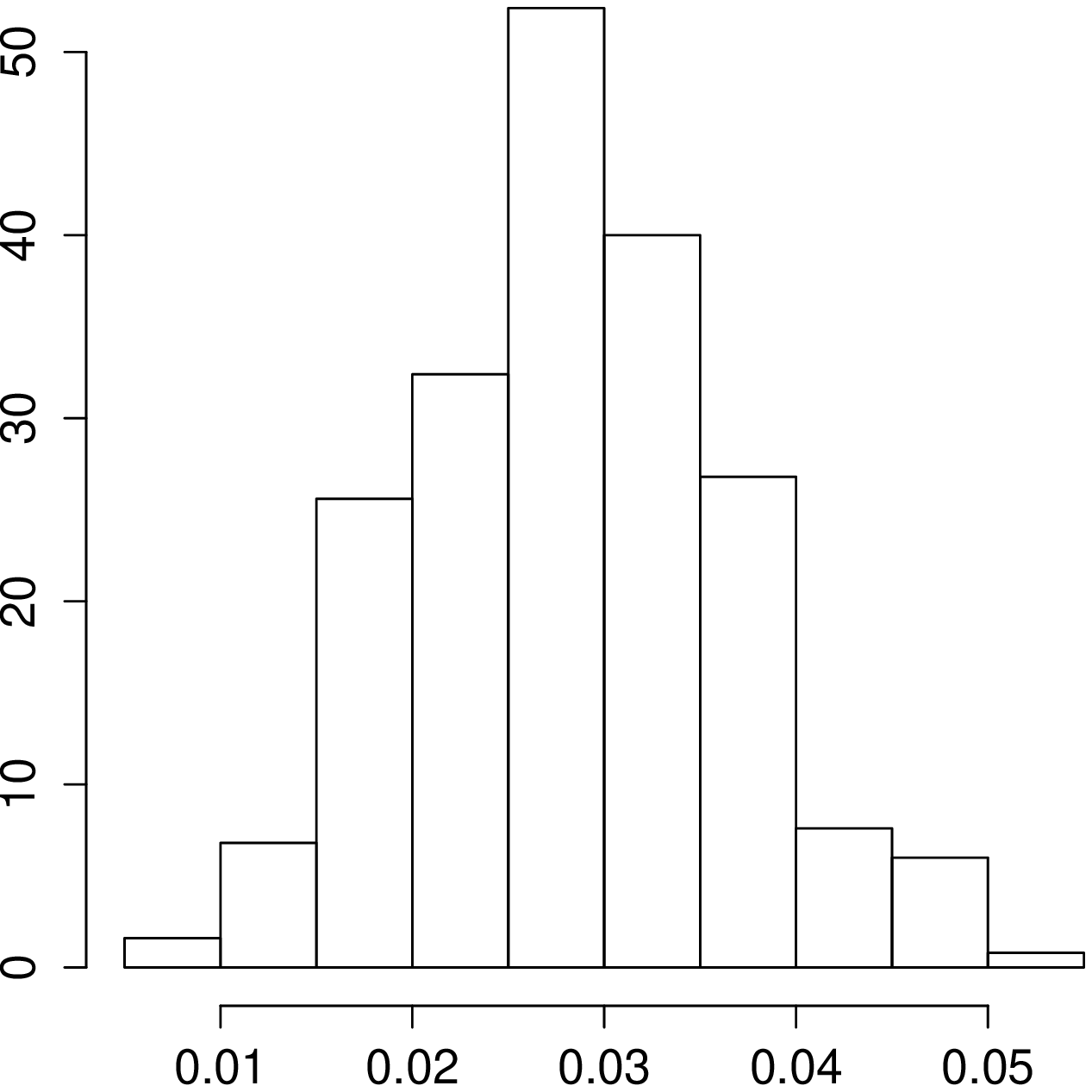}  &  \includegraphics[scale=0.34]{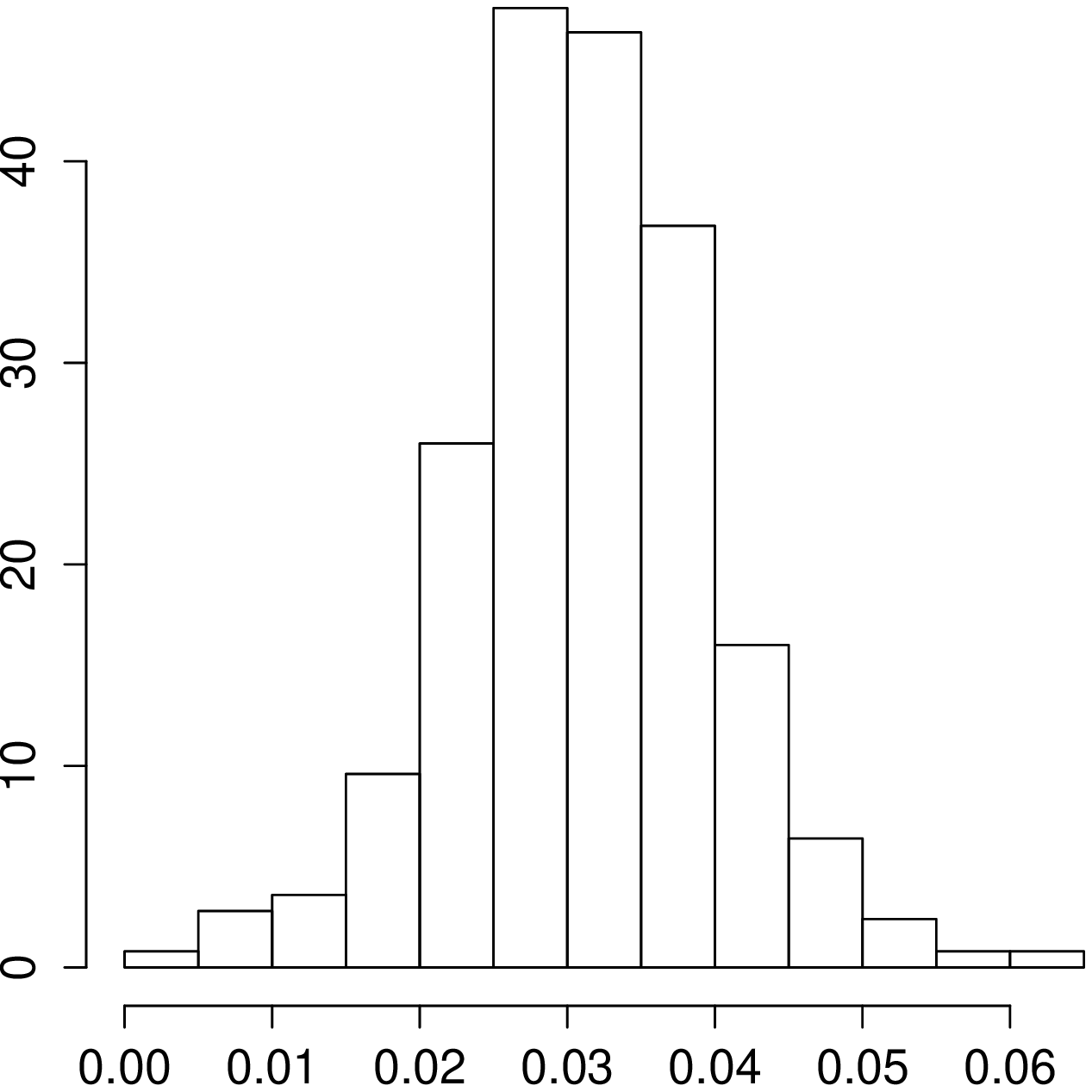}
\end{tabular}
\caption{Histograms of  the estimations of $\alpha=0.03$ from 500 realizations of  DPPs with kernel~\eqref{gaussian kernel intro} on $[0,1]^2$. From left to right :  MCE  \eqref{definition thetan}  based on $K$, MCE \eqref{definition thetan}  based on $g$ and maximum likelihood estimator.}\label{figure histogram DPP}
\end{center}
\end{figure}

\section{Asymptotic properties of minimum contrast estimators based on $K$ and $g$}\label{section special case minimum contrast}

\subsection{Setting}\label{setting}

In the next sections we study the asymptotic properties of \eqref{definition thetan} when $J=K$ and $J=g$, respectively.
The asymptotic is to be understood in the following way. We assume to observe one realization of $\X$ on $D_n$ and we let $D_n$ to expand to $\R^d$ as detailed below. We denote by $\partial D_n$  the boundary of $D_n$.
\begin{definition}\label{definition regular set}
 A sequence of subsets $\lbrace D_n\rbrace_{n\in \N}$ of $\R^d$ is called regular if for all $n\in\N$, $D_n\subset D_{n+1}$, $D_n$ is compact, convex and there exist constants $\alpha_1$ and $\alpha_2$ such that
 \begin{align*}
  \alpha_1 n^d &\leq \Dn \leq \alpha_2 n^d, \\
  \alpha_1 n^{d-1} &\leq \mathcal{H}_{d-1} \left( \partial D_n \right) \leq \alpha_2 n^{d-1}
 \end{align*}
 where $\mathcal{H}_{d-1}$ is the $(d-1)$-dimensional Hausdorff measure.
\end{definition}

Henceforth, we consider the estimator \eqref{definition thetan} under the setting of Section~\ref{estimation} where $\lbrace D_n\rbrace_{n\in \N}$ is a sequence of regular subsets of $\R^d$. 
Moreover, for any $\rho>0$ and $\theta\in \Theta_{\rho}$, we assume that the correlation function associated to $C_{\rho,\theta}$, denoted by  $R_\theta$, does not depend on $\rho$ but only on $\theta$, i.e.  $R_\theta = C_{\rho,\theta}/\rho$. 
Note that this is the case for all parametric families considered in~\cite{lavancierpublish} and \cite{bisciobernoulli}, including the  Whittle-Matèrn, the generalized Cauchy and the generalized Bessel families.

 For $r>0$, we denote by $\Theta_{\rho_0}^{\oplus r}:=  \Theta_{\rho_0} + \overline B(0,r)$ the $r$-dilation  of $\Theta_{\rho_0}$, where $\overline B(0,r)$ denotes the closed ball centred at $0$ with radius $r$. Further, for all $x\in\R^d$, denote $R^{(1)}_\theta(x)$ and $R^{(2)}_\theta(x)$, the gradient, respectively the Hessian matrix, of $R_\theta(x)$  with respect to $\theta$. We make the following assumptions. Specific additional hypotheses in the case $J=K$ and $J=g$ are described in the respective sections. 

\begin{enumerate}[label=($\mathcal{H}$\arabic*)]
 \item For all $\rho>0$, $\Theta_{\rho}$ is a compact convex set with non-empty interior and the mapping $\rho \rightarrow \Theta_\rho$ is continuous with respect to the Haussdorff distance on the compact sets. 
 \label{assumption general domaine}

 \item For all $\theta \in \Theta_{\rho_0}$, $C_{\rho_0,\theta}$ verifies the condition $\K(\rho_0)$ and there exists $\epsilon>0$ such that for all $\theta \in \Theta_{\rho_0}^{ \oplus \epsilon}$, $C_{\rho_0,\theta} \in L^2(\R^d)$ and $\F(C_{\rho_0,\theta})\geq 0$. \label{assumption general Krho}

  \item There exists $\epsilon>0$ such that for all $x\in B(0,\rM)$, the function $\theta \mapsto R_\theta(x)$ is of class $\mathcal{C}^2$ on $\Theta_{\rho_0}^{ \oplus \epsilon} $.
  Further, for $i\in \lbrace 1,2\rbrace$, there exists $M>0$ such that for all $x\in B(0,\rM)$ and $\theta\in \Theta_{\rho_0}^{ \oplus \epsilon} $, $\big|R_{\theta}^{(i)}(x)\big| \leq M$.  \label{assumption general differentiabilty}
\end{enumerate}

The first assumption is needed to handle the fact that the minimisation \eqref{definition thetan} is done over the random set $\Theta_{\hat \rho_n}$ in place of $\Theta_{\rho_0}$.  The two other assumptions deal with the regularity of the kernel with respect to the parameters.

\subsection{MCE based on $K$}\label{section theorem ripley}
Since for  any $\rho>0$ and $\theta \in \Theta_{\rho}$ $R_\theta = C_{\rho,\theta}/\rho$ is assumed to  not depend on $\rho$, the $K$-function \eqref{defK} of $DPP(C_{\rho,\theta})$ does not depend on $\rho$. Consequently we denote it by $K(.,\theta)$. 
 For all $t\geq 0$ and $n\in \N$, we consider the estimator of the $K$-function, see~\cite[Chapter 4]{mollerstatisticalinference},\begin{align}\label{formula estimator ripley}
   \Kn(t) := \frac{1}{\wrho{}\hspace*{-1.5mm}^2} \sum_{(x,y) \in \X^2}^{\neq} \1_{\lbrace x\in D_n \rbrace} \1_{\lbrace y \in D_n^{\circleddash t} \rbrace}   \ \frac{\1_{\lbrace |x-y|\leq t \rbrace}}{|D_n^{\circleddash t}|}
\end{align}
where $\wrho$ is as in~\eqref{estimation rho} and for  $t\geq 0$, $D_n^{\circleddash t} : = \left\lbrace x\in D_n , \ B(x,t) \in D_n\right\rbrace $. 

For all $t\in [\rm,\rM]$, denote by $K^{(1)}(t,\theta)$ and $K^{(2)}(t,\theta)$ the gradient and the Hessian matrix of $K(t,\theta)$ with respect to $\theta$.  We consider the following assumptions.

\begin{enumerate}[label=($\mathcal{H}_K$\arabic*)]
 \item $w$ is a positive and integrable function in $[\rm,\rM]$. \label{assumption ripley w}
  \item If $\rm =0$, then $c\geq 2$. \label{assumption ripley r=0}
    \item  For $\theta_1 \neq \theta_2$, there exists a set $A\in [\rm,\rM]$ of positive Lebesgue measure such that
\begin{align*}
\int_{ x \in B(0,t) } R_{\theta_1}(x)^2 dx \neq \int_{ x \in B(0,t) } R_{\theta_2}(x)^2 dx, \quad \forall t\in A.
\end{align*} \label{assumption ripley identifiability}
  \item The matrix $\int_\rm^\rM w(t) K(t,\theta_0)^{2c-2} K^{(1)}(t,\theta_0) K^{(1)}(t,\theta_0)^Tdt$ is invertible. \label{assumption ripley inversibilite pour B}
\end{enumerate} 

Assumption \ref{assumption ripley w} is not restrictive. The constraint on $c$ implied by~\ref{assumption ripley r=0} in the case $\rm=0$ tends to confirm the practice, which consists in the choice $\rm>0$. \ref{assumption ripley identifiability} is an identifiability assumption and~\ref{assumption ripley inversibilite pour B} turns out to be the main technical assumption.
Define for all $t\in [\rm,\rM]$,
\begin{align*}
 j_K(t)&:= w(t) K(t,\theta_0)^{2c-2} K^{(1)}(t,\theta_0).
\end{align*}
The following theorem states the strong consistency and the asymptotic normality of the MCE based on $K$ for stationary DPPs. It is proved in Section~\ref{section preuve theoreme ripley}.

\begin{theorem}\label{theorem ripley normalite asymptotic}
Let $\X$ be a DPP with kernel $C_{\rho_0,\theta_0} = \rho_0 R_{\theta_0}$ for a given $\rho_0>0$ and $\theta_0$ an interior point of $\Theta_{\rho_0}$. For all $n\in \N$, let $U_n$ be defined as in~\eqref{expression Un discrepancy measure} with $J=K$ and $\Jn=\Kn$. Assume that \ref{assumption general domaine}-\ref{assumption general differentiabilty} and \ref{assumption ripley w}-\ref{assumption ripley inversibilite pour B} hold. Then, the minimum contrast estimator $\widehat{\theta}_n$ defined by~\eqref{definition thetan} exists and is strongly consistent for $\theta_0$.
Moreover, it satisfies
 \begin{align*}
 \sqrt{\Dn}(\widehat{\theta}_n -\theta_0) \convl \mathcal{N} \left[ 0,B_{\theta_0}^{-1} \Sigma_{\rho_0,\theta_0} \lbrace B_{\theta_0}^{-1}  \rbrace^T \right]
\end{align*}
with
\begin{align}
 B_{\theta_0}:= \int_\rm^\rM w(t) K(t,\theta_0)^{2c-2} K^{(1)}(t,\theta_0) K^{(1)}(t,\theta_0)^T dt
\end{align}
and
\begin{align*}
 \Sigma_{\rho_0,\theta_0} =  \int_\rm^\rM \int_\rm^\rM h_{\rho_0,\theta_0} (t_1,t_2)  j_K(t_1) j_K(t_2)  dt_1 dt_2
\end{align*}
where $h_{\rho_0,\theta_0} $ can be expressed in terms of $C_{\rho_0,\theta_0}$. Specifically, for all $(t_1,t_2) \in [\rm,\rM]^2$,
\begin{align*}
h_{\rho_0,\theta_0} (t_1,t_2)&:=2\int_{\R^d} \1_{\lbrace 0< |x| \leq  t_1\rbrace}\1_{\lbrace 0< |x| \leq  t_2\rbrace} \left( c_{[2]}^{red}(x) + \rho_0^2 \right)dx \\
&+ 4\int_{\R^{2d}} \1_{\lbrace 0< |x| \leq  t_1\rbrace}\1_{\lbrace 0< |y-x| \leq  t_2\rbrace} \left( c_{[3]}^{red}(x,y)+ \rho_0 c_{[2]}^{red}(y) \right) dx dy \\
&+ 4\rho_0  \int_{\R^{2d}} \1_{\lbrace 0< |x| \leq  t_1\rbrace} \1_{\lbrace 0< |y| \leq  t_2\rbrace} \left( 2 c_{[2]}^{red}(y) + \rho_0^2 \right) dx dy \\
&+ \int_{\R^{3d}} \1_{\lbrace 0< |x| \leq  t_1\rbrace}\1_{\lbrace 0< |z-y| \leq  t_2\rbrace} c_{[4]}^{red}(x,y,z) dxdydz \\ 
&+ 4\rho_0 \int_{\R^{3d}} \1_{\lbrace 0< |x| \leq  t_1\rbrace} \1_{\lbrace 0< |z-y| \leq  t_2\rbrace} c_{[3]}^{red} (y,z) dxdydz \\
&+ 2 \int_{\R^{3d}} \1_{\lbrace 0< |x| \leq  t_1 \rbrace} \1_{\lbrace 0< |x+z-y| \leq  t_2 \rbrace} c_{[2]}^{red}(y) c_{[2]}^{red} (z) dx dy dz \\
&+ 4\rho_0^2 \int_{\R^{3d}} \1_{\lbrace 0< |x| \leq  t_1\rbrace}  \1_{\lbrace 0< |z-y| \leq  t_2\rbrace} c_{[2]}^{red}(y) dx dy dz \\ 
&- 4\rho_0 \int_{\R^{2d}} \1_{\lbrace 0< |x| \leq  t_1\rbrace} K(t_2,\theta_0) \left( c_{[3]}^{red}(x,y) + 2\rho_0 c_{[2]}^{red}(y)\right) dx dy \\
&- 8 \rho_0 \int_{\R^d} \1_{\lbrace 0< |x| \leq  t_1\rbrace} K(t_2,\theta_0) \left( c_{[2]}^{red}(x) + \rho_0^2 \right) dx \\
&+ 4\rho_0^2 K(t_1,\theta_0)K(t_2,\theta_0) \left(\rho_0-  \int_{\R^d} C_{\rho_0,\theta_0}(x)^2 dx \right)
 \end{align*}
where $c_{[2]}^{red}, c_{[3]}^{red}$ and $c_{[4]}^{red}$ are given with respect to $C_{\rho_0,\theta_0}$ in \eqref{expression densite cumulant 2}-\eqref{expression densite cumulant 4}.
\end{theorem}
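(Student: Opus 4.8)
The plan is to deduce both assertions from the general minimum contrast theorem proved in the appendix, whose hypotheses fall into two groups, one guaranteeing strong consistency and one guaranteeing asymptotic normality. The task then reduces to verifying these hypotheses for the specific choice $J=K$, $\Jn=\Kn$. Two structural properties of stationary DPPs drive the verification: their ergodicity (established in~\cite{Soshnikov:00}), used for consistency, and their Brillinger mixing property (established in~\cite{bisciobrillingerTCL}), used for normality; the latter simultaneously provides a central limit theorem for polynomial statistics of $\X$ and a summability control on all reduced cumulant densities.

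For consistency, I would first note $\wrho \convPs \rho_0$ by the ergodic theorem. Ergodicity together with the explicit second-order structure of a DPP gives $\Kn(t)\convPs K(t,\theta_0)$ for each fixed $t$, which, by continuity of the limit $t\mapsto K(t,\theta_0)$ on the compact $[\rm,\rM]$, can be strengthened to uniform convergence. It follows that $U_n$ converges uniformly, on a neighbourhood of $\Theta_{\rho_0}$, to the limiting contrast $U(\theta)=\int_\rm^\rM w(t)\{K(t,\theta_0)^c-K(t,\theta)^c\}^2\,dt$, whose unique minimiser is $\theta_0$ thanks to the identifiability assumption~\ref{assumption ripley identifiability} (with~\ref{assumption ripley w} ensuring $w>0$, so that no subinterval is discarded). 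The continuity of $\rho\mapsto\Theta_\rho$ in~\ref{assumption general domaine} lets me replace the random minimisation domain $\Theta_{\wrho}$ by $\Theta_{\rho_0}$, and the standard argmin argument of the general theorem then yields the existence and strong consistency of $\wtheta$.

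For the asymptotic normality, I would expand the estimating equation $U_n^{(1)}(\wtheta)=0$ about $\theta_0$ to obtain $\sqrt{\Dn}(\wtheta-\theta_0)=-[U_n^{(2)}(\ttheta)]^{-1}\sqrt{\Dn}\,U_n^{(1)}(\theta_0)$ for some $\ttheta$ on the segment between $\wtheta$ and $\theta_0$. Linearising $s\mapsto s^c$ around $K(t,\theta_0)$ turns the gradient into
\begin{align*}
\sqrt{\Dn}\,U_n^{(1)}(\theta_0)=-2c^2\int_\rm^\rM j_K(t)\,\sqrt{\Dn}\bigl(\Kn(t)-K(t,\theta_0)\bigr)\,dt+o_{\mathbb{P}}(1),
\end{align*}
so the whole problem rests on a central limit theorem for the process $t\mapsto\sqrt{\Dn}(\Kn(t)-K(t,\theta_0))$. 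Writing $\Kn$ as a ratio with a second-order-statistic numerator and denominator $\wrho^{\,2}$, I would first obtain a joint CLT for $\wrho$ and the numerator from Brillinger mixing, then pass to the ratio by the delta method, concluding that the process converges to a centred Gaussian process with covariance $h_{\rho_0,\theta_0}(t_1,t_2)$. At the same time~\ref{assumption general differentiabilty} (with~\ref{assumption ripley r=0} securing integrability near $0$ when $\rm=0$) gives $U_n^{(2)}(\ttheta)\convP 2c^2B_{\theta_0}$, and~\ref{assumption ripley inversibilite pour B} makes $B_{\theta_0}$ invertible. The factors $2c^2$ cancel, leaving the stated variance $B_{\theta_0}^{-1}\Sigma_{\rho_0,\theta_0}\{B_{\theta_0}^{-1}\}^T$.

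The hard part will be the explicit identification of $h_{\rho_0,\theta_0}$, that is, the evaluation of $\lim_n\Dn\,\mathrm{Cov}(\Kn(t_1),\Kn(t_2))$. This limit must disentangle three coupled sources of randomness: the second-order sum in the numerator, the random normalisation $\wrho^{\,2}$ (which produces the terms carrying $K(t_1,\theta_0)K(t_2,\theta_0)$, in particular the last line of $h_{\rho_0,\theta_0}$, whose factor $\rho_0-\int_{\R^d} C_{\rho_0,\theta_0}(x)^2\,dx$ is the asymptotic variance of $\sqrt{\Dn}\,\wrho$), and the edge-correction sets $D_n^{\circleddash t}$. Expanding the product of the two numerators yields sums over ordered tuples of up to four points of $\X$; grouping them according to the partition of coinciding indices and rewriting each block through the reduced factorial cumulant densities $c_{[2]}^{red}$, $c_{[3]}^{red}$, $c_{[4]}^{red}$ of~\eqref{expression densite cumulant 2}--\eqref{expression densite cumulant 4}, plus the $\rho_0$-weighted products coming from the coarser partitions, is a delicate but essentially mechanical bookkeeping that produces the nine lines of $h_{\rho_0,\theta_0}$. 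Brillinger mixing is precisely what guarantees that the boundary contributions of $D_n^{\circleddash t}$ vanish after normalisation by $\Dn$ and that the finite-dimensional Gaussian limits assemble into a genuine functional CLT.
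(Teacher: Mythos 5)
Your overall strategy coincides with the paper's: reduce the theorem to the general minimum contrast results of the appendix (Theorems~\ref{theoreme consistency} and~\ref{theoreme normalite asymptotique}), handle the random domain $\Theta_{\hat\rho_n}$ via~\ref{assumption general domaine}, use ergodicity (plus monotonicity of $K$) to get uniform strong consistency of $\Kn$ (the paper's Lemma~\ref{lemma conv unif Ripley function}), and use Brillinger mixing for the normality, with the variance identified through the reduced cumulant densities. Your ``joint CLT plus delta method'' treatment of the ratio is essentially the paper's computation in disguise: the paper linearizes $\rho_0^2-\wrho^{\,2}=2\rho_0(\rho_0-\wrho)+o(\rho_0-\wrho)$ by hand, shows the cross term is negligible, and absorbs the intensity correction into the diagonal terms (carrying $\1_{\lbrace x-y=0\rbrace}$) of a single second-order statistic $\sum_{(x,y)\in\X^2}f_{D_n}(x,y)$, to which the Brillinger-mixing CLT (Theorem~\ref{TCL jolivet modifie}) is applied once via the Cram\'er--Wold device; carried out rigorously, your joint CLT for $(\wrho,\text{numerator})$ would be proved by exactly the same device, since a linear combination of a first-order and a second-order statistic is again such a statistic.

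There is, however, one genuine gap: you claim that the finite-dimensional limits ``assemble into a genuine functional CLT'' for the process $t\mapsto\sqrt{\Dn}\,(\Kn(t)-K(t,\theta_0))$, attributing this to Brillinger mixing. Brillinger mixing controls the cumulants of integrated (finite-dimensional) statistics; it does not by itself give tightness of this process in any function space, and no tightness argument is offered, so this step as written is unjustified. It is also unnecessary: only the $\R^p$-valued quantity $\int_{\rm}^{\rM}\left[\Kn(t)-K(t,\theta_0)\right]j_K(t)\,dt$ enters the expansion of $U_n^{(1)}(\theta_0)$, so you should integrate against $j_K$ \emph{first} and prove the CLT for that vector statistic only---after checking $\int_{\rm}^{\rM}|j_K(t)|\,dt<\infty$ (the paper's Lemma~\ref{lemma bornitude}, where assumption~\ref{assumption ripley r=0} matters when $\rm=0$) and computing the variance limit (Lemma~\ref{lemma variance limite ripley}). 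This is precisely what the paper does in Lemma~\ref{lemma ripley TCL}. With that modification (and with the replacement of $\Theta_{\hat\rho_n}$ by a fixed dilation $\Theta_{\rho_0}^{\oplus\epsilon}$ made rigorous through the topological Lemma~\ref{lemma boule incluse dans grossisement}, which you only assert), your argument is sound and matches the paper's.
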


Let us notice that the finiteness of the integrals involved in the last expression follows from the Brillinger mixing property of the DPPs with kernel verifying the condition $\K(\rho_0)$, see \cite{bisciobrillingerTCL}.

\subsection{MCE based on $g$}\label{section theoreme pcf}

We assume in this section that all DPPs of the parametric family are isotropic, which is the usual practice when dealing with the pair correlation function.  In this case, for all $\rho>0$ and $\theta \in \Theta_\rho$, there exists $\widetilde{R}_\theta$ such that $R_\theta(x) = \widetilde{R}_\theta(|x|)$ for all $x\in\R^d$ so  that the pcf of $DPP(C_{\rho,\theta})$ writes
\begin{align}\label{expression pcf noyau DPP}
g(x,\theta)=1 - \widetilde{R}_\theta(|x|)^2 =:\tilde{g}(|x|,\theta)
\end{align}
and does not depend on $\rho$. 
 In the following, to alleviate the notation, we omit the symbol tilde and for all $\theta\in\Theta_\rho$, we consider that the domain of definition of  $R_\theta(.)$ and  $g(.,\theta)$ is $\R^+$. Moreover, by symmetry we extend this domain to $\R$. Denote, for all $d\geq 2$, the  surface area of the $d$-dimensional unit ball,
\begin{align*}
 \sigma_d := \frac{2\pi^{d/2}}{\Gamma\left(d/2 \right)} .
\end{align*}
For $n\in\N$ and $t>0$, we consider the kernel estimator of $g$, see~\cite[Section 4.3.5]{mollerstatisticalinference},
\begin{align}\label{formula estimator pcf}
  \widehat{g_n}(t) := \frac{1}{\sigma_d t^{d-1} \wrho{}\hspace*{-1.5mm}^2} \sum_{ (x,y) \in \X^2  }^{\neq} \1_{\left\lbrace x\in D_n,\ y \in D_n\right\rbrace}   \ \frac{1}{b_n |D_n\cap D_n^{ x-y}|} k\left(\frac{t-|x-y|}{b_n}\right)
\end{align}
where  for any $z\in\R^d$ $D_n^{z} : = \left\lbrace u , u+z \in D_n \right\rbrace$, $\wrho$ is as in~\eqref{estimation rho}  and  $b_n$ and $k$ are the bandwidth and the kernel to be chosen according to the assumptions below. 
For all $t\in[\rm,\rM]$, denote by $g^{(1)}(t,\theta)$ and $g^{(2)}(t,\theta)$ the gradient and the Hessian matrix  of $g$ with respect to $\theta$. We consider the assumptions:

\begin{enumerate}[label=($\mathcal{H}_g$\arabic*)]
  \item $\rm>0$. \label{assumption pcf rm}
  
      \item $w$ is a positive and continuous function on $[\rm,\rM]$. \label{assumption pcf w}
      
 \item The kernel $k$ is positive, symmetric and bounded with compact support included in $[-T,T]$ for a given $T>0$. Further, $\int_\R k(x) dx =  1$. \label{assumption pcf k} 
 \item $\{b_n\}_{ n\in\N }$ is a positive sequence, $b_n\rightarrow 0$, $b_n \Dn \rightarrow +\infty $ and $b_n^4 \Dn \rightarrow 0$. \label{assumption pcf bn} 
 \item There exists $\epsilon>0$ such that for all $\theta \in \Theta_{\rho_0}^{ \oplus \epsilon}$, $R_\theta(.)$ is of class $\mathcal{C}^2$ on $\R \setminus \lbrace 0\rbrace$. \label{assumption pcf differentiabilty}
\item  For $\theta_1 \neq \theta_2$, there exists a set $A\in [\rm,\rM]$ of positive Lebesgue measure such that
\begin{align*}
\left| R_{\theta_1}(t)\right| \neq  \left| R_{\theta_2}(t) \right|, \quad \forall t\in A.
\end{align*} \label{assumption pcf identifiability}
 \item The matrix $\int_\rm^\rM w(t) g(t,\theta_0)^{2c-2} g^{(1)}(t,\theta_0) g^{(1)}(t,\theta_0)^T dt$ is invertible. \label{assumption pcf B inversible}
\end{enumerate}
The first four assumptions are easy to satisfy by appropriate choices of $\rm$, $w$, $b_n$ and $k$.  \ref{assumption pcf differentiabilty} is not restrictive and is satisfied by all parametric families considered  in~\cite{lavancierpublish} and \cite{bisciobernoulli}. \ref{assumption pcf identifiability} is an identifiability assumption and as in the previous section, the main technical assumption is in fact~\ref{assumption pcf B inversible}.
The proof of the following theorem is postponed to Section~\ref{section preuve theoreme pcf}. Put
\begin{align*}
j_g(t)&:= w(t) g(t,\theta_0)^{2c-2} g^{(1)}(t,\theta_0), \quad t\in [\rm,\rM].
\end{align*}
\begin{theorem}\label{theorem pcf normalite asymptotique}
Let $\X$ be an isotropic DPP with kernel $C_{\rho_0,\theta_0} = \rho_0 R_{\theta_0}$ for a given $\rho_0>0$ and $\theta_0$ an interior point of $\Theta_{\rho_0}$. For all $n\in \N$, let $U_n$ be defined as in~\eqref{expression Un discrepancy measure} with $J=g$ and $\Jn=\gn$. Assume that \ref{assumption general domaine}-\ref{assumption general differentiabilty} and \ref{assumption pcf rm}-\ref{assumption pcf B inversible} hold. Assume further that for all $\theta\in\Theta_{\rho_0}$, $R_\theta(.)$ is isotropic. Then, the minimum contrast estimator $\widehat{\theta}_n$ defined by~\eqref{definition thetan} exists and is consistent for $\theta_0$.
Moreover, it satisfies
 \begin{align*}
 \sqrt{\Dn}(\widehat{\theta}_n -\theta_0) \convl \mathcal{N} \left[ 0,B_{\theta_0}^{-1} \Sigma_{\rho_0,\theta_0} \lbrace B_{\theta_0}^{-1}  \rbrace^T \right]
\end{align*}
with
\begin{align*}
 B_{\theta_0}:= \int_\rm^\rM w(t) g(t,\theta_0)^{2c-2} g^{(1)}(t,\theta_0) g^{(1)}(t,\theta_0)^T dt
\end{align*}
and
 \begin{align*} 
  \Sigma_{\rho_0,\theta_0} &= 2\int_{\R^d} \1_{\lbrace \rm \leq |x| \leq \rM \rbrace } \frac{j_g(|x|) j_g(|x|)}{\sigma_d^2 |x|^{2(d-1)}} \left( c_{[2]}^{red} (x) + \rho_0^2 \right) dx \\
         &+ 4\int_{\R^{2d}} \1_{\lbrace \rm \leq |x|, |y-x|  \leq \rM \rbrace } \frac{ j_g(|x|) j_g(|y-x|)}{\sigma_d^2 |x|^{d-1}|y-x|^{d-1}} \left( c_{[3]}^{red}(x,y) + \rho_0 c_{[2]}^{red}(y)  \right) dxdy\\
         &+4\rho_0 \int_{\R^{2d}}  \1_{\lbrace \rm \leq |x|,|y| \leq \rM \rbrace }\frac{ j_g(|x|)  j_g(|y|)}{\sigma_d^2 |x|^{d-1}|y|^{d-1}} \left( 2c_{[2]}^{red}(x) + \rho_0^2 \right)  dx dy \\
         &+  \int_{\R^{3d}}  \1_{\lbrace \rm \leq |x|,|z-y|  \leq \rM \rbrace } \frac{ j_g(|x|) j_g(|z-y|)}{\sigma_d^2 |x|^{d-1}|z-y|^{d-1}} c_{[4]}^{red} (x,y,z) \ dx dy dz \\
         &+ 4\rho_0 \int_{\R^{3d}}  \1_{\lbrace \rm \leq |x|,|z-y|  \leq \rM \rbrace } \frac{ j_g(|x|) j_g(|z-y|)}{\sigma_d^2 |x|^{d-1}|z-y|^{d-1}}  c_{[3]}^{red} ( y,z)\   dx dy dz  \\
         &+ 2\int_{\R^{3d}}  \1_{\lbrace \rm \leq |x|,  |z-y+x|  \leq \rM \rbrace } \frac{ j_g(|x|) j_g(|z-y+x|)}{\sigma_d^2 |x|^{d-1}|z-y+x|^{d-1}} c_{[2]}^{red} (y) c_{[2]}^{red} (z)  \ dx dy dz  \\
         &+ 4\rho_0^2 \int_{\R^{3d}}  \1_{\lbrace \rm \leq |x|,  |z-y|  \leq \rM \rbrace } \frac{ j_g(|x|) j_g(|z-y|)}{\sigma_d^2 |x|^{d-1}|z-y|^{d-1}}  c_{[2]}^{red} (y)  \ dx dy dz \\ 
         &- 4\rho_0 \left( \int_{\rm}^{\rM}g(t,\theta_0)  j_g(t) dt \right) \\
         & \hspace*{2.85cm}  \int_{\R^{2d}} \1_{\lbrace \rm \leq |x| \leq \rM \rbrace } \frac{j_g(|x|)}{\sigma_d |x|^{d-1}}  \left(  c_{[3]}^{red}(x,y) + 2\rho_0 c_{[2]}^{red}(y)  \right) dxdy \\ 
                  &-  8\rho_0 \left( \int_{\rm}^{\rM}g(t,\theta_0)  j_g(t) dt \right) \int_{\R^{d}} \1_{\lbrace \rm \leq |x| \leq \rM \rbrace }\frac{j_g(|x|)}{\sigma_d |x|^{d-1}} \left( c_{[2]}^{red}(x) +\rho_0^2 \right) dx \\
         &+ 4\rho_0^2 \left( \int_{\rm}^{\rM} g(t,\theta_0) j_g(t)dt \right)^2 \left( \rho_0 -\int_{\R^d} C_{\rho_0,\theta_0}(x)^2 dx \right) \end{align*}
where $c_{[2]}^{red}, c_{[3]}^{red}$ and $c_{[4]}^{red}$ are given  in \eqref{expression densite cumulant 2}-\eqref{expression densite cumulant 4}.
\end{theorem}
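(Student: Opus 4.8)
The plan is to deduce both assertions from the general minimum contrast theorem of the appendix, verifying its hypotheses one by one for the choice $J=g$, $\Jn=\gn$. The key structural observation is that the only random object entering $U_n$ is the kernel estimator $\gn$, so the whole statement reduces to controlling $\gn$ together with the smooth contrast integrand. I would first establish the consistency of $\gn$: starting from \eqref{formula estimator pcf}, the consistency of $\wrho$ and the ergodicity of stationary DPPs give $\gn(t)\to g(t,\theta_0)$ for $t\in[\rm,\rM]$, the bandwidth conditions in \ref{assumption pcf bn} ensuring that both the smoothing bias and the variance of $\gn$ vanish. Combined with the identifiability assumption \ref{assumption pcf identifiability}, which forces the limiting contrast $U(\theta)=\int_{\rm}^{\rM}w(t)\{g(t,\theta_0)^c-g(t,\theta)^c\}^2\,dt$ to be uniquely minimised at $\theta_0$, and with \ref{assumption general domaine} to handle that the minimisation \eqref{definition thetan} runs over the random set $\Theta_{\hat\rho_n}$, this yields existence and consistency of $\wtheta$.

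For the asymptotic normality I would use the classical Taylor expansion of the score. Writing $U_n^{(1)}$ and $U_n^{(2)}$ for the gradient and Hessian of $U_n$ and expanding $0=U_n^{(1)}(\wtheta)$ about $\theta_0$, consistency gives $\sqrt{\Dn}(\wtheta-\theta_0)=-\{U_n^{(2)}(\ttheta)\}^{-1}\sqrt{\Dn}\,U_n^{(1)}(\theta_0)$ for some $\ttheta$ between $\wtheta$ and $\theta_0$. The regularity assumptions \ref{assumption general differentiabilty} and \ref{assumption pcf differentiabilty} permit differentiation under the integral sign and, since $\gn(t)\to g(t,\theta_0)$, give $U_n^{(2)}(\ttheta)\to 2c^2B_{\theta_0}$, invertible by \ref{assumption pcf B inversible}. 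A first-order expansion of $u\mapsto u^c$ reduces the score to
\[ \sqrt{\Dn}\,U_n^{(1)}(\theta_0)=-2c^2\sqrt{\Dn}\int_{\rm}^{\rM}j_g(t)\,\bigl(\gn(t)-g(t,\theta_0)\bigr)\,dt+o_{\mathbb P}(1), \]
so that $\sqrt{\Dn}(\wtheta-\theta_0)=B_{\theta_0}^{-1}\sqrt{\Dn}\int_{\rm}^{\rM}j_g(t)(\gn(t)-g(t,\theta_0))\,dt+o_{\mathbb P}(1)$, and everything hinges on a central limit theorem for this single linear functional of $\gn$.

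The crux of the argument, and the step I expect to be hardest, is this central limit theorem. Substituting \eqref{formula estimator pcf} and integrating in $t$ first, the kernel with shrinking bandwidth acts as an approximate identity, so the functional equals, up to a remainder I must prove negligible, the second-order statistic $\wrho^{-2}\sum_{(x,y)\in\X^2}^{\neq}\1_{\lbrace x,y\in D_n\rbrace}\,j_g(|x-y|)/\bigl(\sigma_d|x-y|^{d-1}|D_n\cap D_n^{x-y}|\bigr)$. This un-smoothing is precisely what lets the $g$-based estimator attain the $\sqrt{\Dn}$ rate despite the slow pointwise rate of $\gn$: controlling the smoothing remainder is where $b_n^4\Dn\to0$ kills the bias while $b_n\Dn\to+\infty$ controls the variance, both from \ref{assumption pcf bn}. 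Asymptotic normality of the resulting sum over pairs, after replacing $\wrho$ by $\rho_0$ and handling the edge correction via the regular-set assumption, then follows from the central limit theorem for Brillinger-mixing point processes of \cite{bisciobrillingerTCL}; the Brillinger mixing of $DPP(C_{\rho_0,\theta_0})$ simultaneously guarantees finiteness of every integral appearing in $\Sigma_{\rho_0,\theta_0}$.

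Finally I would identify the limiting variance. Two sources of randomness contribute: the pair sum itself and the plug-in $\wrho$ entering through the factor $\wrho^{-2}$ in \eqref{formula estimator pcf}. Linearising $\wrho^{-2}$ about $\rho_0^{-2}$ couples the pair sum with $\wrho-\rho_0$, and expanding the variance of the resulting statistic through the factorial cumulant measures of $\X$ up to order four produces the full list of terms in $\Sigma_{\rho_0,\theta_0}$: the integrals against $c_{[2]}^{red}$, $c_{[3]}^{red}$, $c_{[4]}^{red}$ come from the cumulants of the pair sum, the cross terms carrying the factor $\int_{\rm}^{\rM}g(t,\theta_0)j_g(t)\,dt$ come from its covariance with $\wrho$, and the last term, proportional to $\rho_0-\int_{\R^d}C_{\rho_0,\theta_0}(x)^2\,dx$, is the asymptotic variance of $\sqrt{\Dn}(\wrho-\rho_0)$. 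Substituting \eqref{expression densite cumulant 2}–\eqref{expression densite cumulant 4} yields the stated closed form; this last identification is essentially bookkeeping once the central limit theorem is established.
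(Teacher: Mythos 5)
Your proposal is correct in substance and shares the paper's overall architecture: consistency and normality are deduced from the general minimum contrast theorems of the appendix (Theorems~\ref{theoreme consistency}--\ref{theoreme normalite asymptotique}), the random domain $\Theta_{\hat\rho_n}$ is handled through \ref{assumption general domaine} (the paper makes this precise via Lemmas~\ref{lemma point dehors cpct}--\ref{lemma boule incluse dans grossisement}, showing $P(\ttheta=\wtheta)\to 1$), the score is reduced to the linear functional $\sqrt{\Dn}\int_{\rm}^{\rM}j_g(t)\bigl(\gn(t)-g(t,\theta_0)\bigr)dt$, the CLT comes from the Brillinger-mixing theorem of~\cite{bisciobrillingerTCL} (Theorem~\ref{TCL jolivet modifie}) via Cram\'er--Wold, and your description of the variance identification (cumulant terms from the pair sum, cross terms with factor $\int g\,j_g$ from the coupling with $\wrho$, final term $\rho_0-\int C^2$ from $\mathrm{Var}(\wrho)$) matches Lemmas~\ref{lemma variance limite pcf} and~\ref{lemma pcf TCL} exactly. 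The genuine divergence is at the CLT step. You propose to \emph{un-smooth} first: replace the functional by the unsmoothed pair statistic with weights $j_g(|x-y|)/\bigl(\sigma_d|x-y|^{d-1}|D_n\cap D_n^{x-y}|\bigr)$ and show the remainder negligible. The paper never un-smooths: it applies the CLT directly to the kernel-smoothed second-order statistic (the term $W_n$ in Lemma~\ref{lemma pcf TCL}), the key observation being Lemma~\ref{lemma majoration finale pour MCE g} --- integrating $b_n^{-1}k\bigl((t-|x-y|)/b_n\bigr)$ over $t$ absorbs the factor $1/b_n$, so the functions $f_{D_n}$ admit a domination \emph{uniform in $n$} of the form \eqref{majoration ft cumulant}; the smoothing bias then enters only through the separate term $V_n=\sqrt{\Dn}\int\bigl[\E(\wrho^{\,2}\gn)-\rho_0^2g\bigr]j_g\,dt=O(\sqrt{\Dn}\,b_n^2)\to 0$. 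Your route can be made rigorous, but the remainder control is more delicate than your sketch suggests: on the shells $\bigl\{\,\bigl||x-y|-\rm\bigr|\le Tb_n\bigr\}$ and $\bigl\{\,\bigl||x-y|-\rM\bigr|\le Tb_n\bigr\}$ the approximate-identity error is $O(1)$ per pair, and a crude bound on their total contribution is of order $\sqrt{\Dn}\,b_n$, which does \emph{not} vanish under \ref{assumption pcf bn} (only $b_n=o(\Dn^{-1/4})$ is assumed). You need the symmetry of $k$ from \ref{assumption pcf k} together with the $C^2$-smoothness in $t$ of $j_g(t)$, $g(t,\theta_0)$ and $t^{d-1}$ (hence $\rm>0$ and \ref{assumption pcf differentiabilty}) to obtain an $O(b_n^2)$ cancellation in the mean, plus a second-moment computation showing that the fluctuation of the smoothed-minus-unsmoothed difference has variance $O(b_n)\to 0$; also note that $b_n\Dn\to\infty$ is really consumed by the uniform consistency of $\gn$ (Lemma~\ref{lemma conv univ pcf}), not by this remainder. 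In short, the paper's domination trick buys the avoidance of this boundary-layer analysis, while your un-smoothing buys a slightly more transparent reading of $\Sigma_{\rho_0,\theta_0}$, which is literally the limiting variance formula of the unsmoothed statistic.
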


\section{Proofs}\label{proofs}

\subsection{Proof of Theorem~\ref{theorem ripley normalite asymptotic}}\label{section preuve theoreme ripley}

Since $C_{\rho_0,\theta_0}$ verifies $\K(\rho_0)$, $\wrho$ converges almost surely to  $\rho_0$, so by~\ref{assumption general domaine}, for all $\epsilon>0$, there exists $N\in \N$ such that for all $n\geq N$,  $\Theta_{\hat \rho _n} \subset \Theta_{\rho_0}^{ \oplus \epsilon}$ almost surely. Henceforth, without loss of generality, we let $\epsilon >0$ and assume that $\Theta_{\hat \rho _n} \subset \Theta_{\rho_0}^{ \oplus \epsilon}$ for all $n\in \N$. 
We apply below the general Theorems~\ref{theoreme consistency}-\ref{theoreme normalite asymptotique} of the appendix to prove that the estimator  $\ttheta$ defined in~\eqref{definition thetatilde}  with $\Theta =\Theta_{\rho_0}^{ \oplus \epsilon}$, $J=K$ and  $\Jn=\Kn$ is consistent and asymptotically normal. 
 As a consequence, almost surely, there exist $r>0$ such that $B(\theta_0,r) \subset \Theta_{\rho_0}$ and $N_r \in \N$ such that for all $n\geq N_r$,  $\ttheta \in B(\theta_0,r)$. From Lemma~\ref{lemma boule incluse dans grossisement} in the appendix and \ref{assumption general domaine}, we deduce that   for $n$ sufficiently large, $B(\theta_0,r) \subset \Theta_{\hat \rho _n}$. Hence, almost surely, for $n$ large enough, the minimum of $U_n$ is attained in $\Theta_{\hat \rho _n} \subset \Theta_{\rho_0}^{ \oplus \epsilon}$ so that  $\ttheta$ in \eqref{definition thetatilde} and $\wtheta$ in \eqref{definition thetan} coincide.
 
 Let us now prove the strong consistency and asymptotic normality of $\ttheta$ in \eqref{definition thetatilde} when $\Theta = \Theta_{\rho_0}^{ \oplus \epsilon}$,  $J=K$ and  $\Jn=\Kn$. To that end, we verify all the assumptions of Theorems~\ref{theoreme consistency}-\ref{theoreme normalite asymptotique}. 
The general setting in Section~\ref{setting},  Assumptions~\ref{assumption general domaine} and \ref{assumption ripley w} imply directly \ref{assumption Theta compact et Dn regular}-\ref{assumption w}.
  For all $\theta \in \Theta$, we have
\begin{align}\label{formula ripley DPP}
 K(t,\theta) = \sigma_d t^d - \int_{ x \in B(0,t) } R_\theta(x)^2 dx
\end{align}
where $\F(R_\theta) \geq 0$ by~\ref{assumption general Krho}. Further, by \cite[Corollary 1.4.13]{sasvari2013multivariate}, for all $\theta \in \Theta$, if for a given $x\neq 0$, $|R_\theta(x)| =1$, then $R_\theta$ is invariant by translation of $x$. Since for all $\theta \in \Theta$, $R_\theta( . ) \in L^2(\R^d)$, this is impossible so, for all $x\neq 0$ and $\theta \in \Theta$, $|R_\theta(x)| <1$. Hence, by~\eqref{formula ripley DPP}, $K(t,\theta)>0$ on $ (\rm,\rM]\times \Theta$ and $K(.,.)$ is continuous on $ [\rm,\rM]\times \Theta$.
Consequently,  $K(.,.)^c$ is continuous for all $c\in \R$ if $\rm>0$ and for all $c>0$ if $\rm=0$. Therefore, under~\ref{assumption general domaine}-\ref{assumption general differentiabilty}  and \ref{assumption ripley r=0}, \ref{assumption continuite de J et param c} holds. By the same arguments, $K(.,.)^{c-2}$ and $K(.,.)^{2c-2}$ are continuous for all $c\in \R$ if $\rm>0$ and for all $c\geq 2$ if $\rm =0$. Thus
\ref{assumption integrability pour B} holds. For all $t\in[\rm,\rM]$, $\Kn(t)$ is bounded by $\Kn(\rM)$ and it follows from the ergodic theorem that $\Kn(\rM)$ is almost surely finite as soon as $n$ and so $D_n$ is large enough. 
 Moreover, by Lemma~\ref{lemma conv unif Ripley function}, $\Kn(t)$ is almost surely strictly positive for $t>0$ and $n$ large enough. 
Hence, under~\ref{assumption general domaine}-\ref{assumption general differentiabilty} and~\ref{assumption ripley r=0}, \ref{assumption Jn borne positive} holds.
We have for all $\theta\in \Theta$ and $t\in(0,\rM)$
 \begin{align*}
  K^{(1)}(t,\theta)= -\frac{\partial}{\partial\theta} \int_{x\in B(0,t)}  R_\theta(x)^2 dx.
 \end{align*}
By \ref{assumption general differentiabilty}, the function $(x,\theta) \mapsto R^{(1)}_\theta(x)$ is continuous with respect to $\theta$ and bounded for all $x\in B(0,\rM)$ and $\theta\in\Theta$. Thus 
by the dominated convergence theorem,
\begin{align}\label{expression J1 ripley}
   K^{(1)}(t,\theta)= - 2\int_{x\in B(0,t)} R_\theta(x)  R^{(1)}_\theta(x) dx.
\end{align}
We obtain similarly
\begin{align*}
   K^{(2)}(t,\theta)= -2 \int_{x\in B(0,t)} \left(  R^{(1)}_\theta(x)R^{(1)}_\theta(x)^T +R^{(2)}_\theta(x) R_\theta(x)   \right)  dx.
\end{align*}
 By \ref{assumption general differentiabilty}, the terms inside the integral in the last equation are bounded uniformly with respect to $(x,\theta) \in B(0,\rM) \times \Theta$. Therefore, $K^{(1)}(t,\theta)$ and $K^{(2)}(t,\theta)$ are continuous with respect to $\theta$ and uniformly bounded with respect to $t\in [\rm,\rM]$ and $\theta\in \Theta$ so \ref{assumption J derivable 2} holds. 
Assumptions \ref{assumption identifiability} and  \ref{assumption B inversible} are directly implied by \ref{assumption ripley identifiability} and \ref{assumption ripley inversibilite pour B}, respectively.
 The assumption $(\mathcal{A}5)'$ is proved by Lemma~\ref{lemma conv unif Ripley function} below,
while Lemmas~\ref{lemma bornitude}-\ref{lemma variance limite ripley} are preliminary results for Lemma~\ref{lemma ripley TCL}  which proves the remaining assumption $(\mathcal{TCL})$.

\begin{lemma}\label{lemma conv unif Ripley function}
Let $K$ be the Ripley's $K$-function of a DPP with kernel $C$ verifying $\K(\rho_0)$ and $\Kn$ the estimator
given by~\eqref{formula estimator ripley}. Then, for all $\rM > \rm \geq 0$, 
 \begin{align*}
 \sup_{t\in  [\rm,\rM]} \left| \Kn(t) - K(t) \right| \convPs 0,
 \end{align*}
\end{lemma}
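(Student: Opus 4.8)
The plan is to reduce $\Kn$ to a genuinely monotone statistic up to a uniformly negligible remainder, establish the pointwise almost sure limit through the ergodic theorem, and then upgrade it to a uniform limit by a Glivenko--Cantelli (P\'olya) argument. Throughout, $\sum^{\neq}$ abbreviates the sum over ordered pairs of distinct points of $\X$. First, since $C$ verifies $\K(\rho_0)$ the DPP is ergodic and $\wrho\convPs\rho_0$; as the factor $\rho_0^2/\wrho^2$ is independent of $t$ and tends a.s.\ to $1$, it suffices to prove the uniform a.s.\ convergence towards $K$ of $N_n(t)/(\rho_0^2|D_n^{\circleddash t}|)$, where $N_n(t):=\sum^{\neq}\1_{\{x\in D_n\}}\1_{\{y\in D_n^{\circleddash t}\}}\1_{\{|x-y|\le t\}}$.

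Second---and this is the delicate point---I would dispose of the two $t$-dependent occurrences of the erosion $D_n^{\circleddash t}$, which prevent $N_n$ from being monotone. Introduce the uneroded, hence nondecreasing, count $\widetilde N_n(t):=\sum^{\neq}\1_{\{x\in D_n\}}\1_{\{y\in D_n\}}\1_{\{|x-y|\le t\}}$ and the monotone statistic $\widehat L_n(t):=\widetilde N_n(t)/(\rho_0^2\Dn)$. Two facts control the discrepancy. (a) By convexity and regularity of $D_n$, the inner parallel body satisfies $\Dn-|D_n^{\circleddash t}|\le t\,\mathcal H_{d-1}(\partial D_n)\le \alpha_2\rM\,n^{d-1}$ uniformly for $t\in[0,\rM]$, whence $\sup_{t\le\rM}\big(1-|D_n^{\circleddash t}|/\Dn\big)\le \alpha_2\rM/(\alpha_1 n)\to 0$. (b) Since $D_n^{\circleddash\rM}\subset D_n^{\circleddash t}$ and $\1_{\{|x-y|\le t\}}\le\1_{\{|x-y|\le\rM\}}$ for $t\le\rM$, one has the $t$-free sandwich $0\le \widetilde N_n(t)-N_n(t)\le \widetilde N_n(\rM)-N_n(\rM)=:B_n$. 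Writing $B_n/\Dn=\widetilde N_n(\rM)/\Dn-(|D_n^{\circleddash\rM}|/\Dn)\,N_n(\rM)/|D_n^{\circleddash\rM}|$ and invoking the pointwise limit of the next step together with (a), $B_n/\Dn\convPs 0$. Combining (a), (b) and the a.s.\ boundedness of $\widetilde N_n(\rM)/\Dn$ yields $\sup_{t\in[\rm,\rM]}\big|\Kn(t)-\widehat L_n(t)\big|\convPs 0$.

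Third, for each fixed $t$ the multidimensional (spatial) ergodic theorem applied to the reduced second factorial moment measure of the ergodic DPP---finite on bounded sets because $\rho^{(2)}=\rho_0^2 g$ is bounded---gives $\widetilde N_n(t)/\Dn\convPs\rho_0^2\int_{B(0,t)}g=\rho_0^2 K(t)$, i.e.\ $\widehat L_n(t)\convPs K(t)$ (and likewise $N_n(\rM)/|D_n^{\circleddash\rM}|\convPs\rho_0^2K(\rM)$, which closes the argument of (b)). Finally, the limit $K(t)=\int_{B(0,t)}(1-R^2)$ is continuous and, because $g=1-R^2\ge 0$, nondecreasing; since $\widehat L_n$ is also nondecreasing, the classical P\'olya argument---sandwiching $\widehat L_n(t)$ between its values at the nodes of a fine partition of $[\rm,\rM]$ and using the uniform continuity of $K$---turns the pointwise limit into $\sup_{t\in[\rm,\rM]}|\widehat L_n(t)-K(t)|\convPs 0$. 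Adding the estimate of the second step gives the lemma.

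I expect the main obstacle to lie in the second step: the appearance of $D_n^{\circleddash t}$ both in the normalisation and in the indicator $\1_{\{y\in D_n^{\circleddash t}\}}$ destroys the exact monotonicity in $t$, and the real work is to show that the resulting boundary pair counts are almost surely $o(\Dn)$ \emph{uniformly} in $t$ rather than merely in expectation; the monotone sandwich by the single count $B_n$, combined with the convex-geometry volume bound (a), is what renders this uniform control tractable.
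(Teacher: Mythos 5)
Your proof is correct, and it takes a genuinely different --- much more self-contained --- route than the paper's. The paper's proof is only a few lines: ergodicity of the DPP (\cite{Soshnikov:00}, Theorem 7) gives $\wrho\convPs\rho_0$, the uniform almost sure convergence $\sup_{t\in[\rm,\rM]}\big|\wrho^{\,2}\Kn(t)-\rho_0^2K(t)\big|\convPs 0$ is not proved but cited from \cite[Section 4.2.2]{heinrichasymptotic:13}, and the random normalisation $\wrho^{\,2}$ is then removed by the triangle inequality $\wrho^{\,2}\sup_{t}|\Kn(t)-K(t)|\le \sup_{t}\big|\wrho^{\,2}\Kn(t)-\rho_0^2K(t)\big|+K(\rM)\,\big|\wrho^{\,2}-\rho_0^2\big|$, which uses that $K$ is increasing, hence bounded by $K(\rM)$ on $[\rm,\rM]$. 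You perform the same elementary normalisation step (in the reverse direction, factoring out $\rho_0^2/\wrho^{\,2}$), but you then actually prove the uniform convergence that the paper only cites: pointwise limits via the spatial (Nguyen--Zessin type) ergodic theorem, uniform control of the erosion through the convexity bound $\Dn-|D_n^{\circleddash t}|\le t\,\mathcal{H}_{d-1}(\partial D_n)$ combined with your $t$-free monotone sandwich, and the P\'olya argument to upgrade pointwise to uniform convergence using monotonicity of the uneroded count and continuity of $K$. This is in substance a reconstruction of the content of the cited result; what it buys is independence from the external reference and an explicit display of where the convexity and regularity of $\{D_n\}$ (Definition 3.1) and the monotonicity of $K$ enter, at the price of length. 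Two pieces of bookkeeping are left implicit in your write-up, neither of which is a genuine gap: (i) the ergodic theorem applies most directly to counts of the form $\sum^{\neq}\1_{\lbrace y\in W_n\rbrace}\1_{\lbrace |x-y|\le t\rbrace}$ in which the non-centre point is unrestricted, so identifying the limit of your $\widetilde N_n(t)$, where both points are confined to $D_n$, requires one further boundary-pair estimate of exactly the type your sandwich already provides (alternatively, squeeze $\widetilde N_n(t)$ between the eroded count and this unrestricted count); (ii) the windows $D_n$ and $D_n^{\circleddash \rM}$ are legitimate convex averaging sequences because the regularity of Definition 3.1 forces their inradii to grow linearly in $n$.
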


\begin{proof}
Since a stationary DPP is ergodic by~\cite[Theorem 7]{Soshnikov:00}, we have
\begin{align}\label{convergence ps intensite}
 \wrho \convPs \rho_0
\end{align}
and
\begin{align}\label{convergence ps ripley}
 \sup_{ t \in [\rm,\rM]} \left| \wrho^{\, 2} \Kn(t) - \rho_0^2 K(t) \right| \convPs 0,
\end{align}
see for instance \cite[Section 4.2.2]{heinrichasymptotic:13}. Further, as $K$ is an increasing function, we have
\begin{multline*}
\wrho^{\, 2} \sup_{ t \in [\rm,\rM]} \left|  \Kn(t) -  K(t) \right| \\ \leq  \sup_{ t \in [\rm,\rM]} \left| \wrho^{\, 2} \Kn(t) - \rho_0^2 K(t) \right| +  K(\rM)\sup_{ t \in [\rm,\rM]} \left| \wrho^{\, 2}  - \rho_0^2\right|.
\end{multline*}
Hence, by \eqref{convergence ps intensite}-\eqref{convergence ps ripley} and the last equation, we have the convergence 
\begin{align*}
  \sup_{ t \in [\rm,\rM]} \left|  \Kn(t) -  K(t) \right| \convPs 0.
\end{align*}
\end{proof}

  \begin{lemma}\label{lemma bornitude}
   If \ref{assumption ripley w}-\ref{assumption ripley r=0} and \ref{assumption general differentiabilty} hold, then for all $\rM > \rm \geq 0$,
   \begin{align*}
     \int_\rm^\rM \left|j_K(t) \right| dt < +\infty.
   \end{align*}
 \end{lemma}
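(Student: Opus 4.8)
The plan is to bound the integrand $|j_K(t)| = w(t)\,K(t,\theta_0)^{2c-2}\,\big|K^{(1)}(t,\theta_0)\big|$ by a constant multiple of $w(t)$ on $[\rm,\rM]$ and then conclude from the integrability of $w$ supplied by \ref{assumption ripley w}. The two factors besides $w$ are controlled separately: the gradient factor by the derivative bounds in \ref{assumption general differentiabilty}, and the power factor $K(t,\theta_0)^{2c-2}$ by a case analysis driven by \ref{assumption ripley r=0}.

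First I would control the gradient factor. Starting from the expression \eqref{expression J1 ripley}, namely $K^{(1)}(t,\theta_0) = -2\int_{B(0,t)} R_{\theta_0}(x)\,R^{(1)}_{\theta_0}(x)\,dx$, I would use that $R_{\theta_0}$ is a correlation function, so $|R_{\theta_0}(x)|\leq R_{\theta_0}(0)=1$, together with the uniform bound $\big|R^{(1)}_{\theta_0}(x)\big|\leq M$ on $B(0,\rM)$ granted by \ref{assumption general differentiabilty}. This yields $\big|K^{(1)}(t,\theta_0)\big|\leq 2M\,|B(0,t)|\leq 2M\,|B(0,\rM)|$ for all $t\in[\rm,\rM]$, so the gradient factor is uniformly bounded on the interval.

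Next I would control the power factor $K(t,\theta_0)^{2c-2}$, splitting into two cases. If $\rm>0$, then as established just above in the proof of Theorem~\ref{theorem ripley normalite asymptotic}, $K(\cdot,\theta_0)$ is continuous and strictly positive on the compact interval $[\rm,\rM]$, hence bounded above and below by positive constants; therefore $K(\cdot,\theta_0)^{2c-2}$ is bounded for every $c\in\R$. If $\rm=0$, then \ref{assumption ripley r=0} forces $c\geq 2$, so $2c-2\geq 2>0$; since $K(\cdot,\theta_0)$ is continuous on $[0,\rM]$ with $K(0,\theta_0)=0$, the map $t\mapsto K(t,\theta_0)^{2c-2}$ extends continuously to $t=0$ with value $0$ and is thus bounded on $[0,\rM]$.

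Combining the two bounds gives $|j_K(t)|\leq C\,w(t)$ on $[\rm,\rM]$ for a finite constant $C$, and the conclusion follows from $\int_{\rm}^{\rM} w(t)\,dt<\infty$, which is \ref{assumption ripley w}. The only delicate point is the behaviour of the power factor at the origin when $\rm=0$, where a negative exponent $2c-2$ would produce a non-integrable singularity; this is precisely what the constraint $c\geq 2$ in \ref{assumption ripley r=0} rules out, and it is the reason that assumption is imposed.
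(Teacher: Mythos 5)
Your proposal is correct and follows essentially the same route as the paper: both start from the expression \eqref{expression J1 ripley} for $K^{(1)}(t,\theta_0)$, bound that factor uniformly on $[\rm,\rM]$ via \ref{assumption general differentiabilty}, use the strict positivity and continuity of $K(\cdot,\theta_0)$ (established after \eqref{formula ripley DPP}) to handle $K(t,\theta_0)^{2c-2}$ when $\rm>0$, and invoke $c\geq 2$ from \ref{assumption ripley r=0} to kill the potential singularity at $t=0$ when $\rm=0$, concluding by integrability of $w$. Your write-up is in fact somewhat more explicit than the paper's (which leaves the $\rm=0$ case as a one-line remark), but the decomposition and case analysis are identical.
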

 
\begin{proof}

By  \eqref{expression J1 ripley}, we have
 \begin{align}\label{integrabilite ?}
  \int_\rm^\rM \left|j_K(t) \right| dt = 2\int_\rm^\rM \left|w(t)   K(t,\theta_0)^{2c-2} \int_{x\in B(0,t)} R_{\theta_0}(x)  R^{(1)}_{\theta_0}(x) dx \right|dt.
 \end{align}
By \ref{assumption general differentiabilty}, the function defined for all $t\geq 0$ by
\begin{align*}
t \mapsto \int_{x\in B(0,t)} R_{\theta_0}(x)  R^{(1)}_{\theta_0}(x) dx
\end{align*}
is continuous so bounded on $[\rm,\rM]$. As already noticed after~\eqref{formula ripley DPP}, $K(t,\theta)>0$ on $ (\rm,\rM]\times \Theta$. Consequently, if $\rm>0$, the lemma is proved since $w$ is integrable on $[\rm,\rM]$ by~\ref{assumption ripley w}. Finally, if $\rm=0$, the integrability at $0$ of the function $t \mapsto \left| j_K(t) \right|$ follows from~\ref{assumption ripley r=0}.
\end{proof}

To shorten, define for all $n\in\N$ and $t\in [\rm,\rM]$, 
\begin{align*}
  H_n(t)&:=\wrho^{\, 2} \Kn(t)- 2\rho_0 K(t,\theta_0) \wrho .
\end{align*}
\begin{lemma}\label{lemma variance limite ripley}
If \ref{assumption general domaine}-\ref{assumption general differentiabilty} and \ref{assumption ripley w} hold, for all $s\in\R^d$, we have
 \begin{align*}
\lim_{n\rightarrow +\infty} \Dn \mathrm{Var} \Big( \int_\rm^\rM H_n(t)   s^T j_K(t) dt \Big)=  \int_{[\rm,\rM]^2}   h(t_1,t_2) s^T j_K(t_1) s^T j_K(t_2)  dt_1 dt_2
\end{align*}
where  $h_{\rho_0,\theta_0} $ is defined as in Theorem~\ref{theorem ripley normalite asymptotic}.
\end{lemma}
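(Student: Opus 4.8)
The plan is to linearise the integral, discard the edge corrections, expand the resulting variance through the factorial cumulant densities of $\X$, and finally extract the volume factor by stationarity. Writing
\[
a:=2\rho_0\int_\rm^\rM K(t,\theta_0)\,s^T j_K(t)\,dt,\qquad \phi(u):=\int_\rm^\rM \1_{\{|u|\le t\}}\,s^T j_K(t)\,dt,
\]
(both finite, the latter by Lemma~\ref{lemma bornitude}), and recalling that $\wrho^{\,2}\Kn(t)=\sum_{(x,y)\in\X^2}^{\neq}\1_{\{x\in D_n\}}\1_{\{y\in D_n^{\circleddash t}\}}\1_{\{|x-y|\le t\}}/|D_n^{\circleddash t}|$ while $\wrho=\X(D_n)/|D_n|$, a finite interchange of sum and integral gives
\[
\int_\rm^\rM H_n(t)\,s^T j_K(t)\,dt=\sum_{(x,y)\in\X^2}^{\neq}\1_{\{x\in D_n\}}\,\psi_n(x,y)-\frac{a}{|D_n|}\X(D_n),
\]
with $\psi_n(x,y):=\int_\rm^\rM |D_n^{\circleddash t}|^{-1}\1_{\{y\in D_n^{\circleddash t}\}}\1_{\{|x-y|\le t\}}\,s^T j_K(t)\,dt$.

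\emph{Edge-correction reduction.} Since $\{D_n\}$ is regular (Definition~\ref{definition regular set}), $|D_n^{\circleddash t}|=|D_n|(1+o(1))$ and the boundary layer $D_n\setminus D_n^{\circleddash t}$ has volume $O(\mathcal H_{d-1}(\partial D_n))=o(|D_n|)$, uniformly in $t\in[\rm,\rM]$. I would replace $\psi_n(x,y)$ by $|D_n|^{-1}\1_{\{y\in D_n\}}\phi(x-y)$ and bound the variance of the error; the discrepancy is concentrated on pairs with $y$ in the boundary layer, so a second-order moment estimate together with the Brillinger mixing property shows that, after multiplication by $|D_n|$, its variance is $O(\mathcal H_{d-1}(\partial D_n)/|D_n|)=o(1)$. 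It therefore suffices to study
\[
\widetilde T_n:=\frac1{|D_n|}\big(S_2-a\,S_1\big),\quad S_2:=\sum_{(x,y)\in\X^2}^{\neq}\1_{\{x,y\in D_n\}}\phi(x-y),\quad S_1:=\X(D_n),
\]
for which $|D_n|\,\mathrm{Var}(\widetilde T_n)=|D_n|^{-1}\big(\mathrm{Var}(S_2)-2a\,\mathrm{Cov}(S_2,S_1)+a^2\,\mathrm{Var}(S_1)\big)$.

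\emph{Cumulant expansion.} Next I would expand each of the three terms. Splitting $S_2^2$, $S_1S_2$ and $S_1^2$ according to the coincidence pattern of the summation indices produces integrals of $\phi(\cdot)\phi(\cdot)$, $\phi(\cdot)$ and $1$ against the factorial intensities $\rho^{(k)}$, $k\le 4$, over $D_n^{\,k}$. Subtracting the squared means and rewriting the $\rho^{(k)}$ through the partition formula in terms of the factorial cumulant densities, the fully disconnected contributions cancel and one is left with a finite sum of absolutely convergent integrals involving $c^{red}_{[2]},c^{red}_{[3]},c^{red}_{[4]}$ and powers of $\rho_0$; the combinations $c^{red}_{[2]}+\rho_0^2$, $c^{red}_{[3]}+\rho_0c^{red}_{[2]}$, and so on appearing in $h_{\rho_0,\theta_0}$ are precisely the lower-order blocks that survive the cancellation. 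Here the term $\mathrm{Var}(S_2)$ produces the seven ``pure'' integrals, $-2a\,\mathrm{Cov}(S_2,S_1)$ the two lines carrying a single factor $K(t_2,\theta_0)$, and $a^2\,\mathrm{Var}(S_1)$ the last line with factor $K(t_1,\theta_0)K(t_2,\theta_0)$ (using $\mathrm{Var}(\X(D_n))/|D_n|\to\rho_0-\int_{\R^d}C_{\rho_0,\theta_0}^2$).

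\emph{Passage to the limit.} Finally, for every surviving integral I would fix one variable and integrate the global translation over $D_n$; regularity of $\{D_n\}$ gives $\int_{D_n^{\,k}}=|D_n|\int_{(\R^d)^{k-1}}+o(|D_n|)$, the reduced integrals being absolutely convergent thanks to the Brillinger mixing property of $DPP(C_{\rho_0,\theta_0})$ noted after Theorem~\ref{theorem ripley normalite asymptotic}. Dividing by $|D_n|$ and using $\phi(u)\phi(v)=\int_\rm^\rM\int_\rm^\rM\1_{\{|u|\le t_1\}}\1_{\{|v|\le t_2\}}\,s^T j_K(t_1)\,s^T j_K(t_2)\,dt_1\,dt_2$ to restore the indicators $\1_{\{0<|x|\le t_1\}}\1_{\{0<|x'|\le t_2\}}$, each limiting integral matches exactly one line of $h_{\rho_0,\theta_0}$, giving the stated identity. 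The main obstacle is the combinatorial bookkeeping of this third step: correctly tracking the ten diagram contributions, checking that every disconnected term cancels against $(\E S_2)^2$ and the cross term, and controlling the edge corrections uniformly so that only the $|D_n|$-order reduced integrals remain. Finiteness of all these reduced integrals is not automatic; it is exactly what the Brillinger mixing property supplies.
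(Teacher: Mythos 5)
Your proposal is correct and follows essentially the paper's own route: the paper makes the identical decomposition of $\int_\rm^\rM H_n(t)\,s^T j_K(t)\,dt$ into a pair sum $\sum_{(x,y)\in\X^2}^{\neq}f_n(x,y)$ minus a point sum $\sum_{x\in\X}h_n(x)$ (your $\psi_n$ and $a\,S_1/|D_n|$), expands the variance into the three terms $\mathrm{Var}+\mathrm{Var}-2\,\mathrm{Cov}$, and then invokes Lemmas~7.1--7.3 of~\cite{bisciobrillingerTCL} for precisely the factorial-cumulant expansion, cancellation of disconnected terms, and stationarity-plus-Brillinger-mixing passage to the limit that you sketch by hand. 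Your only deviation is the preliminary replacement of the edge-corrected normalization $|D_n^{\circleddash t}|^{-1}\1_{\lbrace y\in D_n^{\circleddash t}\rbrace}$ by $|D_n|^{-1}\1_{\lbrace y\in D_n\rbrace}$, a step the paper does not need because the cited lemmas (cf.\ Theorem~\ref{TCL jolivet modifie}) already accommodate kernels normalized by any regular $\widetilde{D}_n$ with $|\widetilde{D}_n|/|D_n|\to\kappa$.
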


\begin{proof}
From~\eqref{formula estimator ripley}, we have
\begin{align*}
\int_\rm^\rM H_n(t)   s^T j_K(t) dt = \sum_{(x,y) \in \X^2} f_n (x,y) -   \sum_{x\in\X} h_n(x) 
\end{align*}
where for all $n\in\N$,
\begin{align*}
 f_n(x,y):=   \1_{\lbrace x\in D_n \rbrace} \int_\rm^\rM \frac{1}{|D_n^{\circleddash t}|} \1_{\lbrace y \in D_n^{\circleddash t} \rbrace} \1_{\lbrace 0<|x-y|\leq t \rbrace}   s^T j_K(t) dt 
 \end{align*}
 and 
 \begin{align*}
  h_n(x) = \frac{2\rho_0}{\Dn} \1_{\lbrace x\in D_n \rbrace}  \int_\rm^\rM K(t,\theta_0)    s^T j_K(t) dt.
 \end{align*}
 Notice that for all $n\in\N$ and $x\in\R^d$, $f_n(x,x)=0$. Thus, we have from the last equation,
\begin{multline*}
 \mathrm{Var} \left( \int_\rm^\rM H_n(t)   s^T j_K(t) dt \right) \\ = \mathrm{Var} \left(  \sum_{(x,y) \in \X^2}^{\neq} f_n (x,y) \right)  + \mathrm{Var} \left(  \sum_{x\in\X} h_n(x) \right) - 2\, \mathrm{Cov} \left(  \sum_{(x,y) \in \X^2}^{\neq} f_n (x,y) ,  \sum_{x\in\X} h_n(x)  \right).
\end{multline*}
These terms are developed in  Lemmas~7.1-7.3 of~\cite{bisciobrillingerTCL}, whereby we deduce the limit by a long but straightforward calculus. 
\end{proof}

\begin{lemma}\label{lemma ripley TCL}
 If \ref{assumption general domaine}-\ref{assumption general differentiabilty} and \ref{assumption ripley w}-\ref{assumption ripley r=0} hold, then
   \begin{align*}
\sqrt{\Dn}\int_\rm^\rM  \left[ \Kn(t) - K(t,\theta_0) \right] j_K(t)dt \convl \mathcal{N} (0,\Sigma_{\rho_0,\theta_0})                                                                                           \end{align*}
 where $\Sigma_{\rho_0,\theta_0}$ is defined as in Theorem~\ref{theorem ripley normalite asymptotic}.
\end{lemma}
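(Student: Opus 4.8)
The plan is to reduce the statement to a central limit theorem for a single centred second-order functional of $\X$, and then to invoke the asymptotic normality available for Brillinger mixing point processes through~\cite{bisciobrillingerTCL}.

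First I would remove the random normalisation $\wrho^{\,2}$ carried by $\Kn$. Setting $S_n(t):=\wrho^{\,2}\Kn(t)$, which is exactly the edge-corrected pair sum in~\eqref{formula estimator ripley}, the definition of $H_n$ gives the elementary identity
\begin{align*}
 S_n(t)-\wrho^{\,2}K(t,\theta_0)=\big(H_n(t)+\rho_0^2K(t,\theta_0)\big)-(\wrho-\rho_0)^2K(t,\theta_0),
\end{align*}
hence
\begin{align*}
 \sqrt{\Dn}\int_\rm^\rM\big[\Kn(t)-K(t,\theta_0)\big]j_K(t)\,dt=\frac{1}{\wrho^{\,2}}\sqrt{\Dn}\int_\rm^\rM\big(H_n(t)+\rho_0^2K(t,\theta_0)\big)j_K(t)\,dt+R_n,
\end{align*}
with $R_n=-\wrho^{\,-2}\sqrt{\Dn}(\wrho-\rho_0)^2\int_\rm^\rM K(t,\theta_0)j_K(t)\,dt$. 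Since $\wrho\convPs\rho_0$ and $\sqrt{\Dn}(\wrho-\rho_0)$ is asymptotically Gaussian, hence $O_P(1)$ (see~\cite{SoshnikovGaussianLimit,bisciobrillingerTCL}), and since $j_K$ is integrable by Lemma~\ref{lemma bornitude}, one gets $\sqrt{\Dn}(\wrho-\rho_0)^2=o_P(1)$ and $R_n=o_P(1)$. By Slutsky's lemma and $\wrho\convPs\rho_0$ it therefore suffices to prove a central limit theorem for
\begin{align*}
 G_n:=\int_\rm^\rM\big(H_n(t)+\rho_0^2K(t,\theta_0)\big)j_K(t)\,dt.
\end{align*}

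Next I would check that $G_n$ is a centred second-order functional of $\X$. The erosion correction $D_n^{\circleddash t}$ makes $S_n$ unbiased: a change of variables shows $E[S_n(t)]=\rho_0^2K(t,\theta_0)$, and $E[\wrho]=\rho_0$, so $E[H_n(t)]=-\rho_0^2K(t,\theta_0)$ and $E[G_n]=0$ exactly. Moreover, as in the proof of Lemma~\ref{lemma variance limite ripley}, $G_n$ splits as $\sum_{(x,y)\in\X^2}^{\neq}f_n(x,y)-\sum_{x\in\X}h_n(x)$ with the kernels $f_n,h_n$ displayed there; both are supported in $D_n$ and, by Lemma~\ref{lemma bornitude}, their defining $t$-integrals are finite uniformly in $n$. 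To obtain a multivariate statement I would use the Cramér–Wold device: fix $s\in\R^p$ and consider the scalar $s^TG_n$, which has the same structure with $j_K$ replaced by $s^Tj_K$. Expanding the cumulants of $\sqrt{\Dn}\,s^TG_n$ through the factorial cumulant measures of $\X$, every cumulant of order $k\geq3$ carries a prefactor of order $\Dn^{1-k/2}\to0$, the remaining integrals being finite because $DPP(C_{\rho_0,\theta_0})$ is Brillinger mixing under $\K(\rho_0)$~\cite{bisciobrillingerTCL}. Thus all cumulants of order at least three vanish asymptotically, the second cumulant converges to $s^T\Sigma_{\rho_0,\theta_0}s$ by Lemma~\ref{lemma variance limite ripley}, and the first is zero; hence $\sqrt{\Dn}\,s^TG_n\convl\mathcal{N}(0,s^T\Sigma_{\rho_0,\theta_0}s)$. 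Cramér–Wold together with the Slutsky reduction of the first step then yields $\sqrt{\Dn}\int_\rm^\rM[\Kn(t)-K(t,\theta_0)]j_K(t)\,dt\convl\mathcal{N}(0,\Sigma_{\rho_0,\theta_0})$.

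The main obstacle is the central limit theorem for the second-order functional $s^TG_n$. Unlike a purely additive statistic such as $\wrho$, the presence of the pair sum $\sum_{(x,y)\in\X^2}^{\neq}f_n$ means that the cumulant of order $k$ involves the reduced factorial cumulant densities of $\X$ up to order $2k$, so no elementary $m$-dependence or $\alpha$-mixing argument suffices; the vanishing of the higher cumulants rests squarely on the full Brillinger mixing property of stationary DPPs established in~\cite{bisciobrillingerTCL}, which is precisely why that property is the key ingredient here. The remaining work is bookkeeping: the finiteness and uniform boundedness of the kernels $f_n,h_n$ (Lemma~\ref{lemma bornitude}) and the convergence of the variance integral defining $\Sigma_{\rho_0,\theta_0}$, both of which again follow from the integrability of the reduced cumulant densities ensured by Brillinger mixing.
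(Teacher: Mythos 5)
Your proposal is correct and takes essentially the same route as the paper's own proof: both reduce the statement to a CLT for the very same centred functional $\sqrt{\Dn}\int \left(H_n(t)+\rho_0^2K(t,\theta_0)\right)j_K(t)\,dt$ (your $\sqrt{\Dn}\,G_n$ is exactly the paper's $C_n$), which is then handled by the Cram\'er--Wold device, the representation as a pair sum $\sum_{(x,y)\in\X^2}f_{D_n}(x,y)$ with a bounded compactly supported kernel, the variance limit of Lemma~\ref{lemma variance limite ripley}, and the Brillinger-mixing cumulant CLT of Theorem~\ref{TCL jolivet modifie}. The only difference is cosmetic: you remove the random normalisation $\wrho^{\,2}$ via the exact identity producing the remainder $\sqrt{\Dn}(\wrho-\rho_0)^2=O_P(1)\cdot o_P(1)=o_P(1)$, whereas the paper Taylor-expands $x\mapsto x^2$ at $\rho_0$ and disposes of the resulting terms $B_n$ and $o(A_n)$ using the uniform convergence of $\Kn$ from Lemma~\ref{lemma conv unif Ripley function}.
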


\begin{proof}
For all $n\in\N$, we have
\begin{multline}\label{egalite tcl ripley 1}
 \rho_0^2 \sqrt{\Dn} \int_{\rm}^{\rM} \left[ \Kn(t)-K(t,\theta_0) \right] j_K(t) dt  =  \sqrt{\Dn} \int_{\rm}^{\rM} \left[ \rho_0^2-\wrho^{\, 2} \right] \Kn(t) j_K(t) dt \\ +  \sqrt{\Dn} \int_{\rm}^{\rM} \left[ \wrho^{\, 2}\Kn(t)-\rho_0^2 K(t,\theta_0) \right] j_K(t) dt.
\end{multline}
Since $\X$ is ergodic by~\cite[Theorem 7]{Soshnikov:00},  $\wrho$ converges almost surely to $\rho_0$. Then, by Taylor expansion of the function $x\rightarrow x^2$ at $\rho_0$, we have almost surely
\begin{align}\label{taylor sur intensite ripley}
 \left[ \rho_0^2-\wrho^{\, 2} \right] =    2\rho_0  \left[ \rho_0-\wrho \right]  +  o\left(   \rho_0-\wrho \right).
\end{align}
Moreover,
 \begin{multline}\label{terme negligeable preuve tcl ripley}
 2\rho_0 \sqrt{\Dn} \int_{\rm}^{\rM} \left[ \rho_0-\wrho \right] \Kn(t) j_K(t)  dt \\ = 2\rho_0 \sqrt{\Dn} \int_{\rm}^{\rM} \left[ \rho_0-\wrho \right] \left[\Kn(t) - K(t,\theta_0) \right] j_K(t) dt  \\ +   2\rho_0 \sqrt{\Dn}\int_{\rm}^{\rM} \left[ \rho_0-\wrho \right] K(t,\theta_0) j_K(t) dt.
 \end{multline}
Using the notation
\begin{align*}
 A_n &=  2\rho_0 \sqrt{\Dn}\left[ \rho_0-\wrho \right] \int_{\rm}^{\rM}  \Kn(t) j_K(t) dt ,\\
 B_n &=  2\rho_0 \sqrt{\Dn}\left[ \rho_0-\wrho \right]  \int_{\rm}^{\rM} \left[\Kn(t) - K(t,\theta_0) \right] j_K(t) dt,\\
 C_n &=\sqrt{\Dn}\int_{\rm}^{\rM} \left(  \left[ \rho_0-\wrho \right] 2\rho_0 K(t,\theta_0) + \left[ \wrho^{\, 2}\Kn(t)-\rho_0^2 K(t,\theta_0) \right] \right) j_K(t) dt,
\end{align*}
we have by~\eqref{egalite tcl ripley 1}-\eqref{terme negligeable preuve tcl ripley},
\begin{align}\label{egalite tcl ripley short}
  \rho_0^2 &\sqrt{\Dn} \int_{\rm}^{\rM} \left[ \Kn(t)-K(t,\theta_0) \right] j_K(t) dt = B_n + C_n + o\left( A_n \right).
\end{align}
We prove that $B_n + o(A_n)$ tends in probability to $0$ and $C_n$ tends in distribution to a Gaussian variable. Then, the proof is concluded by Slutsky's theorem and \eqref{egalite tcl ripley short}.
By Lemma~\ref{lemma conv unif Ripley function}, 
\begin{align*}
  \sup_{t\in  [\rm,\rM]} \left| \Kn(t) - K(t,\theta_0) \right| \convPs 0
\end{align*}
so
\begin{align}\label{convergence integrale Kn}
 \int_{\rm}^{\rM}  \Kn(t) j_K(t) dt \convPs \int_{\rm}^{\rM}  K(t,\theta_0) j_K(t) dt.
\end{align}
Since $K(.,\theta_0)$ is continuous on $[\rm,\rM]$, $\int_{\rm}^{\rM}  K(t,\theta_0) j_K(t) dt $ is finite by Lemma~\ref{lemma bornitude}. Hence, by Corollary~\ref{corollary convergence normal intensite DPP}, \eqref{convergence integrale Kn} and Slutsky's theorem, we deduce that $B_n \convP 0$ and $o(A_n) \convP 0$.

As to the term $C_n$, notice that
\begin{align}\label{autre egalite Cn tcl ripley}
C_n = \sqrt{\Dn} \left(\int_\rm^\rM H_n(t)   j_K(t)dt -  \left[-  \int_\rm^\rM \rho_0^2 K(t,\theta_0) j_K(t)dt  \right] \right).
\end{align}
We prove the convergence in distribution of $C_n$ by the Cramer-Wold device, see for instance \cite[Theorem 29.4]{billingsleyprobaetmesure1979}. For all $t\in[\rm,\rM]$ and $s\in \R^p$, we have
\begin{align*}
s^T C_n=  \sqrt{\Dn} \left( \int_\rm^\rM  H_n(t)   s^T j_K(t) dt  - \left[-  \int_\rm^\rM \rho_0^2 K(t,\theta_0)  s^T j_K(t)dt  \right] \right).
\end{align*}
By~\eqref{formula estimator ripley}, we have
\begin{align}\label{autre exression fonctionnelle}
\int_\rm^\rM H_n(t)   s^T j_K(t) dt = \sum_{(x,y) \in \X^2} f_{D_n} (x,y)
\end{align}
where
\begin{align*}
 f_{D_n}(x,y):=  \  \1_{\lbrace x\in D_n \rbrace}\int_\rm^\rM \left( \frac{ \1_{\lbrace y \in D_n^{\circleddash t} \rbrace} }{|D_n^{\circleddash t}|} \1_{\lbrace 0<|x-y|\leq t \rbrace}   - 2\rho_0 \frac{K(t,\theta_0)}{\Dn} \1_{\lbrace x-y=0\rbrace}    \right) s^T j_K(t) dt.
\end{align*}
Notice that for $t\in[\rm,\rM]$, $s^T j_K(t)\leq |j_K(t)| |s|$ and $K(t,\theta_0)\leq K(\rM,\theta_0)$ so we have
\begin{multline} \label{majoration fDn}
 \left| f_{D_n}(x,y) \right| \\ \leq  \frac{|s|}{|D_n^{\circleddash \rM}|}  \1_{D_n}(x) \left( \1_{\lbrace 0<|x-y|\leq \rM \rbrace} +\1_{\lbrace x-y=0\rbrace} 2\rho_0  K(\rM,\theta_0)   \right) \int_{\rm}^\rM \left|j_K(t)\right|  dt.
\end{multline}
The right-hand term in~\eqref{majoration fDn} is compactly supported and is bounded by Lemma~\ref{lemma bornitude}.
Moreover, 
\begin{multline*}
 \E\left( \int_\rm^\rM  \left| H_n(t)    s^T j_K(t) \right|dt  \right)  \\ \leq  |s| \left[ \E\left(\left| \wrho^{\, 2} \Kn(t) \right|\right)+ 2\rho_0 K(\rM,\theta_0) \E\left(\left|\wrho \right| \right)\right] \int_\rm^\rM |j_K(t)|dt.
\end{multline*}
 Further, for $n\in \N$ and $t\in[\rm,\rM]$, $\wrho^{\, 2} \Kn(t) $ and $\wrho$ are positive and unbiased estimator of $\rho_0^2 K(t,\theta_0)$ and $\rho_0$, respectively, see for instance \cite[Section 4.2.2]{heinrichasymptotic:13}. Thus,
\begin{align*}
   \E\left( \int_\rm^\rM \left|  H_n(t)  s^T j_K(t)\right| dt  \right) \leq    3|s| \rho_0^2 K(\rM,\theta_0) \int_\rm^\rM |j_K(t)|dt,
\end{align*}
which is finite by Lemma~\ref{lemma bornitude}. Then, by Fubini's theorem, \eqref{autre exression fonctionnelle} and the last equation, we have
\begin{align*}
\E\left( \sum_{(x,y)\in \X^2}  f_{D_n}(x,y) \right) = -  \int_\rm^\rM \rho_0^2 K(t,\theta_0)s^T j_K(t) dt.
\end{align*}
Moreover, by \eqref{autre exression fonctionnelle} and  Lemma~\ref{lemma variance limite ripley}, 
\begin{align*}
 \lim_{n \rightarrow + \infty} Var \left( \sqrt{\Dn} \sum_{(x,y) \in \X^2} f_{D_n} (x,y) \right) = s^T \Sigma_{\rho_0,\theta_0} s. 
\end{align*}
Therefore, by~\eqref{autre egalite Cn tcl ripley}-\eqref{majoration fDn}, the last two equations and Theorem~\ref{TCL jolivet modifie}, we have
\begin{align*}
s^T C_n \convl N(0,s^T \Sigma_{\rho_0,\theta_0} s ).
\end{align*}
which proves that $ C_n \convl N(0,\Sigma_{\rho_0,\theta_0})$.
\end{proof}

\subsection{Proof of Theorem~\ref{theorem pcf normalite asymptotique}}\label{section preuve theoreme pcf}

As in the proof of Theorem~\ref{theorem ripley normalite asymptotic}, we consider without loss of generality  $\epsilon >0$ such that $\Theta_{\hat \rho _n} \subset \Theta_{\rho_0}^{ \oplus \epsilon}$, for all $n\in \N$. 
We  prove below the consistency and asymptotic normality of  $\ttheta$ defined in~\eqref{definition thetatilde}  with $\Theta = \Theta_{\rho_0}^{ \oplus \epsilon}$,  $J=g$ and  $\Jn=\gn$. Then, for $r\geq 0$ such that $B(\theta_0,r) \subset \Theta_{\rho_0}$, we have
\begin{align*}
 P( \ttheta \in B(\theta_0, r) ) \xrightarrow[n\rightarrow +\infty]{} 1.
\end{align*}
Thus, by Lemma~\ref{lemma point dehors cpct}, with probability tending to one $\ttheta \in \Theta_{\hat \rho _n}$ so
\begin{align*}
 P( \ttheta = \wtheta ) \xrightarrow[n\rightarrow +\infty]{} 1.
\end{align*}
Therefore, $\wtheta$ has the same asymptotic behaviour than $\ttheta$.

\medskip
Let us now determine the asymptotic properties of $\ttheta$ by application of Theorems~\ref{theoreme consistency} and~\ref{theoreme normalite asymptotique}. The assumptions \ref{assumption Theta compact et Dn regular}, \ref{assumption w}, \ref{assumption identifiability}, \ref{assumption J derivable 2} and \ref{assumption B inversible} are directly implied by \ref{assumption general domaine}-\ref{assumption general differentiabilty}, \ref{assumption pcf rm}, \ref{assumption pcf w}, \ref{assumption pcf identifiability} and \ref{assumption pcf B inversible}.  Moreover, $\rm>0$ by~\ref{assumption pcf rm} so \ref{assumption Jn borne positive} is directly implied by \eqref{formula estimator pcf}, \ref{assumption pcf k}, \ref{assumption pcf bn} and the ergodic theorem, see \cite{zessinergodic} or~\cite{heinrichasymptotic:13}.
By \ref{assumption general Krho}, $R_{\theta_0}(.)$  is continuous on $[\rm,\rM]$ so is $g$. By \cite[Corollary 1.4.14]{sasvari2013multivariate}, for all $\theta \in \Theta$, if for a given $t>0$, $|R_\theta(t)| =1$, then $R_\theta$ is periodic of period $t$. This is incompatible with~\ref{assumption general Krho} so,  for all $t>0$ and $\theta \in \Theta$, $|R_\theta(t)| <1$. 
Consequently, by \eqref{expression pcf noyau DPP} and~\ref{assumption pcf rm}, $g(t,\theta)$ is strictly positive for all $(t,\theta) \in [\rm,\rM]\times \Theta$. Thus, for all $c\in \R$, $g(.,.)^{c}$ is well defined and strictly positive on $[\rm,\rM]\times \Theta$ so \ref{assumption continuite de J et param c}  holds. By the same arguments, it follows that \ref{assumption integrability pour B} holds. Finally, the assumptions~\ref{assumption convergence uniforme} and $(\mathcal{TCL})$ are proved by Lemmas~\ref{lemma conv univ pcf} and \ref{lemma pcf TCL}, respectively while the other lemmas are auxiliary results.

\begin{lemma}\label{lemma conv univ pcf}
 If \ref{assumption general domaine}-\ref{assumption general differentiabilty},   \ref{assumption pcf rm} and \ref{assumption pcf k}-\ref{assumption pcf bn}  hold then, for all $\rM > \rm >0$, there exists a set $A$ verifying $\left| [\rm,\rM] \setminus A\right| =0$ such that 
  \begin{align*}
\sup_{ t \in A} \left| \gn(t) - g(t,\theta_0) \right| \convP 0.
 \end{align*}
\end{lemma}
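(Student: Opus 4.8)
The plan is to establish the uniform convergence in probability of the kernel estimator $\gn$ to the true pair correlation function $g(\cdot,\theta_0)$, outside a Lebesgue-null exceptional set. First I would decompose the problem by writing $\gn(t) - g(t,\theta_0)$ as a sum of a random fluctuation term and a deterministic bias term, namely $\gn(t) - \E(\gn(t))$ and $\E(\gn(t)) - g(t,\theta_0)$. The strategy is then to control each piece separately and uniformly over $t \in [\rm,\rM]$.

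For the bias term $\E(\gn(t)) - g(t,\theta_0)$, I would compute the expectation of $\gn(t)$ using the Campbell-type formula applied to the definition \eqref{formula estimator pcf}, which expresses $\E\bigl(\wrho^{\, 2}\gn(t)\bigr)$ as an integral of the second-order intensity $\rho^{(2)}$ against the smoothing kernel $k\bigl((t-|u|)/b_n\bigr)$. Since $g$ is continuous on $[\rm,\rM]$ (this follows from \ref{assumption general Krho} and the argument that $|R_\theta(t)|<1$, already used in the main proof text), a standard kernel-smoothing estimate shows that the bias is $O(b_n^2)$ at Lebesgue points of $g$, using the symmetry of $k$ from \ref{assumption pcf k} and $b_n \to 0$ from \ref{assumption pcf bn}. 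Because $\wrho$ is only asymptotically $\rho_0$, I would also need to absorb the ratio $\rho_0^2/\wrho^{\, 2}$, which converges almost surely to $1$ by ergodicity, via a Slutsky-type argument after handling the centered version $\wrho^{\,2}\gn$.

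The random part is the main obstacle. I would control $\sup_{t}\bigl|\wrho^{\,2}\gn(t) - \E(\wrho^{\,2}\gn(t))\bigr|$ by first bounding the pointwise variance and then upgrading to uniform control. The variance of $\gn(t)$ involves second-, third-, and fourth-order reduced cumulant densities, and here the Brillinger mixing property of the DPP (from \cite{bisciobrillingerTCL}), together with the explicit cumulant expressions \eqref{expression densite cumulant 2}-\eqref{expression densite cumulant 4}, guarantees that these integrals are finite. The key scaling is that $\mathrm{Var}(\gn(t))$ behaves like $O\bigl(1/(b_n\Dn)\bigr)$, which tends to zero precisely because $b_n\Dn \to +\infty$ by \ref{assumption pcf bn}; this gives pointwise convergence in probability. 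Moving from pointwise to uniform convergence is the delicate step: rather than proving tightness of the whole process, I expect the cleaner route is to invoke an $L^1$ or $L^2$ convergence argument over $[\rm,\rM]$, showing that $\int_\rm^\rM \E\bigl|\gn(t)-g(t,\theta_0)\bigr| dt \to 0$, and then extract a subsequence converging almost everywhere; this explains the appearance of the exceptional null set $A$ in the statement, which is characteristic of $L^p$-type (rather than genuinely uniform) convergence for kernel estimators.

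Finally, I would assemble the pieces: the bias vanishes at every Lebesgue point of $g$ as $b_n\to 0$, and the centered estimator converges to zero in the appropriate averaged sense because $b_n\Dn\to+\infty$. Combining these with the almost-sure convergence $\wrho \to \rho_0$ and Slutsky's theorem yields the claimed convergence $\sup_{t\in A}|\gn(t)-g(t,\theta_0)| \convP 0$ on a co-null set $A$. The condition $b_n^4\Dn\to 0$ from \ref{assumption pcf bn}, which does not seem strictly necessary for mere consistency, is presumably reserved for the asymptotic normality in Lemma~\ref{lemma pcf TCL} to force the squared bias to be negligible relative to the $\sqrt{\Dn}$ rate, so I would not expect to need its full strength here.
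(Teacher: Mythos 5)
Your bias--variance analysis (bias of order $b_n^2$, variance of order $1/(b_n\Dn)$, finiteness of the cumulant integrals via Brillinger mixing) is essentially a re-derivation of what the paper imports wholesale by citing Proposition~4.5 of~\cite{bisciobrillingerTCL}, namely
\begin{align*}
\E\Big[\int_\rm^\rM \big(\wrho^{\,2}\gn(t)-\rho_0^2 g(t,\theta_0)\big)^2 dt\Big]
= \frac{2\rho_0^2}{b_n\Dn}\int_\rm^\rM \frac{g(t,\theta_0)}{\sigma_d t^{d-1}}dt \int_\R k(x)^2dx
+ O\Big(\frac{1}{\Dn}\Big)+O(b_n^4),
\end{align*}
and your remark that the full strength of $b_n^4\Dn\to 0$ is not needed for mere consistency is correct (only $b_n\to 0$ and $b_n\Dn\to+\infty$ enter). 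Your treatment of the random factor $\wrho$ by Slutsky's theorem also matches the last step of the paper's proof, which works with $\wrho^{\,2}\gn(t)-\rho_0^2 g(t,\theta_0)$ and divides out $\wrho^{\,2}$ at the end.

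The genuine gap is in your passage from integrated convergence to the conclusion of the lemma. You propose to show $\int_\rm^\rM \E\big|\gn(t)-g(t,\theta_0)\big|dt\to 0$ and then ``extract a subsequence converging almost everywhere.'' This cannot deliver the stated result, for two reasons. First, the subsequence device only yields a.e.\ convergence along a subsequence, with the exceptional $t$-null set depending on $\omega$ and on the chosen subsequence, whereas the lemma demands one fixed co-null set $A$, independent of $n$ and $\omega$, on which the supremum of the \emph{full} sequence tends to $0$ in probability. Second, and more fundamentally, the mode of convergence asserted in the lemma is essential-supremum convergence, which is strictly stronger than $L^1$, $L^2$ or a.e.\ pointwise convergence: a sequence of indicator bumps of height $1$ and width $1/n$ converges to $0$ in $L^2$ and almost everywhere, yet its supremum over \emph{every} co-null set equals $1$ for all $n$; Egorov's theorem would only give uniform convergence off a set of small \emph{positive} measure, never off a null set. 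The paper does not pass through pointwise convergence at all: it deduces the existence of the co-null set $A$ with $\sup_{t\in A}\big|\wrho^{\,2}\gn(t)-\rho_0^2 g(t,\theta_0)\big|\convP 0$ directly at the level of the displayed $L^2$ bound (invoking the positivity of the integrand), i.e.\ it treats the cited moment estimate as controlling the sup-norm discrepancy, and only then applies Slutsky. Whatever one thinks of the brevity of that deduction, it operates at the sup-norm level; your $L^1$-plus-subsequence route is a different argument and cannot be repaired to yield the lemma as stated.
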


\begin{proof}
From~\ref{assumption general Krho}-\ref{assumption general differentiabilty} and  \ref{assumption pcf k}-\ref{assumption pcf bn}  we can use Proposition~4.5 in~\cite{bisciobrillingerTCL} that gives
\begin{multline}\label{egalite application lemma stella}
 \E\left[ \int_\rm^\rM \left(\wrho^{\, 2} \gn(t) - \rho_0^2 g(t,\theta_0) \right)^2 dt \right] \\ = \frac{2 \rho_0^2}{b_n \Dn} \int_\rm^\rM \frac{g(t,\theta_0)}{  \sigma_d t^{d-1} } dt \int_\R k(x)^2 dx + O\left(\frac{1}{\Dn}\right) + O(b_n^4).
\end{multline}
By \ref{assumption pcf rm}, \ref{assumption pcf k} and \ref{assumption general differentiabilty} we have $\int_\rm^\rM \frac{g(t,\theta_0)}{\sigma_d t^{d-1} } dt \int_\R k(x)^2 dx< +\infty$. Hence, with \ref{assumption pcf bn}, the right-hand term in~\eqref{egalite application lemma stella} tends to $0$ as $n$ tends to infinity. Moreover, the term inside the expectation in~\eqref{egalite application lemma stella} is positive so there exists a set $A$  as in Lemma~\ref{lemma conv univ pcf} such that 
 \begin{align}\label{conv unig rhon gn}
  \sup_{ t \in A} \left| \wrho^{\, 2}\gn(t) - \rho_0^ 2 g(t,\theta_0) \right| \convP 0.
 \end{align}
 We have
 \begin{align*}
\wrho^{\, 2} \sup_{ t \in A} \left|  \gn(t) -  g(t,\theta_0) \right|  \leq  \sup_{ t \in A }\left| \wrho^{\, 2} \gn(t) - \rho_0^2 g(t,\theta_0) \right| + \bigg(\sup_{ t \in A} g(t,\theta_0) \bigg)   \left| \wrho^{\, 2}  - \rho_0^2\right|.
\end{align*}
By~\ref{assumption general domaine}-\ref{assumption general Krho}, it follows from Corollary~\ref{corollary convergence normal intensite DPP} that $\wrho$ converges in probability to $\rho_0$. Further, by~\ref{assumption general differentiabilty} and~\eqref{expression pcf noyau DPP}, $g(.,\theta_0)$ is bounded on $[\rm,\rM]$. Therefore,  we have by~\eqref{conv unig rhon gn} the convergence
\begin{align*}
  \sup_{ t \in A  }\left|  \gn(t) -  g(t,\theta_0) \right| \convP 0.
\end{align*}
\end{proof}

 \begin{lemma}\label{lemma pcf bornitude}
  If \ref{assumption general domaine}-\ref{assumption general differentiabilty}, \ref{assumption pcf rm}-\ref{assumption pcf w} hold then $ j_g(.)$ is continuous on $[\rm,\rM]$.
 \end{lemma}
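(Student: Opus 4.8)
The plan is to factor $j_g(t)=w(t)\,g(t,\theta_0)^{2c-2}\,g^{(1)}(t,\theta_0)$ and to establish the continuity of each of the three factors on $[\rm,\rM]$ separately, after which the conclusion follows because a product of continuous functions is continuous. The factor $w$ is continuous by~\ref{assumption pcf w}. For the middle factor, note that by~\ref{assumption general Krho} the kernel $C_{\rho_0,\theta_0}$ verifies $\K(\rho_0)$ and is therefore continuous, so the radial correlation function $R_{\theta_0}$ is continuous on $[\rm,\rM]$; moreover, as already established in the proof of Theorem~\ref{theorem pcf normalite asymptotique} via~\cite[Corollary 1.4.14]{sasvari2013multivariate}, one has $|R_{\theta_0}(t)|<1$ for every $t>0$, so that $g(t,\theta_0)=1-R_{\theta_0}(t)^2$ is continuous and strictly positive on $[\rm,\rM]$ because $\rm>0$ by~\ref{assumption pcf rm}. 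Consequently $g(\cdot,\theta_0)^{2c-2}$ is continuous there for every $c\in\R$.

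The remaining, and genuinely delicate, factor is $g^{(1)}(\cdot,\theta_0)$. Since $\theta\mapsto R_\theta(t)$ is of class $\mathcal{C}^2$ by~\ref{assumption general differentiabilty}, the chain rule gives $g^{(1)}(t,\theta_0)=-2\,R_{\theta_0}(t)\,R_{\theta_0}^{(1)}(t)$, so it suffices to prove that $t\mapsto R_{\theta_0}^{(1)}(t)$ is continuous on $[\rm,\rM]$. The difficulty is that~\ref{assumption general differentiabilty} only controls the regularity of $R_\theta(t)$ in the parameter $\theta$ (for each fixed $t$), together with a uniform bound on the $\theta$-derivatives, while the condition $\K(\rho_0)$ gives regularity only in the spatial variable $t$; neither piece of information, taken alone, yields continuity in $t$ of the mixed object $R_{\theta_0}^{(1)}(t)$.

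To bridge this gap I would invoke a Taylor expansion in $\theta$ with a uniformly controlled remainder. Fixing a coordinate direction $e_i$ and $s>0$ small enough that $\theta_0+s e_i\in\Theta_{\rho_0}^{\oplus\epsilon}$ (with $\epsilon$ as in~\ref{assumption general differentiabilty}), the mean-value form of Taylor's theorem gives, for every $t\in B(0,\rM)$,
\[
\partial_{\theta_i}R_{\theta_0}(t)=\frac{R_{\theta_0+s e_i}(t)-R_{\theta_0}(t)}{s}-\frac{s}{2}\,\partial_{\theta_i}^2 R_{\theta'}(t)
\]
for some $\theta'$ on the segment $[\theta_0,\theta_0+s e_i]$. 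The first term on the right is continuous in $t$ (a fixed linear combination of the continuous functions $R_\theta(\cdot)$), while the remainder is bounded by $\tfrac{s}{2}M$ uniformly in $t$ by the second bound in~\ref{assumption general differentiabilty}. Hence $\partial_{\theta_i}R_{\theta_0}(\cdot)$ is a uniform limit, as $s\to 0$, of continuous functions on $B(0,\rM)$, and is therefore itself continuous there. Applying this to each coordinate $i\in\{1,\dots,p\}$ shows $t\mapsto R_{\theta_0}^{(1)}(t)$ is continuous, whence $g^{(1)}(\cdot,\theta_0)$ is continuous and $j_g$ is continuous on $[\rm,\rM]$.

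I expect the main obstacle to be exactly this last point: extracting continuity of the $\theta$-gradient as a function of the spatial variable $t$ from hypotheses that control the two variables only separately. The uniform-limit-of-continuous-functions argument is what makes the conclusion go through; the rest is routine bookkeeping with the positivity and continuity of $g(\cdot,\theta_0)$ already recorded before Lemma~\ref{lemma conv univ pcf}.
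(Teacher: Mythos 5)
Your proof is correct, and it follows the same factorization as the paper: continuity of $w$ by \ref{assumption pcf w}, continuity and strict positivity of $g(\cdot,\theta_0)=1-R_{\theta_0}(\cdot)^2$ on $[\rm,\rM]$ via $\K(\rho_0)$, the Sasv\'ari corollary and $\rm>0$, and then the factor $g^{(1)}(\cdot,\theta_0)=-2R_{\theta_0}(\cdot)R^{(1)}_{\theta_0}(\cdot)$. The genuine difference lies in how this last factor is handled. The paper simply asserts that $R_{\theta_0}(\cdot)$ \emph{and} $R_{\theta_0}^{(1)}(\cdot)$ are continuous on $[\rm,\rM]$ ``by \ref{assumption general differentiabilty}'', which, as you correctly diagnose, does not literally follow: \ref{assumption general differentiabilty} controls regularity in $\theta$ for fixed $t$ (plus a uniform bound), while $\K(\rho_0)$ controls regularity in $t$ for fixed $\theta$, and neither alone gives continuity in $t$ of the $\theta$-gradient. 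Your Taylor/mean-value argument, exhibiting $\partial_{\theta_i}R_{\theta_0}(\cdot)$ as a uniform limit (with error at most $sM/2$) of difference quotients that are continuous in $t$, is exactly the missing bridge, so your write-up is in fact more complete than the paper's. One small repair: for the difference quotient to be continuous in $t$ you need $R_{\theta_0+se_i}(\cdot)$ continuous in the spatial variable, and \ref{assumption general Krho} guarantees the condition $\K(\rho_0)$ (hence spatial continuity) only for parameters in $\Theta_{\rho_0}$, not on the dilation $\Theta_{\rho_0}^{\oplus\epsilon}$; so you should take $s$ small enough that $\theta_0+se_i\in\Theta_{\rho_0}$, which is possible precisely because $\theta_0$ is an interior point of $\Theta_{\rho_0}$.
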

 
\begin{proof}
By~\eqref{expression pcf noyau DPP}, we have for all $t\in [\rm,\rM]$
\begin{align*}
|j_g(t)|=2  \left|w(t) \left(1-R_{\theta_0}(t)^2 \right)^{2c-2} R_{\theta_0}(t) R_{\theta_0}^{(1)}(t)\right| .
\end{align*}
By \ref{assumption general differentiabilty}, $R_{\theta_0}(.)$  and $R_{\theta_0}^{(1)}(.)$ are continuous on $[\rm,\rM]$. Further, by~\ref{assumption pcf rm}, $\rm>0$ and as noticed at the beginning of the proof of Theorem~\ref{theorem pcf normalite asymptotique}, for all $t>0$, $\left|R_{\theta_0}(t)\right|<1$. Thus by \ref{assumption general differentiabilty}, the function $t \mapsto \left(1-R_{\theta_0}(t)^2 \right)^{2c-2}$ is well defined and continuous on $[\rm,\rM]$.  Finally, by~\ref{assumption pcf w}, $w$ is continuous on $[\rm,\rM]$ so the lemma is proved. 
\end{proof}

To abbreviate, we define for all $n\in\N$ and $t\in [\rm,\rM]$, 
\begin{align*}
  H^g_n(t)&:=\wrho^{\, 2} \gn(t)- 2\rho_0 \wrho g(t,\theta_0). 
\end{align*}

\begin{lemma}\label{lemma variance limite pcf}
If \ref{assumption general domaine}-\ref{assumption general differentiabilty} and~\ref{assumption pcf rm}-\ref{assumption pcf differentiabilty}
  hold, we have for all $s\in\R^d$,
 \begin{align*}
\lim_{n\rightarrow +\infty} \Dn \mathrm{Var} \left( \int_\rm^\rM H^g_n(t) s^T j_g(t) dt \right)=  s^T \Sigma_{\rho_0,\theta_0} s
\end{align*}
with  $\Sigma_{\rho_0,\theta_0}$ defined as in Theorem~\ref{theorem pcf normalite asymptotique}. 
\end{lemma}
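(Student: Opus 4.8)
The plan is to follow the same route as the proof of Lemma~\ref{lemma variance limite ripley}, the only genuinely new ingredient being the control of the kernel smoothing present in the estimator~\eqref{formula estimator pcf}. First I would rewrite the integrated functional as a combination of sums over the points of $\X$. Since the factor $\wrho^{\,2}$ in $H^g_n$ cancels the normalisation in~\eqref{formula estimator pcf} and $\wrho$ is given by~\eqref{estimation rho}, one gets
\[
\int_\rm^\rM H^g_n(t)\, s^T j_g(t)\, dt = \sum_{(x,y)\in\X^2}^{\neq} f_n(x,y) - \sum_{x\in\X} h_n(x),
\]
where
\[
f_n(x,y) := \1_{\{x\in D_n\}}\1_{\{y\in D_n\}} \int_\rm^\rM \frac{1}{\sigma_d t^{d-1}}\, \frac{1}{b_n\,|D_n\cap D_n^{x-y}|}\, k\!\left(\frac{t-|x-y|}{b_n}\right) s^T j_g(t)\, dt
\]
and
\[
h_n(x) := \frac{2\rho_0}{\Dn}\,\1_{\{x\in D_n\}} \int_\rm^\rM g(t,\theta_0)\, s^T j_g(t)\, dt .
\]
The diagonal $x=y$ is automatically excluded by the superscript $\neq$, so, expanding the square,
\begin{multline*}
\mathrm{Var}\left(\int_\rm^\rM H^g_n(t)\, s^T j_g(t)\,dt\right) = \mathrm{Var}\left(\sum_{(x,y)\in\X^2}^{\neq} f_n(x,y)\right) \\ + \mathrm{Var}\left(\sum_{x\in\X} h_n(x)\right) - 2\,\mathrm{Cov}\left(\sum_{(x,y)\in\X^2}^{\neq} f_n(x,y),\ \sum_{x\in\X} h_n(x)\right).
\end{multline*}

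Second, I would expand each of these three terms through the moment formulae of Lemmas~7.1--7.3 in~\cite{bisciobrillingerTCL}, which express the variance and covariance of sums $\sum^{\neq}f_n$ and $\sum h_n$ in terms of the intensity $\rho_0$ and the reduced factorial cumulant densities $c_{[2]}^{red}, c_{[3]}^{red}, c_{[4]}^{red}$ of~\eqref{expression densite cumulant 2}--\eqref{expression densite cumulant 4}. Applied to the present $f_n$ and $h_n$, this yields, after multiplication by $\Dn$, a finite list of integrals against those cumulant densities, each weighted by one or two smoothing factors $b_n^{-1}k((t-|x-y|)/b_n)$ and by the translation edge-correction $|D_n\cap D_n^{x-y}|^{-1}$.

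Third, I would pass to the limit $n\to\infty$. Two reductions do the work. (i) \emph{Kernel concentration}: for continuous $\phi$ and $a$ in a compact subset of $(0,\infty)$, one has $\int_\rm^\rM b_n^{-1}k((t-a)/b_n)\phi(t)\,dt\to\phi(a)$ as $b_n\to0$, which applies to $\phi(t)=s^T j_g(t)/(\sigma_d t^{d-1})$ because $\rm>0$ by~\ref{assumption pcf rm} and $j_g$ is continuous by Lemma~\ref{lemma pcf bornitude}; the $O(b_n^2)$ bias allowed by the symmetry of $k$ in~\ref{assumption pcf k} together with the smoothness supplied by~\ref{assumption general differentiabilty} and~\ref{assumption pcf differentiabilty} is negligible. (ii) \emph{Domain asymptotics}: for regular sets $|D_n\cap D_n^{u}|/\Dn\to1$ at each fixed $u$, the boundary defect being $O(\mathcal{H}_{d-1}(\partial D_n))=O(n^{d-1})=o(\Dn)$. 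Carrying these out, each $\Dn$-normalised integral converges to its stated form, with $\sigma_d^{-1}t^{-(d-1)}$ evaluated at the relevant argument. Matching the outcome against $\Sigma_{\rho_0,\theta_0}$ in Theorem~\ref{theorem pcf normalite asymptotique}, the seven integrals produced by $\mathrm{Var}(\sum^{\neq}f_n)$ — arising from the configurations in which the two index pairs coincide, share one point, or are disjoint, after the standard expansion of the factorial moment measures into cumulants — give the first seven terms; the two terms carrying the factor $\int_\rm^\rM g(t,\theta_0)j_g(t)\,dt$ come from the covariance; and the final $(\int g\,j_g)^2(\rho_0-\int C_{\rho_0,\theta_0}^2)$ term comes from $\mathrm{Var}(\sum h_n)=\Dn^{-2}\bigl(2\rho_0\int g\,s^Tj_g\bigr)^2\mathrm{Var}(\X(D_n))$, using $c_{[2]}^{red}=-C^2$ and $\mathrm{Var}(\X(D_n))/\Dn\to\rho_0-\int C_{\rho_0,\theta_0}^2$. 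Together they reproduce $s^T\Sigma_{\rho_0,\theta_0}s$.

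The step I expect to be the main obstacle is showing that this limit is finite of the right order despite the smoothing, i.e. the exact cancellation of the bandwidth. The pointwise variance of $\gn(t)$ is of order $(b_n\Dn)^{-1}$, but in the \emph{integrated} functional the covariance $\mathrm{Cov}(\gn(t_1),\gn(t_2))$ is concentrated, through the product of the two kernels, on a band $|t_1-t_2|=O(b_n)$; integrating the $b_n^{-1}$ spike over this $O(b_n)$-wide band against the continuous weight $s^Tj_g$ recovers a clean $O(\Dn^{-1})$ order with a finite limit, which is precisely what produces the first (single-integral) term of $\Sigma_{\rho_0,\theta_0}$. Making this rigorous — and verifying uniformly that the edge-correction and $O(b_n^2)$ bias remainders vanish under~\ref{assumption pcf k}--\ref{assumption pcf bn} — is the delicate part; the remaining bookkeeping is long but routine, exactly as in Lemma~\ref{lemma variance limite ripley}.
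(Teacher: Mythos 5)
Your proposal follows essentially the same route as the paper: the same decomposition of $\int_\rm^\rM H^g_n(t)\,s^Tj_g(t)\,dt$ into a double sum $\sum^{\neq}f_n$ minus a single sum $\sum h_n$, the same expansion of the variance into two variances and a covariance, and the same appeal to Lemmas~7.1--7.3 of~\cite{bisciobrillingerTCL} to compute each term. Your additional discussion of kernel concentration, edge-correction asymptotics and the cancellation of the bandwidth is precisely the content the paper compresses into ``a long but straightforward calculus,'' so the proposal is correct.
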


\begin{proof}
 Similarly to the proof of Lemma~\ref{lemma variance limite ripley}, we have by~\eqref{formula estimator pcf},
\begin{align*}
\int_\rm^\rM H^g_n(t)   s^T j_g(t) dt = \sum_{(x,y) \in \X^2} f_n (x,y) - \sum_{x\in\X} h_n(x) 
\end{align*}
where for all $n\in\N$,
\begin{align*}
 f_n(x,y):=   \1_{\lbrace x\in D_n \rbrace} \int_\rm^\rM \frac{k\left( \frac{t-|x-y|}{b_n} \right) \1_{\lbrace|x-y|>0, y\in  D_n \rbrace} }{ \sigma_d t^{d-1}b_n |D_n \cap D_n^{x-y}|}  s^T j_g(t) dt
 \end{align*}
 and 
 \begin{align*}
  h_n(x) = \frac{2 \rho_0 }{\Dn} \1_{\lbrace x\in D_n \rbrace} \int_\rm^\rM g(t,\theta_0)    s^T j_g(t) dt .
 \end{align*}
The result follows similarly as in the proof of Lemma~\ref{lemma variance limite ripley} using  Lemmas~7.1-7.3 in~\cite{bisciobrillingerTCL}.
 \end{proof}

\begin{lemma}\label{lemma majoration finale pour MCE g}
Assume that  \ref{assumption general domaine}-\ref{assumption general differentiabilty} and \ref{assumption pcf rm}-\ref{assumption pcf bn}  hold. For a given $s\in\R^d$ and all $n\in \N$, let  $f_{D_n}$  be defined for any $(x,y)\in \R^{2d}$ by
 \begin{multline*}
 f_{D_n}(x,y)\\ :=   \1_{\lbrace x\in D_n \rbrace }\int_\rm^\rM \left(\frac{k\left( \frac{t-|x-y|}{b_n} \right) \1_{\lbrace|x-y|>0, y\in  D_n \rbrace} }{ \sigma_d t^{d-1}b_n |D_n \cap D_n^{x-y}|} - \frac{2\rho_0  g(t,\theta_0)}{\Dn} \1_{\lbrace x-y=0\rbrace}    \right) s^T j_g(t) dt.
\end{multline*}
Then, there exists $M>0$ such that for all $(x,y)\in \R^{2d}$,
\begin{align*} 
 \left| f_{D_n}(x,y) \right| \leq \frac{|s| M\1_{\left\lbrace x\in D_n\right\rbrace}}{|D_n^{\circleddash \rM+T}|}  \left(  \frac{1}{\sigma_d r^{d-1}_{min}} \1_{\left\lbrace 0< |x-y| \leq \rM+ T\right\rbrace} + 2\rho_0 ||g||_{\infty} \1_{\lbrace x-y=0 \rbrace}  \right).
\end{align*}
\end{lemma}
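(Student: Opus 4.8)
The plan is to bound the two contributions to $f_{D_n}$ separately and uniformly in $(x,y)$, after recording two boundedness facts. First, by Lemma~\ref{lemma pcf bornitude} the map $j_g$ is continuous on $[\rm,\rM]$, hence bounded by some $\|j_g\|_\infty$, so that $|s^T j_g(t)|\le |s|\,\|j_g\|_\infty$ for every $t$; likewise $g(\cdot,\theta_0)$ is continuous and therefore bounded on $[\rm,\rM]$ by $\|g\|_\infty$. Second, since $b_n\to 0$ by~\ref{assumption pcf bn}, we may assume $b_n\le 1$, which is all that is needed, so that the compact support of $k$ in $[-T,T]$ from~\ref{assumption pcf k} forces $k\big((t-|x-y|)/b_n\big)$ to vanish unless $|x-y|\in[t-Tb_n,\,t+Tb_n]\subset[\rm-T,\,\rM+T]$. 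In particular the first summand of $f_{D_n}$ is supported on $\{0<|x-y|\le\rM+T\}$, the lower bound coming from the factor $\1_{\{|x-y|>0\}}$ already present in the definition; this matches the first indicator in the claimed bound.

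For the first summand I would pull the slowly varying factors out of the $t$-integral. On $[\rm,\rM]$ one has $t^{d-1}\ge\rm^{d-1}$ (using $\rm>0$ from~\ref{assumption pcf rm}), so $1/(\sigma_d t^{d-1})\le 1/(\sigma_d \rm^{d-1})$, and $|s^T j_g(t)|\le|s|\,\|j_g\|_\infty$. The remaining kernel factor integrates to at most one: extending the range to $\R$ and substituting $u=(t-|x-y|)/b_n$ gives $\int_\rm^\rM b_n^{-1} k\big((t-|x-y|)/b_n\big)\,dt\le\int_\R k(u)\,du=1$ by~\ref{assumption pcf k}. The only remaining quantity is the normalising volume $|D_n\cap D_n^{x-y}|$, which must be bounded below by $|D_n^{\circleddash \rM+T}|$; this is the key geometric step and the one non-routine point of the proof. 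If $|x-y|\le\rM+T$, then for any $u$ with $B(u,\rM+T)\subset D_n$ one has $u\in D_n$, and, since $D_n$ is compact hence closed, $\overline B(u,\rM+T)\subset D_n$, so that $u+(x-y)\in\overline B(u,\rM+T)\subset D_n$ gives $u\in D_n^{x-y}$. Thus $D_n^{\circleddash \rM+T}\subset D_n\cap D_n^{x-y}$ and $|D_n\cap D_n^{x-y}|\ge|D_n^{\circleddash \rM+T}|$. Combining these estimates bounds the first summand by $|s|\,\|j_g\|_\infty\,\1_{\{x\in D_n\}}/(\sigma_d\rm^{d-1}\,|D_n^{\circleddash \rM+T}|)$ on $\{0<|x-y|\le\rM+T\}$.

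For the second summand the estimate is more direct: using $g(t,\theta_0)\le\|g\|_\infty$, $|s^T j_g(t)|\le|s|\,\|j_g\|_\infty$ and integrating over $[\rm,\rM]$ produces a factor $\|j_g\|_\infty(\rM-\rm)$, while $|D_n^{\circleddash \rM+T}|\le|D_n|=\Dn$ gives $1/\Dn\le 1/|D_n^{\circleddash \rM+T}|$; this bounds it by $2\rho_0\|g\|_\infty|s|\,\|j_g\|_\infty(\rM-\rm)\,\1_{\{x\in D_n\}}\1_{\{x-y=0\}}/|D_n^{\circleddash \rM+T}|$. Taking $M:=\|j_g\|_\infty\max(1,\rM-\rm)$, or any larger constant, absorbs the constants appearing in both summands and yields exactly the asserted inequality. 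Everything besides the geometric inclusion $D_n^{\circleddash \rM+T}\subset D_n\cap D_n^{x-y}$ is a matter of bounding bounded factors and invoking $\int_\R k=1$, so I expect that inclusion to be the only step requiring genuine care.
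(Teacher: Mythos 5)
Your proof is correct and follows essentially the same route as the paper's: the compact support of $k$ (with $b_n\le 1$ assumed without loss of generality) confines the off-diagonal term to $\lbrace 0<|x-y|\le \rM+T\rbrace$, $j_g$ is bounded via Lemma~\ref{lemma pcf bornitude}, $t^{d-1}\ge \rm^{d-1}$ handles the $\sigma_d t^{d-1}$ factor, and the kernel integral contributes a factor $b_n$ that cancels the $1/b_n$. The only difference is that the paper obtains the volume inequality $|D_n\cap D_n^{x-y}|\ge |D_n^{\circleddash \rM+T}|$ by citing Lemma~6.3 of \cite{bisciobrillingerTCL}, whereas you prove the underlying inclusion $D_n^{\circleddash \rM+T}\subset D_n\cap D_n^{x-y}$ directly from closedness of $D_n$ (and you also spell out the bound on the diagonal term, which the paper leaves implicit).
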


\begin{proof}
By \ref{assumption pcf k}, for any $t\in [\rm,\rM]$ and $(x,y) \in \R^{2d}$,
\begin{align*}
 \left|k\left(\frac{t-|x-y|}{b_n}\right) \right| \1_{\left\lbrace |y-x|>0,\,  y\in D_n\right\rbrace} &\leq \left|k\left(\frac{t-|x-y|}{b_n}\right) \right| \1_{\left\lbrace 0<|y-x|<t + T b_n\right\rbrace} \\
                                                                                     &\leq \left| k\left(\frac{t-|x-y|}{b_n}\right)\right| \1_{\left\lbrace 0<|y-x|<t+T\right\rbrace} 
\end{align*}
whenever $b_n<1$ which, by \ref{assumption pcf bn}, we assume in the following without loss of generality. Thus, for any $t\in [\rm,\rM]$ and $(x,y) \in \R^{2d}$,
\begin{align}\label{majoration support pcf}
 \left| k\left(\frac{t-|x-y|}{b_n}\right)\right| \1_{\left\lbrace |y-x|>0,\,  y\in D_n\right\rbrace}  \leq \left|k\left(\frac{t-|x-y|}{b_n}\right)\right| \1_{\left\lbrace 0<|y-x|<\rM+T\right\rbrace}.
\end{align}
Further, by Lemma~\ref{lemma pcf bornitude}, $j_g$ is bounded on $[\rm,\rM]$ by a constant $M$ so by~\eqref{majoration support pcf} and Lemma~6.3 in~\cite{bisciobrillingerTCL}, we have
\begin{multline*}
\left| 1_{\lbrace x\in D_n \rbrace }\int_\rm^\rM \frac{k\left( \frac{t-|x-y|}{b_n} \right) \1_{\lbrace|x-y|>0, y\in  D_n \rbrace} }{ \sigma_d t^{d-1}b_n |D_n \cap D_n^{x-y}|}    s^T j_g(t) dt \right|\\ 
\leq   \1_{\left\lbrace x\in D_n\right\rbrace} \frac{|s| M }{|D_n^{\circleddash \rM+T}|} \frac{ \1_{\left\lbrace 0< |x-y| \leq \rM+ T\right\rbrace }}{\sigma_d r^{d-1}_{min}  b_n}   \int_{\rm}^{\rM}   \left| k\left(\frac{t-|x-y|}{b_n}\right) \right| dt .
\end{multline*}
Finally, the result follows by the last inequality, \ref{assumption general Krho} and  \ref{assumption pcf k}.
\end{proof}

\begin{lemma}\label{lemma pcf TCL}
 If  \ref{assumption general domaine}-\ref{assumption general differentiabilty} and~\ref{assumption pcf rm}-\ref{assumption pcf differentiabilty} hold, then
   \begin{align*}
\sqrt{\Dn}\int_\rm^\rM  \left[ \gn(t) - g(t,\theta_0) \right] j_g(t)dt \convl \mathcal{N} (0,\Sigma_{\rho_0,\theta_0})                                                                                           \end{align*}
with $\Sigma_{\rho_0,\theta_0}$ defined as in Theorem~\ref{theorem pcf normalite asymptotique}.
\end{lemma}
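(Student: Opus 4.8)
The plan is to mimic closely the proof of Lemma~\ref{lemma ripley TCL}, substituting $\gn$, $g(\cdot,\theta_0)$ and $j_g$ for $\Kn$, $K(\cdot,\theta_0)$ and $j_K$. First I would multiply by $\rho_0^2$ and split, writing
\begin{multline*}
\rho_0^2 \sqrt{\Dn} \int_{\rm}^{\rM} \left[ \gn(t)-g(t,\theta_0) \right] j_g(t)\, dt = \sqrt{\Dn} \int_{\rm}^{\rM} \left[ \rho_0^2-\wrho^{\, 2} \right] \gn(t) j_g(t)\, dt \\ + \sqrt{\Dn} \int_{\rm}^{\rM} \left[ \wrho^{\, 2}\gn(t)-\rho_0^2 g(t,\theta_0) \right] j_g(t)\, dt,
\end{multline*}
then linearise $\rho_0^2-\wrho^{\, 2} = 2\rho_0(\rho_0-\wrho) + o(\rho_0-\wrho)$, which is valid almost surely since $\wrho \convPs \rho_0$ by ergodicity of the DPP. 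As in the Ripley case this produces three terms $A_n$, $B_n$, $C_n$, with $A_n = 2\rho_0\sqrt{\Dn}(\rho_0-\wrho)\int_\rm^\rM \gn(t) j_g(t)\, dt$, with $B_n$ the same quantity but with $\gn$ replaced by $\gn - g(\cdot,\theta_0)$, and with $C_n$ collecting the remaining contributions, so that the left-hand side equals $B_n + C_n + o(A_n)$.

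Next I would dispose of $B_n$ and $o(A_n)$. By Lemma~\ref{lemma conv univ pcf}, $\gn$ tends to $g(\cdot,\theta_0)$ in probability, uniformly on a subset of $[\rm,\rM]$ of full Lebesgue measure, so that $\int_\rm^\rM \gn(t) j_g(t)\, dt$ converges in probability to $\int_\rm^\rM g(t,\theta_0) j_g(t)\, dt$, which is finite because $j_g$ is continuous on $[\rm,\rM]$ by Lemma~\ref{lemma pcf bornitude}. Together with Corollary~\ref{corollary convergence normal intensite DPP}, which ensures that $\sqrt{\Dn}(\rho_0-\wrho)$ is asymptotically Gaussian and hence bounded in probability, Slutsky's theorem gives $B_n \convP 0$ and $o(A_n) \convP 0$.

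The core of the argument is to show $C_n \convl \mathcal{N}(0,\Sigma_{\rho_0,\theta_0})$ by the Cram\'er--Wold device. For $s \in \R^p$, using~\eqref{formula estimator pcf} and the definition of $H^g_n$, I would rewrite
\begin{align*}
s^T C_n = \sqrt{\Dn}\left( \sum_{(x,y)\in\X^2} f_{D_n}(x,y) + \int_\rm^\rM \rho_0^2\, g(t,\theta_0)\, s^T j_g(t)\, dt \right),
\end{align*}
with $f_{D_n}$ the function of Lemma~\ref{lemma majoration finale pour MCE g}. I would then compute $\E\big(\sum_{(x,y)\in\X^2} f_{D_n}(x,y)\big)$ via the Campbell formulas for the first and second order factorial moment measures of $\X$, recentre the sum at its true mean, identify the limiting variance $s^T \Sigma_{\rho_0,\theta_0} s$ from Lemma~\ref{lemma variance limite pcf}, and invoke the point-process central limit theorem (Theorem~\ref{TCL jolivet modifie}), whose compact-support and integrability hypotheses on $f_{D_n}$ are exactly furnished by Lemma~\ref{lemma majoration finale pour MCE g}. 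This gives $s^T C_n \convl \mathcal{N}(0, s^T \Sigma_{\rho_0,\theta_0} s)$ for every $s$, hence $C_n \convl \mathcal{N}(0,\Sigma_{\rho_0,\theta_0})$, and the proof is concluded by Slutsky's theorem.

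The delicate point, and the genuine difference from the Ripley case where $\wrho^{\, 2}\Kn$ is an exactly unbiased estimator of $\rho_0^2 K$, is that the kernel estimator $\gn$ carries a smoothing bias. Consequently $\E\big(\sum_{(x,y)\in\X^2} f_{D_n}(x,y)\big)$ equals $-\int_\rm^\rM \rho_0^2 g(t,\theta_0) s^T j_g(t)\, dt$ only up to a term arising from the convolution of $g(\cdot,\theta_0)$ with the kernel $k$; since $k$ is symmetric by~\ref{assumption pcf k} and $g(\cdot,\theta_0)$ is $\mathcal{C}^2$ away from the origin by~\ref{assumption pcf differentiabilty}, this bias is of order $b_n^2$. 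Multiplied by the normalisation $\sqrt{\Dn}$ it becomes of order $\sqrt{\Dn\, b_n^4}$, which vanishes thanks to $b_n^4 \Dn \to 0$ in~\ref{assumption pcf bn}; this is precisely where that bandwidth condition is needed. Controlling this bias, and verifying that $f_{D_n}$ meets the requirements of the point-process CLT, is the main obstacle, whereas the variance computation, though lengthy, is routine given Lemma~\ref{lemma variance limite pcf}.
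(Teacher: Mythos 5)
Your proposal is correct and follows essentially the same route as the paper's proof: your decomposition $B_n + C_n + o(A_n)$ with $C_n$ recentred at its expectation is algebraically identical to the paper's $U_n + V_n + W_n + o(T_n)$, since your $C_n$ equals the paper's $V_n + W_n$ (bias term plus centred term), and both invoke Lemma~\ref{lemma variance limite pcf}, Lemma~\ref{lemma majoration finale pour MCE g} and Theorem~\ref{TCL jolivet modifie} via Cram\'er--Wold. In particular, your identification of the kernel-smoothing bias as $O(b_n^2)$ (symmetric kernel plus $\mathcal{C}^2$ regularity of $g$ away from $0$), annihilated by $\sqrt{\Dn}\,b_n^2 \to 0$ from \ref{assumption pcf bn}, is exactly how the paper disposes of its bias term $V_n$, there justified by citing Lemma~6.2 of the Brillinger-mixing reference.
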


\begin{proof}
The arguments of this proof are similar the the ones of the proof of Lemma~\ref{lemma ripley TCL}. Notice that
\begin{multline}\label{egalite tcl pcf 1}
\rho_0^2 \sqrt{\Dn} \int_{\rm}^{\rM} \left[ \gn(t)- g(t,\theta_0) \right] j_g(t) dt =  \sqrt{\Dn} \left( \left[ \rho_0^2-\wrho^{\, 2} \right] \int_{\rm}^{\rM}  \gn(t) j_g(t) dt \right. \\\left. +   \int_{\rm}^{\rM} \left[ \wrho^{\, 2}\gn(t)- \E\left[\wrho^{\, 2}\gn(t)\right] \right] j_g(t) dt + \int_{\rm}^{\rM} \left[ \E\left[\wrho^{\, 2}\gn(t)\right]-  \rho_0^2 g(t,\theta_0) \right] j_g(t) dt \right)
\end{multline}
and
 \begin{multline}\label{terme negligeable preuve tcl pcf}
  \sqrt{\Dn} \left[ \rho_0-\wrho \right] \int_{\rm}^{\rM}  \gn(t) j_g(t)  dt =  \\ \sqrt{\Dn} \left[ \rho_0-\wrho \right] \int_{\rm}^{\rM}  \left[\gn(t) - g(t,\theta_0) \right] j_g(t) dt  +    \sqrt{\Dn}\left[ \rho_0-\wrho \right]\int_{\rm}^{\rM}  g(t,\theta_0) j_g(t) dt.
 \end{multline}
Denote 
\begin{align*}
 T_n &=  2\rho_0 \sqrt{\Dn}\left[ \rho_0-\wrho \right] \int_{\rm}^{\rM}  \gn(t) j_g(t) dt\\
 U_n &=  2\rho_0 \sqrt{\Dn}\left[ \rho_0-\wrho \right]  \int_{\rm}^{\rM} \left[\gn(t) - g(t,\theta_0) \right] j_g(t) dt\\
 V_n &=  \sqrt{\Dn} \int_{\rm}^{\rM} \left[ \E\left[\wrho^{\, 2}\gn(t)\right]-  \rho_0^2 g(t,\theta_0) \right] j_g(t) dt \\
 W_n &=  \sqrt{\Dn} \int_{\rm}^{\rM} \left[ \wrho^{\, 2}\gn(t)- 2\rho_0 \wrho g(t,\theta_0) - \left(\E\left[ \wrho^{\, 2}\gn(t)\right] - 2\rho_0^2 g(t,\theta_0) \right) \right] j_g(t) dt .
\end{align*}
Using~\eqref{taylor sur intensite ripley} in the proof of Lemma~\ref{lemma ripley TCL}, \eqref{egalite tcl pcf 1} and \eqref{terme negligeable preuve tcl pcf}, we get
\begin{align}\label{egalite tcl pcf short}
  \rho_0^2 &\sqrt{\Dn} \int_{\rm}^{\rM} \left[ \gn(t)-g(t,\theta_0) \right] j_g(t) dt = U_n + V_n + W_n +o\left( T_n \right).
\end{align}
We prove that $U_n+V_n + o(T_n)$ tends in probability to $0$ and we conclude by proving that $W_n$ tends in distribution to a Gaussian variable.
From Corollary~\ref{corollary convergence normal intensite DPP}, Lemmas~\ref{lemma conv univ pcf}-\ref{lemma pcf bornitude}  and  Slutsky's theorem, we have $U_n \convP 0$. Further, since $g$ is continuous on $[\rm,\rM]$ so bounded, we have by Lemma~\ref{lemma conv univ pcf} that $\gn$ is uniformly bounded in probability on $[\rm,\rM]$, see \cite[Prohorov's theorem]{vandervaart}. Thus, by Corollary~\ref{corollary convergence normal intensite DPP} and Lemma~\ref{lemma pcf bornitude}, $o(T_n) \convP 0$. Further, under \ref{assumption general domaine}-\ref{assumption general Krho} and \ref{assumption pcf rm}-\ref{assumption pcf differentiabilty}, we deduce from  Lemma 6.2 in~\cite{bisciobrillingerTCL}  that 
$\sup_{t\in[\rm,\rM]} \left(\E\left[\wrho^{\, 2}\gn(t)\right]-  \rho_0^2 g(t,\theta_0) \right)<\kappa b_n^2$ with $\kappa>0$, which combined with \ref{assumption pcf bn}  and  Lemma~\ref{lemma pcf bornitude} proves that $V_n \convP 0$. \medskip

We prove the  convergence in distribution of $W_n$ by the Cramer-Wold device. To shorten, denote for all $n\in \N$ and $s\in \R^p$,
\begin{align*}
 X^s_n := \int_{\rm}^{\rM} H_n^g(t) s^T j_g(t) dt.
\end{align*}
By Lemma~\ref{lemma pcf bornitude}, $j_g$ is  bounded on $[\rm,\rM]$ by a constant $M$. Then, since for all $t\in[\rm,\rM]$ and $n\in \N$,
\begin{align*}
 H_n^g(t) = \wrho^{\, 2} \gn(t)- \rho_0^2 g(t,\theta_0)  + (\rho_0 - \wrho) \rho_0 g(t,\theta_0) - \rho_0 \wrho g(t,\theta_0),
\end{align*}
we have
\begin{multline}\label{inegalite pour TCD Hng}
\E \left( \int_{\rm}^{\rM} \left| H_n^g(t) s^T j_g(t) \right| dt \right) \leq  |s| M \E \int_\rm^\rM    \left(\left| \wrho^{\, 2} \gn(t)- \rho_0^2 g(t,\theta_0) \right|\right) dt \\ +|s| M \left[ \E(| \rho_0 - \wrho |) + \E(\wrho) \right] \int_\rm^\rM |\rho_0 g(t,\theta_0)|  dt .
\end{multline}
By~\ref{assumption general differentiabilty}, $g(.,\theta_0)$ is bounded on $[\rm,\rM]$. Denote $||g||_\infty$ its maximum so by Cauchy Schwartz inequality, Jensen inequality and~\eqref{inegalite pour TCD Hng}, we have
\begin{multline}\label{inegalite pour TCD Hng 2}
 \int_{\rm}^{\rM}  \E\left| H_n^g(t) s^T j_g(t) \right| dt  \leq  |s| M (\rM-\rm)^{\frac{1}{2}} \Big( \E \int_\rm^\rM   \left( \wrho^{\, 2} \gn(t)- \rho_0^2 g(t,\theta_0)  \right)^2 dt \Big)^{\frac{1}{2}} \\ + |s|M(\rM-\rm)\rho_0 ||g||_\infty\left(  \E(| \rho_0 - \wrho |) + \E(\wrho) \right).
\end{multline}
By the same arguments as in the proof of Lemma~\ref{lemma conv univ pcf}, we have 
\begin{multline*}
 \E\left[ \int_\rm^\rM \left(\wrho^{\, 2} \gn(t) - \rho_0^2 g(t,\theta_0) \right)^2 dt \right] \\ = \frac{2 \rho_0^2}{b_n \Dn} \int_\rm^\rM \frac{g(t,\theta_0)}{ \sigma_d t^{d-1} } dt \int_\R k(x)^2 dx + O\left(\frac{1}{\Dn}\right) + O(b_n^4).
\end{multline*}
Thus by~\ref{assumption pcf bn}, $\E \left(\int_\rm^\rM \left(\wrho^{\, 2} \gn(t) - \rho_0^2 g(t,\theta_0) \right)^2 dt\right)$ tends to $0$. Moreover, as noticed in~\cite{heinrich1992minimum}, $\wrho$ converge in $L^1$ to $\rho_0$ so  $\E(| \rho_0 - \wrho |) + \E(\wrho)$ converges to $\rho_0$.  Hence, by~\eqref{inegalite pour TCD Hng}-\eqref{inegalite pour TCD Hng 2}, $ \E \int_{\rm}^{\rM} \left| H_n^g(t) s^T j_g(t) \right| dt$ is bounded. Then, by Fubini theorem,
\begin{align*}
 \E( X_n^s ) = \int_{\rm}^{\rM} \E \left( H_n^g(t) \right) s^T j_g(t) dt
\end{align*}
which implies that
\begin{align*}
 s^T W_n = \sqrt{\Dn} \left(X_n^s - \E( X_n^s) \right).
\end{align*}
By~\eqref{formula estimator pcf}, we have
\begin{align}\label{autre exression fonctionnelle pcf}
X^s_n = \sum_{(x,y) \in \X^2} f_{D_n} (x,y),
\end{align}
where $f_{D_n}(x,y)$ is given in Lemma~\ref{lemma majoration finale pour MCE g} and satisfies
\begin{align} \label{majoration fDn pcf lemma final}
 \left| f_{D_n}(x,y) \right| \leq \frac{|s| M\1_{\left\lbrace x\in D_n\right\rbrace}}{|D_n^{\circleddash \rM+T}|}  \left(  \frac{1}{\sigma_d r^{d-1}_{min}} \1_{\left\lbrace 0< |x-y| \leq \rM+ T\right\rbrace} + 2\rho_0 ||g||_{\infty} \1_{\lbrace x-y=0 \rbrace}  \right).
\end{align}
The right-hand term in~\eqref{majoration fDn pcf lemma final} is bounded and compactly supported. 
Therefore, by Lemma~\ref{lemma variance limite pcf} and Theorem~\ref{TCL jolivet modifie}, we have for all $s\in\R^p$ 
\begin{align*}
\sqrt{\Dn} \left(X_n^s - \E( X_n^s) \right) \convl  N(0, s^T\Sigma_{\rho_0,\theta_0} s ),
\end{align*}
which implies that $W_n \convl N(0, \Sigma_{\rho_0,\theta_0})$.
\end{proof}

  \section{Appendix}

\subsection{A general result for minimum contrast estimation}\label{section general result minimum}

We present in this section two general theorems concerning the consistency and asymptotic normality of the estimator defined  in~\eqref{definition thetan}. 
 Contrary to the results in Sections~\ref{section theorem ripley}-\ref{section theoreme pcf}, these theorems hold for an arbitrary stationary point process and an arbitrary statistic $J$, generalizing a study by~\cite{GuanSherman:07}. The results of Sections~\ref{section theorem ripley}-\ref{section theoreme pcf} are in fact consequences in the particular case of a DPP and $J=K$ or $J=g$, which simplifies the general assumptions below.

Let $\X$ be a stationary point process belonging to a parametric family indexed by, among possibly other parameters, $\theta\in \Theta$ where $\Theta\subset \R^p$, for a given $p\geq 1$. For any $t\in [\rm,\rM]$,  let $J(t,\theta)$ be  any real valued summary statistic of $\X$ that depends on $\theta$ (specific assumptions on $J$ are listed below). For any $t\in[\rm,\rM]$, let $\Jn(t)$ be an estimator of $J(t,\theta_0)$ where $\theta_0$ is the true parameter ruling the distribution of $\X$.
We denote by $J^{(1)}(t,\theta)$ and $J^{(2)}(t,\theta)$ the gradient, respectively the Hessian matrix, of  $J(t,\theta)$ with respect to $\theta$. Define for all $\theta \in \Theta$, 
\begin{align}\label{definition B}
 B(\theta):= \int_\rm^\rM w(t) J(t,\theta)^{2c-2} J^{(1)}(t,\theta) J^{(1)}(t,\theta)^T dt,
\end{align}
and for all $t\in[\rm,\rM]$,
\begin{align*}
 j(t)=w(t) J(t,\theta_0)^{2c-2} J^{(1)}(t,\theta_0).
\end{align*}
We consider the following assumptions.
\begin{enumerate}[label=($\mathcal{A}$\arabic*)]
 \item   $\Theta$ is a compact set with non-empty interior, $0\leq \rm < \rM$, $c\neq 0$ and $\left\lbrace D_n \right\rbrace_{ n\in \N }$ is a regular sequence of subsets of $\R^d$ in the sense of Definition~\ref{definition regular set}. \label{assumption Theta compact et Dn regular} 
 
 \item  $w$ is a positive and integrable function in $[\rm,\rM]$. \label{assumption w}
  
 \item  $J(.,.)$ and $J(.,. )^c$ are well defined continuous functions on $[\rm,\rM] \times \Theta$. Moreover, there exists a set $A\in [\rm,\rM]$ such that $[ \rm, \rM]\setminus A$ is of Lebesgue measure null and for all $t\in A$,  $\theta\in \Theta$, we have $J(t,\theta)>0$.
 \label{assumption continuite de J et param c}

 \item   
 There exists $n_0\in\N$ such that for all $n\geq n_0$, $\Jn(.)$ and $\Jn(.)^c$  are almost surely bounded on $[\rm,\rM]$.
 \label{assumption Jn borne positive}
 \item  There exists a set $A\in [\rm,\rM]$ such that $[ \rm, \rM]\setminus A$ is of Lebesgue measure null and 
 \begin{align*}
 \sup_{ t \in A} \left| \Jn(t) - J(t,\theta_0) \right| \convP 0.
 \end{align*}\label{assumption convergence uniforme}\vspace*{-0.6cm}
   
   \item  For $\theta_1 \neq \theta_2$, there exists a set $A$ of positive Lebesgue measure such that
\begin{align*}
J(t,\theta_1) \neq J(t,\theta_2) , \quad \forall t\in A.
\end{align*} \label{assumption identifiability}\vspace*{-0.8cm}
  
   \item For all $t \in [ \rm, \rM]$, $J^{(1)}(t,\theta)$ and $J^{(2)}(t,\theta)$ exist, are continuous with respect to $\theta$ and uniformly bounded with respect to $t\in[\rm,\rM]$ and $\theta\in\Theta$. \label{assumption J derivable 2}
 
 \item There exists $M>0$ such that for all $(t,\theta) \in [\rm,\rM] \times \Theta$ and $a\in\lbrace c-2, 2c-2\rbrace$, $\big| J(t,\theta) \big| ^a  \leq M $. 
\label{assumption integrability pour B}
 
 \item The matrix $B(\theta_0)$ is invertible. \label{assumption B inversible}

\item[$(\mathcal{TCL})$] There exists $m\in \R$ and a covariance matrix $\Sigma$ such that
  \begin{align*}
\sqrt{\Dn}\int_\rm^\rM  \left[ \Jn(t) - J(t,\theta_0) \right] j(t)dt \convl \mathcal{N} (m,\Sigma).                                                                                   \end{align*}
\end{enumerate} 
Further, define $(\mathcal{A}5)'$ as the assumption \ref{assumption convergence uniforme} with the convergence in probability replaced by the almost sure convergence.

\begin{theorem}\label{theoreme consistency}
Let $\X$ be a stationary point process with distribution ruled by a given $\theta_0$, assumed to be an interior point of $\Theta$. For all $n\in \N$, let $U_n$ be defined as in~\eqref{expression Un discrepancy measure}. Assume that\ref{assumption Theta compact et Dn regular}-\ref{assumption identifiability} hold. Then, the minimum contrast estimator $\ttheta$ defined by
\begin{align}\label{definition thetatilde}
 \ttheta = \argmin_{\theta \in \Theta}  U_n(\theta)
\end{align}
exists almost surely, is consistent for $\theta_0$ and strongly consistent if $(\mathcal{A}5)'$ holds. 
\end{theorem}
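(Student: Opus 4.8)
The plan is to follow the classical route for consistency of minimum contrast (M-)estimators: isolate a deterministic limiting contrast, show it is uniquely and well-separatedly minimised at $\theta_0$, establish uniform (in $\theta$) convergence of $U_n$ towards this limit, and then transfer that uniform convergence to the minimisers. First I would introduce the candidate limit
\begin{align*}
 U(\theta) := \int_\rm^\rM w(t)\left\{ J(t,\theta_0)^c - J(t,\theta)^c \right\}^2 dt, \qquad \theta\in\Theta.
\end{align*}
By $(\mathcal{A}2)$ the weight $w$ is integrable, and by $(\mathcal{A}3)$ the functions $J(\cdot,\cdot)$ and $J(\cdot,\cdot)^c$ are continuous, hence bounded, on the compact $[\rm,\rM]\times\Theta$; a dominated convergence argument then shows that $U$ is finite and continuous on $\Theta$. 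Plainly $U(\theta_0)=0$ and $U\geq 0$. For the uniqueness, I would use that $x\mapsto x^c$ is injective on $(0,\infty)$ whenever $c\neq 0$ (guaranteed by $(\mathcal{A}1)$): combined with the positivity in $(\mathcal{A}3)$, the identifiability assumption $(\mathcal{A}6)$ gives, for every $\theta\neq\theta_0$, a set of positive Lebesgue measure on which $J(t,\theta)^c\neq J(t,\theta_0)^c$, whence $U(\theta)>0$. Compactness of $\Theta$ upgrades this to a well-separated minimum: for each $\epsilon>0$, the infimum of the continuous function $U$ over the compact set $\{\theta\in\Theta:\ |\theta-\theta_0|\geq\epsilon\}$ is attained and strictly positive, so $\inf_{|\theta-\theta_0|\geq\epsilon}U(\theta)>0=U(\theta_0)$.

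For the existence of $\ttheta$, I would verify that $\theta\mapsto U_n(\theta)$ is almost surely continuous on the compact set $\Theta$, so that its infimum in~\eqref{definition thetatilde} is attained. Indeed, for $n\geq n_0$ the integrand $w(t)\{\Jn(t)^c-J(t,\theta)^c\}^2$ is, by $(\mathcal{A}4)$ and $(\mathcal{A}3)$, almost surely bounded by an integrable function not depending on $\theta$, while $\theta\mapsto J(t,\theta)^c$ is continuous; dominated convergence then yields continuity of $U_n$, and compactness of $\Theta$ produces the minimiser $\ttheta$ almost surely.

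The core step is the uniform convergence $\sup_{\theta\in\Theta}|U_n(\theta)-U(\theta)|\convP 0$. I would start from the elementary factorisation of a difference of squares,
\begin{align*}
 U_n(\theta)-U(\theta)=\int_\rm^\rM w(t)\left(\Jn(t)^c-J(t,\theta_0)^c\right)\left(\Jn(t)^c+J(t,\theta_0)^c-2J(t,\theta)^c\right)dt.
\end{align*}
The second factor is bounded uniformly in $\theta$ by means of $(\mathcal{A}3)$ and $(\mathcal{A}4)$, so that $\sup_\theta|U_n(\theta)-U(\theta)|$ is controlled by a bounded (random) constant times $\int_\rm^\rM w(t)\,|\Jn(t)^c-J(t,\theta_0)^c|\,dt$, and it remains to show that this integral tends to $0$ in probability. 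Here is where I expect the main obstacle: one must transfer the uniform convergence $(\mathcal{A}5)$ of $\Jn$ to $J(\cdot,\theta_0)$ through the nonlinearity $x\mapsto x^c$ and then into an $L^1(w\,dt)$ statement. I would argue along subsequences, extracting from $(\mathcal{A}5)$ a further subsequence with $\sup_{t\in A}|\Jn(t)-J(t,\theta_0)|\convPs 0$; on the corresponding almost sure event, positivity of $J(t,\theta_0)$ from $(\mathcal{A}3)$ gives $\Jn(t)^c\to J(t,\theta_0)^c$ for almost every $t$, and the boundedness postulated in $(\mathcal{A}4)$ together with $(\mathcal{A}3)$ supplies, for $n$ large, an integrable envelope $w(t)\times(\text{const})$ allowing dominated convergence. (The delicate points are precisely the behaviour of $x\mapsto x^c$ when $c<0$ or near $t=\rm=0$, which is exactly what the positivity of $(\mathcal{A}3)$ and the boundedness of $(\mathcal{A}4)$ are there to tame.)

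Finally I would assemble the pieces in the standard manner. Writing
\begin{align*}
 0\leq U(\ttheta)-U(\theta_0)=\bigl(U(\ttheta)-U_n(\ttheta)\bigr)+\bigl(U_n(\ttheta)-U_n(\theta_0)\bigr)+\bigl(U_n(\theta_0)-U(\theta_0)\bigr),
\end{align*}
the middle bracket is $\leq 0$ because $\ttheta$ minimises $U_n$, while the two outer brackets are each dominated by $\sup_\theta|U_n(\theta)-U(\theta)|\convP 0$; hence $U(\ttheta)\convP U(\theta_0)=0$. The well-separation of the minimum then forces $\ttheta\convP\theta_0$, which is the consistency. Under the reinforced hypothesis $(\mathcal{A}5)'$, every convergence in probability above becomes almost sure, and the same chain yields the strong consistency of $\ttheta$.
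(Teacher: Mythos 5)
Your proposal is correct, and at its core it runs on the same engine as the paper's proof, but it is organized genuinely differently in one respect: you prove a uniform law of large numbers for the criterion, which the paper never does. You introduce the limiting contrast $U(\theta)=\int_{\rm}^{\rM}w(t)\{J(t,\theta_0)^c-J(t,\theta)^c\}^2dt$, establish $\sup_{\theta\in\Theta}|U_n(\theta)-U(\theta)|\convP 0$ via the difference-of-squares factorization, and conclude with the classical sandwich and well-separation arguments. The paper instead sets $U_n^*(\theta)=U_n(\theta)-U_n(\theta_0)$, uses $U_n^*(\ttheta)\le U_n^*(\theta_0)=0$, and reads off directly
\begin{align*}
\int_{\rm}^{\rM}w(t)\big[J(t,\theta_0)^c-J(t,\ttheta)^c\big]^2dt\le 2\int_{\rm}^{\rM}w(t)\big|\Jn(t)^c-J(t,\theta_0)^c\big|\,\big|J(t,\theta_0)^c-J(t,\ttheta)^c\big|dt,
\end{align*}
whose left-hand side is exactly your $U(\ttheta)$; bounding the second factor and sending the first to zero is the same reduction you perform, so both arguments hinge on the identical quantity $\int_{\rm}^{\rM}w(t)|\Jn(t)^c-J(t,\theta_0)^c|dt\convP 0$, controlled by ($\mathcal{A}$3)--($\mathcal{A}$5), with ($\mathcal{A}$2) giving integrability of $w$. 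The paper's version is thus a short-circuit of yours: it trades the uniform law plus sandwich for a single inequality exploiting the minimizing property, which is slightly more economical. Conversely, your write-up makes explicit two points the paper compresses into single sentences: the well-separation step (the paper's conclusion that consistency follows from ($\mathcal{A}$2) and ($\mathcal{A}$6) silently uses exactly your compactness-plus-continuity argument for the limit contrast), and the subsequence device for pushing the in-probability convergence ($\mathcal{A}$5) through the nonlinearity $x\mapsto x^c$ before applying dominated convergence; the delicate behaviour of $x\mapsto x^c$ that you flag (for $c<0$, or near $t=0$ when $\rm=0$) is handled in the paper at the same level of terseness as in your proposal, so this is not a gap relative to the paper. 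The existence and strong-consistency parts of your proposal coincide with the paper's.
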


\begin{proof}
For a sequence $\lbrace \theta_m\rbrace_{m\in \N}$ belonging to $\Theta$, we have for all $n\in\N$,
\begin{multline}\label{majoration pour continuite Un}
 \left| U_n(\theta_m) - U_n( \theta ) \right| \leq \\ \int_{\rm}^\rM |w(t)|\left( \big| 2\Jn(t)^c \big|  \left| J(t,\theta_m)^c-J(t,\theta)^c\right| + \left|J(t,\theta_m)^{2c} - J(t,\theta)^{2c}\right| \right) dt.
\end{multline}
Denote $A$ the intersection of the sets defined in  \ref{assumption continuite de J et param c} and \ref{assumption convergence uniforme}. By \ref{assumption continuite de J et param c},  $J(.,.)^c$ is continuous on $[\rm,\rM] \times \Theta$ which is compact by \ref{assumption Theta compact et Dn regular}. We deduce that
\begin{align*}
 \sup_{t \in [\rm,\rM]} \left| J(t,\theta_m)^c - J(t,\theta)^c \right| \leq K.
\end{align*}
By \ref{assumption continuite de J et param c}-\ref{assumption Jn borne positive}, for all $\theta\in \Theta$, $J(t,\theta)^c$ and $\Jn(t)^c$ are almost surely bounded on $[\rm,\rM]$, for all $n$ large enough. Further, by \ref{assumption w}, $w$ is integrable on $[\rm,\rM]$ thus, by \eqref{majoration pour continuite Un} and the dominated convergence theorem, we have the convergence
\begin{align*}
  \left| U_n(\theta_m) - U_n( \theta ) \right| \xrightarrow[\theta_m \rightarrow \theta]{a.s.}  0.
\end{align*}
Therefore, for all $n$ large enough, $U_n$ is almost surely continuous so the almost sure existence of $\ttheta$ follows by \ref{assumption Theta compact et Dn regular}.
Define for all $\theta \in \Theta$,
\begin{align}\label{definition U* en fonction de U}
 U_n^*(\theta)= U_n(\theta) - U_n(\theta_0).
\end{align}
By \eqref{expression Un discrepancy measure} and \eqref{definition U* en fonction de U},
\begin{multline*}
 U_n^*(\theta) =  2 \int_\rm^\rM w(t) \big[\Jn(t)^c-J(t,\theta_0)^c  \big]   \big[J(t,\theta_0)^c - J(t,\theta)^c \big] dt \\ + \int_\rm^\rM w(t) \big[J(t,\theta_0)^c - J(t,\theta)^c \big]^2 dt.
\end{multline*}
Note that from~\eqref{definition U* en fonction de U} $U_n^*( \ttheta ) \leq  U_n^*(\theta_0) = 0$, so
\begin{align}\label{majoration pour consistency}
 \int_\rm^\rM w(t) \big[J(t,&\theta_0)^c - J(t,\ttheta)^c \big]^2 dt \notag\\ 
 &\leq 2 \int_\rm^\rM w(t) \big|\Jn(t)^c-J(t,\theta_0)^c  \big|   \big|J(t,\theta_0)^c - J(t,\ttheta)^c \big| dt.
\end{align}
By \ref{assumption continuite de J et param c}-\ref{assumption Jn borne positive}, $J(.,.)^c$  is continuous on $[\rm,\rM] \times \Theta$ and for $n$ large enough, $\Jn(.)^c$ is almost surely bounded on $[\rm,\rM]$ so by~\ref{assumption convergence uniforme}, the right-hand term in~\eqref{majoration pour consistency} tends in probability to $0$. Hence, we have
\begin{align*}
  \int_\rm^\rM w(t) \big[J(t,&\theta_0)^c - J(t,\ttheta)^c \big]^2 dt \convP 0.
\end{align*}
It follows by \ref{assumption w} and \ref{assumption identifiability} that $\ttheta$ converges in probability to $\theta_0$. Finally, by a similar argument, we prove by~\eqref{majoration pour consistency} that this last convergence is almost sure if $(\mathcal{A}5)'$ holds.
\end{proof}

\begin{theorem}\label{theoreme normalite asymptotique}
Under the same setting as in Theorem~\ref{theoreme consistency}, if in addition \ref{assumption J derivable 2}-\ref{assumption B inversible} and $(\mathcal{TCL})$ hold true, then 
\begin{align*}
 \sqrt{\Dn}(\ttheta -\theta_0) \convl  \mathcal{N} \left( m,B(\theta_0)^{-1} \Sigma \left( B(\theta_0)^{-1}  \right)^T \right)
\end{align*}
where $B$ is defined as in~\eqref{definition B} and $\Sigma$ comes from $\mathcal{(TCL)}$.
\end{theorem}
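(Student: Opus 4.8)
The plan is to treat $\ttheta$ as a classical M-estimator: expand the gradient of $U_n$ about $\theta_0$, identify the random gradient with the quantity controlled by $(\mathcal{TCL})$ and the Hessian with (a multiple of) $B(\theta_0)$, and finish by Slutsky. First I would invoke Theorem~\ref{theoreme consistency}: $\ttheta\convP\theta_0$, and since $\theta_0$ is interior to $\Theta$, with probability tending to one $\ttheta$ is an interior minimiser, so the first-order condition $U_n^{(1)}(\ttheta)=0$ holds, where $U_n^{(1)}$ is the gradient in $\theta$. Differentiating \eqref{expression Un discrepancy measure} under the integral sign, justified by \ref{assumption w}, \ref{assumption J derivable 2}, \ref{assumption integrability pour B}, \ref{assumption Jn borne positive} and dominated convergence, gives
\[
 U_n^{(1)}(\theta) = -2c\int_\rm^\rM w(t)\bigl[\Jn(t)^c-J(t,\theta)^c\bigr]J(t,\theta)^{c-1}J^{(1)}(t,\theta)\,dt,
\]
and a $C^2$ expression for the Hessian $U_n^{(2)}$. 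A component-wise mean value expansion of $U_n^{(1)}$ about $\theta_0$ then yields
\[
 \sqrt{\Dn}\,(\ttheta-\theta_0) = -\bigl[U_n^{(2)}(\bar\theta_n)\bigr]^{-1}\,\sqrt{\Dn}\,U_n^{(1)}(\theta_0),
\]
with $\bar\theta_n$ between $\ttheta$ and $\theta_0$, so that $\bar\theta_n\to\theta_0$. It remains to find the limits of the two factors.

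For the Hessian I would show $U_n^{(2)}(\bar\theta_n)\convP 2c^2 B(\theta_0)$: the contribution carrying the factor $\Jn^c-J^c$ is negligible by \ref{assumption convergence uniforme} together with the continuity and positivity of $J$ from \ref{assumption continuite de J et param c}, while the remaining term converges by continuity and the uniform bounds in \ref{assumption J derivable 2}--\ref{assumption integrability pour B} evaluated along $\bar\theta_n\to\theta_0$; the limit is invertible by \ref{assumption B inversible}. For the gradient the crux is to linearise $\Jn^c-J^c$. Writing $\Jn(t)^c-J(t,\theta_0)^c=\phi_n(t)\,[\Jn(t)-J(t,\theta_0)]$ with the divided difference $\phi_n$ satisfying $\phi_n\to c\,J(\cdot,\theta_0)^{c-1}$ uniformly (by \ref{assumption convergence uniforme} and positivity of $J$), I obtain
\[
 \sqrt{\Dn}\,U_n^{(1)}(\theta_0) = -2c^2\,\sqrt{\Dn}\int_\rm^\rM \bigl[\Jn(t)-J(t,\theta_0)\bigr]j(t)\,dt + r_n,
\]
whose main term converges to $\mathcal{N}(-2c^2 m,\,4c^4\Sigma)$ by $(\mathcal{TCL})$.

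The main obstacle is the remainder $r_n$, which obeys
\[
 |r_n| \le 2|c|\,\bigl\|\phi_n-c\,J(\cdot,\theta_0)^{c-1}\bigr\|_{\infty}\ \sqrt{\Dn}\int_\rm^\rM w(t)\,\bigl|\Jn(t)-J(t,\theta_0)\bigr|\,\bigl|J(t,\theta_0)^{c-1}J^{(1)}(t,\theta_0)\bigr|\,dt .
\]
The first factor tends to $0$ by \ref{assumption convergence uniforme}, and the integrand factor $|J^{c-1}J^{(1)}|$ is bounded by \ref{assumption continuite de J et param c}, \ref{assumption J derivable 2} and \ref{assumption integrability pour B}, so $r_n\convP 0$ \emph{provided} $\sqrt{\Dn}\int_\rm^\rM w\,|\Jn-J|\,dt=O_{\mathbb P}(1)$. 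This $L^1$-tightness of $\sqrt{\Dn}(\Jn-J)$ is exactly the delicate point: it does not come for free from \ref{assumption convergence uniforme} or $(\mathcal{TCL})$ alone but from the explicit second-moment control of $\Jn$, which in the DPP applications is supplied by the variance computations underlying $(\mathcal{TCL})$ and the Brillinger mixing property. Granting it, Slutsky's theorem combines the two limits into
\[
 \sqrt{\Dn}\,(\ttheta-\theta_0)\convl -\bigl(2c^2 B(\theta_0)\bigr)^{-1}\mathcal{N}\bigl(-2c^2 m,\,4c^4\Sigma\bigr)=\mathcal{N}\Bigl(B(\theta_0)^{-1}m,\ B(\theta_0)^{-1}\Sigma\,\bigl(B(\theta_0)^{-1}\bigr)^{T}\Bigr),
\]
the factors $c^2$ cancelling between gradient and Hessian, which is the asserted limit.
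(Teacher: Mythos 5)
Your proposal is correct and takes essentially the same route as the paper: first-order condition $U_n^{(1)}(\ttheta)=0$, componentwise mean value expansion, convergence of the Hessian to $2c^2B(\theta_0)$ via \ref{assumption convergence uniforme}, \ref{assumption J derivable 2}--\ref{assumption B inversible}, linearisation of $x\mapsto x^c$, then $(\mathcal{TCL})$ and Slutsky. Two remarks: the tightness of $\sqrt{\Dn}\int_\rm^\rM w(t)\,|\Jn(t)-J(t,\theta_0)|\,dt$ that you flag as the delicate point is not treated any more rigorously in the paper, whose proof absorbs this remainder into the notation $o(A_n(\theta_0))$ even though $A_n(\theta_0)$ is the \emph{signed} integral controlled by $(\mathcal{TCL})$ rather than the integral of the absolute value, so your proof is no less complete than the paper's (in the DPP applications the needed control does follow from the variance computations, as you note); and your limiting mean $B(\theta_0)^{-1}m$ is the correct image of $m$ under the final linear transformation, the paper's stated mean $m$ agreeing with it only in the relevant case $m=0$.
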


\begin{proof}
Denote by $A$ the intersection of the sets defined in \ref{assumption continuite de J et param c} and \ref{assumption convergence uniforme}. Then, by \ref{assumption continuite de J et param c},~\ref{assumption J derivable 2} and~\ref{assumption integrability pour B},  we see that $U_n$ is almost surely twice differentiable on $\Theta$ and that we can differentiate twice under the integral sign. Thus, by the mean value theorem, 
for all $j= 1,\ldots,p$, there exists $s\in (0,1)$ and  $\theta^*_j = \theta_0 +s (\ttheta-\theta_0)$ such that
\begin{align*} 
\partial_j U_n(\ttheta) - \partial_j U_n(\theta_0) = \left( \partial_{ij}^2 U_n(\theta^*_j)  \right)_{i=1,\ldots,p}   \left(\ttheta-\theta_0 \right).
\end{align*}
To shorten, denote by $U_n^{(1)}$ the gradient of $U_n$ and by $U_n^{(2)}(\theta_n^*)$ the matrix with entries  $\partial_{ij}^2 U_n(\theta^*_j)$. 
Since $U_n$ is minimal at $\ttheta$, $ U_n^{(1)}(\ttheta)=0$ and the last equation becomes
\begin{align}\label{norma asympt 1}
U_n^{(2)}(\theta_n^*) ( \ttheta-\theta_0 ) &= - U_n^{(1)}(\theta_0) \nonumber\\
& =  2c \int_\rm^\rM w(t) \big[ \Jn(t)^c - J(t,\theta_0)^c \big] J(t,\theta_0)^{c-1} J^{(1)}(t,\theta_0) dt.
\end{align}
Note that by~\ref{assumption continuite de J et param c} and~\ref{assumption integrability pour B}, $J(.,\theta_0)^{c-1}$ is bounded on $A$ and strictly positive. Thus, by  \ref{assumption Jn borne positive}, we can use the Taylor expansion of the function $x \mapsto x^c$ so, for all $t\in A$, 
\begin{align*} 
 \Jn(t)^c - J(t,\theta_0)^c = c J(t,\theta_0)^{c-1} \left(\Jn(t)-J(t,\theta_0) \right) + o \left( \Jn(t)-J(t,\theta_0) \right) .
\end{align*}
 Therefore, by~\ref{assumption convergence uniforme},~\eqref{norma asympt 1} and the last equation,
\begin{align}\label{egalite norm asympt 1}
\sqrt{\Dn}\, U_n^{(2)}(\theta_n^*) ( \ttheta - \theta_0)  =2 c^2 A_n(\theta_0) + o( A_n(\theta_0) )
\end{align}
where
\begin{align*}
  A_n(\theta_0)= \sqrt{\Dn} \int_A  \left[ \Jn(t) - J(t,\theta_0) \right] j(t) dt.
\end{align*}
By $(\mathcal{TCL})$, we have $2c^2 A_n(\theta_0) \convl 2c^2 N(m,  \Sigma)$. Hence, by Slutsky's theorem and~\eqref{egalite norm asympt 1}, 
\begin{align}\label{convergence part Un asympt}
 \sqrt{\Dn}\, U_n^{(2)}(\theta_n^*) ( \ttheta - \theta_0) \convl 2c^2 N(m,  \Sigma).
\end{align}
Moreover, we have that
\begin{align}\label{egalite B}
\, U_n^{(2)}(\theta_n^*) = 2c^2 B(\theta_n^*)- E_n
\end{align}
where $B$ is as in~\eqref{definition B} and 
\begin{multline*}
 E_n:= 2c(c-1)  \int_\rm^\rM w(t) \big[ \Jn(t)^c - J(t,\theta_n^*)^c \big] J(t,\theta_n^*)^{c-2} J^{(1)}(t,\theta_n^*) J^{(1)}(t,\theta_n^*)^T dt   \\ 
      +2c \int_\rm^\rM w(t)  \big[ \Jn(t)^c - J(t,\theta_n^*)^c \big] J(t,\theta_n^*)^{c-1} J^{(2)}(t,\theta_n^*) dt.
\end{multline*}
By Theorem~\ref{theoreme consistency}, $\ttheta \convP \theta _0$ so $\theta_n^* \convP\theta _0$. Then, by \ref{assumption J derivable 2}-\ref{assumption integrability pour B}, $E_n$ tends in probability to $0$. Note that by continuity of $J(.,\theta)$ for all $\theta \in \Theta$, the integrability  on $[\rm,\rM]$ of $w(.) J(.,\theta)^{c-2}$ implies the one of $w(.) J(.,\theta)^{c-1}$. 
Further, we deduce by \ref{assumption continuite de J et param c}, \ref{assumption J derivable 2} and \ref{assumption integrability pour B} that $(t,\theta) \mapsto J(t,\theta)^{2c-2} J^{(1)}(t,\theta)J^{(1)}(t,\theta)^{T}$ is continuous with respect to $\theta\in \Theta$ and uniformly bounded for $t \in [\rm,\rM]$. Thus, by the dominated convergence theorem,
\begin{align*}
 B(\theta_n^*)  \convP  B(\theta_0).
\end{align*}
By~\ref{assumption B inversible}, $B(\theta_0)$ is invertible so by~\eqref{egalite B} 
\begin{align}\label{convergence et inversibilite part Un}
 U_n^{(2)}(\theta_n^*) B(\theta_0)^{-1} \convP 2c^2 .
\end{align}
Since the group of invertible matrix is an open set, it follows from the last convergence that for $n$ large enough, $U_n^{(2)}(\theta_n^*)$ is invertible so we can write
\begin{align*}
 B(\theta_0) \sqrt{\Dn} (\ttheta-\theta_0) =  B(\theta_0) \left[ U_n^{(2)}(\theta_n^*)\right]^{-1} U_n^{(2)}(\theta_n^*) \sqrt{\Dn} (\ttheta-\theta_0).
\end{align*}
By~\eqref{convergence part Un asympt}-\eqref{convergence et inversibilite part Un} and Slutsky's theorem, we get
\begin{align*}
 B(\theta_0) \sqrt{\Dn}  ( \ttheta - \theta_0) \convl  N(m,  \Sigma)
\end{align*}
and the conclusion of the theorem follows.
\end{proof}

  \subsection{Auxiliary results}  
  The two following lemmas are of topological nature and useful for the proofs of Theorems~\ref{theorem ripley normalite asymptotic}-\ref{theorem pcf normalite asymptotique}.
  
  \begin{lemma}\label{lemma point dehors cpct}
 For all $p\geq 1$, let $\Xi$ be a compact convex set in $\R^p$. Then, for all $y\in \R^p \setminus \Xi$ and $\delta \geq 0$, $  B(y,\delta) \nsubseteq \Xi^{\oplus \delta}$.
\end{lemma}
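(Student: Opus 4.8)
The plan is to exhibit one explicit point of the ball around $y$ that lies outside $\Xi^{\oplus\delta}$, located via the metric projection of $y$ onto the compact convex set $\Xi$. First I would note that, since $\Xi$ is compact (hence closed) and convex while $y\notin\Xi$, there is a nearest point $x_0\in\Xi$ to $y$, and $d:=|y-x_0|=\mathrm{dist}(y,\Xi)>0$. Writing $u:=(y-x_0)/d$ for the outward unit direction, the one nontrivial ingredient is the variational characterisation of the projection onto a convex set: $\langle w-x_0,\,u\rangle\le 0$ for every $w\in\Xi$ (equivalently, the hyperplane through $x_0$ orthogonal to $u$ supports $\Xi$).

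Next I would set $z:=y+\delta u=x_0+(d+\delta)u$, so that $|z-y|=\delta$ and hence $z\in\overline B(y,\delta)$. To show $z\notin\Xi^{\oplus\delta}$ it suffices to check $\mathrm{dist}(z,\Xi)>\delta$, because $\Xi^{\oplus\delta}=\Xi+\overline B(0,\delta)$ coincides with $\{w:\mathrm{dist}(w,\Xi)\le\delta\}$, the infimum being attained since $\Xi$ is compact. For any $w\in\Xi$ I would expand
\[
|z-w|^2=|z-x_0|^2-2\langle z-x_0,\,w-x_0\rangle+|w-x_0|^2,
\]
and use $z-x_0=(d+\delta)u$ together with $\langle u,\,w-x_0\rangle\le 0$ to discard the nonnegative cross term, giving $|z-w|^2\ge(d+\delta)^2$. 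Taking the infimum over $w\in\Xi$ (with equality at $w=x_0$) yields $\mathrm{dist}(z,\Xi)=d+\delta$, and since $d>0$ this is strictly larger than $\delta$. Hence $z\in\overline B(y,\delta)\setminus\Xi^{\oplus\delta}$, proving $B(y,\delta)\nsubseteq\Xi^{\oplus\delta}$.

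The part that requires the most care is not any computation but the bookkeeping around the convention for $B(y,\delta)$ and the boundary case $\delta=0$. Reading $B(y,\delta)$ as the closed ball makes the chosen point $z=y+\delta u$ lie in $B(y,\delta)$ for every $\delta\ge0$, and at $\delta=0$ the claim collapses to the trivial fact $y\notin\Xi$. If instead $B(y,\delta)$ is the open ball, the same $z$ should be replaced by $y+su$ with $s<\delta$ taken close enough to $\delta$ that $d+s>\delta$ (possible because $d>0$), and the assertion then holds for every $\delta>0$.
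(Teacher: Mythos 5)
Your proof is correct and follows essentially the same route as the paper: both project $y$ onto $\Xi$, take the point of $\partial B(y,\delta)$ lying a further distance $\delta$ along the outward direction $u=(y-p_\Xi(y))/|y-p_\Xi(y)|$, and use the variational characterisation $\langle w-p_\Xi(y),u\rangle\le 0$ to conclude that this point is at distance exactly $\mathrm{dist}(y,\Xi)+\delta>\delta$ from $\Xi$. The only cosmetic difference is that the paper verifies that $p_\Xi(y)$ remains the projection of the new point, whereas you expand the squared distance directly; your remark on the open/closed ball convention is a sensible addition rather than a deviation.
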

\begin{proof}
 Since $\Xi$ is a closed convex set,  the projection of $y$ onto $\Xi$, denoted by $p_{\Xi}(y)$, is the unique element belonging to $\Xi$ that, for all $u\in \Xi$, verifies
 \begin{align}\label{caracterisation projete convexe ferme}
  (y-p_{\Xi}(y) ) . ( u-p_{\Xi}(y)) \leq 0.
 \end{align}
For all $\delta\geq 0$,  the line $(y,p_{\Xi}(y))$ intersects $\partial B(y,\delta)$ at two points, one inside the segment $[y,p_{\Xi}(y)]$ and the other, that we denote by $v$, outside the segment. Thus, there exists $t>1$ such that $v = p_{\Xi}(y) + t(y- p_{\Xi}(y) )$. Notice that for all $u\in \Xi$,
 \begin{align*}
  (v-p_{\Xi}(y) ) . ( u-p_{\Xi}(y)) = t  (y-p_{\Xi}(y) ) . ( u-p_{\Xi}(y)).
 \end{align*}
Thus, as $t> 1$,  we deduce from~\eqref{caracterisation projete convexe ferme} and the last equation that $p_{\Xi}(y)$ is the projection of $v$ onto $\Xi$. It follows that $d(v,\Xi) = d(y,\Xi) + \delta$ and  as $y\notin \Xi$ and $\Xi$ is closed,   $d(v,\Xi) > \delta$. Therefore,  $v\notin \Xi^{\oplus \delta}$ but $v\in \partial B(y,\delta)$ by construction so $B(y,\delta) \nsubseteq \Xi^{\oplus \delta}$.
\end{proof}

\begin{lemma}\label{lemma boule incluse dans grossisement}
 For $p\geq1$, let $\Theta$ be a convex compact set in $\R^p$ and $\lbrace \Theta_n \rbrace_{n\in\N}$ be a sequence of convex compact sets in $\R^p$ that converges to $\Theta$ with respect to the Hausdorff distance. Let $r\geq 0$ and $x$ be an interior point of $\Theta$ such that $B(x,r)$ belongs to the interior of $\Theta$. Then, there exists $N\in \N$ such that for all $n\geq N$,
 \begin{align*}
  B(x,r) \subset \Theta_n.
 \end{align*}
\end{lemma}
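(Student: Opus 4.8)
The plan is to pass from the two sets to their support functions and let convexity do the work. First I would extract the geometric margin hidden in the hypothesis. Since the ball $B(x,r)$ is a compact set contained in the open set $\mathrm{int}(\Theta)$, its distance to the closed complement $\R^p \setminus \mathrm{int}(\Theta)$ is strictly positive; hence there is $\delta>0$ with $\overline{B}(x,r+\delta)\subseteq\Theta$. This is the only step that uses the hypothesis that $B(x,r)$ sits in the interior of $\Theta$, and it is where a margin $\delta$ of room around the ball is manufactured.

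Next I would set up the support-function machinery. For a convex compact set $K$, write $h_K(u)=\sup_{y\in K}\langle u,y\rangle$ for unit vectors $u$. I would use two elementary facts: (i) because $K$ is closed and convex, a point $z$ lies in $K$ if and only if $\langle u,z\rangle\leq h_K(u)$ for every unit vector $u$ (the half-space description of a closed convex set); and (ii) if $K\subseteq L^{\oplus\epsilon}=L+\overline{B}(0,\epsilon)$, then $h_K(u)\leq h_L(u)+\epsilon$ for every unit $u$, which is immediate from the definition of the dilation and the Cauchy--Schwarz inequality. I would then translate the Hausdorff convergence into a uniform estimate: choosing $N$ so that the Hausdorff distance $d_H(\Theta_n,\Theta)<\delta$ for all $n\geq N$ gives $\Theta\subseteq\Theta_n^{\oplus\delta}$, whence by (ii) $h_{\Theta_n}(u)\geq h_\Theta(u)-\delta$ for every unit $u$ and every $n\geq N$.

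The conclusion would then be a short computation. Fix any $z\in B(x,r)$ and any unit vector $u$. Since $x+(r+\delta)u\in\overline{B}(x,r+\delta)\subseteq\Theta$, one has $h_\Theta(u)\geq\langle u,x\rangle+(r+\delta)$, so $\langle u,x\rangle\leq h_\Theta(u)-r-\delta$; combining this with $\langle u,z\rangle\leq\langle u,x\rangle+r$ yields $\langle u,z\rangle\leq h_\Theta(u)-\delta\leq h_{\Theta_n}(u)$ for all $n\geq N$. As $u$ is arbitrary, fact (i) gives $z\in\Theta_n$, and as $z\in B(x,r)$ is arbitrary, $B(x,r)\subseteq\Theta_n$ for all $n\geq N$, which is the claim.

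The hard part is conceptual rather than computational: Hausdorff closeness only guarantees that points of $\Theta$ lie \emph{near} $\Theta_n$, not that deep interior points actually lie \emph{inside} $\Theta_n$, and indeed the statement would fail for non-convex approximants, which may develop small holes. Convexity, through the half-space description (i), is exactly what upgrades ``near'' to ``inside'', while the margin $\delta$ absorbs the approximation error. The one point deserving care is the very first step: the statement $B(x,r)\subseteq\mathrm{int}(\Theta)$ must be read so that the closed ball sits inside the interior with room to spare, since for a ball whose closure merely touches $\partial\Theta$ the inner approximations $\Theta_n$ could exclude points arbitrarily close to the boundary (take $\Theta=\overline{B}(x,r)$ and $\Theta_n=\overline{B}(x,r-1/n)$).
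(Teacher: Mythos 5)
Your proof is correct, but it follows a genuinely different route from the paper's. The paper argues by contradiction: it first establishes the auxiliary Lemma~\ref{lemma point dehors cpct} (via the variational characterization of the metric projection onto a closed convex set), namely that $y\notin\Theta_n$ implies $B(y,\delta)\nsubseteq\Theta_n^{\oplus\delta}$; then, if some point $y\in B(x,r)$ escaped $\Theta_n$, the margin $B(x,r+\delta)\subset\Theta$ would force $\Theta\nsubseteq\Theta_n^{\oplus\delta}$, contradicting Hausdorff convergence. You instead argue directly through the dual description of convexity: Hausdorff closeness yields $\Theta\subseteq\Theta_n^{\oplus\delta}$, hence $h_{\Theta_n}(u)\geq h_\Theta(u)-\delta$ uniformly in the unit vector $u$, while the margin gives $\langle u,z\rangle\leq h_\Theta(u)-\delta$ for every $z\in B(x,r)$, so the half-space characterization of the closed convex set $\Theta_n$ puts $z$ inside it. The two mechanisms are dual to each other (projection/variational inequality versus supporting hyperplanes), and both use convexity of the approximants $\Theta_n$ --- which is exactly what excludes ``holes'' --- together with the same margin $\delta$; your version has the merit of being direct, of making $N$ explicit (any $N$ with $d_H(\Theta_n,\Theta)<\delta$), and of isolating the convexity input in two standard facts rather than a bespoke lemma. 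Your closing caveat is also well taken: the hypothesis must be read so that the \emph{closed} ball $\overline B(x,r)$ lies in $\mathrm{int}(\Theta)$ (your example $\Theta=\overline B(x,r)$, $\Theta_n=\overline B(x,r-1/n)$ shows the open-ball reading fails); the paper's proof silently relies on the same reading when it asserts the existence of $\delta>0$ with $B(x,r+\delta)\subset\Theta$.
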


\begin{proof}
Since $B(x,r)$ belongs to the interior of $\Theta$, there exists $\delta>0$ such that
\begin{align}\label{inclusion grossisement boule}
 B(x,r+\delta) \subset \Theta. 
 \end{align}
 Assume that the lemma is wrong, then for all $N\in \N$, there exists $n\geq N$ such that $B(x,r) \nsubseteq \Theta_n$. Denote $y$ a point in $B(x,r)$ that does not belong to $\Theta_n$. By Lemma~\ref{lemma point dehors cpct}, $B(y,\delta) \nsubseteq \Theta_n^{\oplus \delta}$. But by~\eqref{inclusion grossisement boule}, $B(y,\delta) \subset \Theta$ so $\Theta \nsubseteq \Theta_n^{\oplus \delta}$ which contradicts the convergence of the sequence  $\lbrace \Theta_n \rbrace_{n\in\N}$ to $\Theta$.  \end{proof}

The following  theorem appears in~\cite{jolivetTCL} in a slightly less general framework, see also~\cite{krickeberg1980}, and is proved in~\cite{bisciobrillingerTCL} in  its present form. It is used in the proofs of our main results, Theorems~\ref{theorem ripley normalite asymptotic} and \ref{theorem pcf normalite asymptotique}.

  \begin{theorem}\label{TCL jolivet modifie}
Let $\lbrace D_n\rbrace_{n\in \N}$ and $\lbrace \widetilde{D}_n \rbrace_{n\in\N}$ be two sequences of regular sets in the sense of Definition~\ref{definition regular set} such that $\frac{|\widetilde{D}_n|} {|D_n|}\xrightarrow{n \rightarrow + \infty} \kappa$ for a given $\kappa >0$. For all $n\in\N$,  let $\lbrace f_{D_n} \rbrace_{n\in\N} $ be a family of functions from $\R^{dp}$ into $\R$. Assume that there exists a bounded function $F$ from $\R^{d(p-1)}$ into $\R^+$ with compact support such that for all $n\in \N$ and $(x_1,\ldots,x_p) \in \R^{dp}$,
 \begin{align}\label{majoration ft cumulant}
  |f_{D_n}(x_1, \ldots, x_p)| \leq \frac{ 1}{|\widetilde{D}_n|} \1_{\lbrace x_1 \in D_n\rbrace} F(x_2-x_1,\ldots,x_p-x_1) .
 \end{align}
Assume further that the point process $\X$ is ergodic, admits moment of any order and is Brillinger mixing. Then, for all $k\geq 2$, we have
 \begin{align}\label{comportement asymptotic cumulant}
 \cum_k \left( \sqrt{\Dn}  N_p\left(f_{D_n} \right) \right) =  O \left( \Dn^{1- \frac{k}{2}} \right).
 \end{align}
 Moreover, if there exists  $\sigma>0$ such that 
 \begin{align}\label{condition limite variance tcl jolivet}
  \mathrm{Var}  \left(\sqrt{\Dn} N_p\left(f_{D_n}\right)\right) \xrightarrow[n\rightarrow +\infty]{} \sigma^2,
 \end{align}
then we have the convergence
  \begin{align}\label{convergence tcl jolivet}
\sqrt{\Dn}\left[  N_p\left(f_{D_n}\right) - \E\left( N_p \left(f_{D_n} \right) \right) \right] \convl \mathcal{N}(0,\sigma^2)
 \end{align}
 and the convergence of all moments to the corresponding moments of $\mathcal{N}(0,\sigma^2)$.
\end{theorem}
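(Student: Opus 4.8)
The statement has two parts: the cumulant estimate \eqref{comportement asymptotic cumulant}, which carries all the analytic content, and the deduction of the central limit theorem \eqref{convergence tcl jolivet} from it, which is the classical method of cumulants. I would prove the cumulant bound first and then conclude by a moment/cumulant argument.

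\textbf{Cumulant bound.} Writing $N_p(f_{D_n})=\sum^{\neq}_{(x_1,\dots,x_p)\in\X^p} f_{D_n}(x_1,\dots,x_p)$, the hypothesis that $\X$ admits moments of all orders guarantees that $\cum_k(\sqrt{\Dn}\,N_p(f_{D_n}))$ is well defined, and multilinearity gives $\cum_k(\sqrt{\Dn}\,N_p(f_{D_n}))=\Dn^{k/2}\cum_k(N_p(f_{D_n}))$. The plan is to expand $\cum_k(N_p(f_{D_n}))$ through the reduced factorial cumulant moment measures of $\X$. Using the standard combinatorial identity expressing the joint cumulant of several sums over distinct $p$-tuples as a finite sum, indexed by partitions of the $kp$ arguments, of integrals of $\prod_{j=1}^k f_{D_n}$ against products of reduced factorial cumulant densities, one reduces the estimate to bounding each such integral. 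The crucial feature is that a cumulant retains only the connected contributions, so each surviving term couples all $k$ copies of $f_{D_n}$ into a single cluster. Exploiting stationarity, one variable can be translated out and integrated freely over $D_n$, producing a factor bounded by $\Dn$ through the indicator $\1_{\{x_1\in D_n\}}$ in \eqref{majoration ft cumulant}; all remaining integrations are over differences and are finite because $F$ has compact support and, by the Brillinger mixing assumption, the reduced factorial cumulant measures of $\X$ are finite. Together with the prefactor $|\widetilde D_n|^{-k}$ carried by the $k$ copies of $f_{D_n}$ and the comparability $|\widetilde D_n|\sim\kappa\Dn$, each term is $O(\Dn^{-k}\cdot\Dn)=O(\Dn^{1-k})$; as there are finitely many terms, $\cum_k(N_p(f_{D_n}))=O(\Dn^{1-k})$ and hence $\cum_k(\sqrt{\Dn}\,N_p(f_{D_n}))=O(\Dn^{1-k/2})$, which is \eqref{comportement asymptotic cumulant}.

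\textbf{From cumulants to the CLT.} Centering by the mean does not affect cumulants of order $\geq2$, so the centered variable $\sqrt{\Dn}[N_p(f_{D_n})-\E(N_p(f_{D_n}))]$ has first cumulant $0$, second cumulant $\mathrm{Var}(\sqrt{\Dn}\,N_p(f_{D_n}))\to\sigma^2$ by \eqref{condition limite variance tcl jolivet}, and $k$-th cumulant $O(\Dn^{1-k/2})\to0$ for every $k\geq3$, since $\Dn\to+\infty$ by regularity of $\{D_n\}$. These are exactly the cumulants of $\mathcal{N}(0,\sigma^2)$. Because the Gaussian law is determined by its moments, converting cumulants into moments (polynomially) and applying the Fr\'echet--Shohat theorem yields both the convergence in distribution \eqref{convergence tcl jolivet} and the convergence of all moments to those of $\mathcal{N}(0,\sigma^2)$.

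\textbf{Main obstacle.} The delicate step is the cumulant expansion together with its order count: writing the joint cumulant of sums over distinct tuples explicitly in terms of factorial cumulant measures, checking that connectedness leaves exactly one free translation (hence a single factor $\Dn$ and no more), and verifying that the remaining difference integrals are finite uniformly in $n$. This is precisely where the compact support of $F$ and the Brillinger mixing of $\X$ are indispensable; the passage to the limit law is then the routine method of cumulants.
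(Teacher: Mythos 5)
Your cumulant-method argument is sound, and it is essentially \emph{the} proof: the paper itself does not prove Theorem~\ref{TCL jolivet modifie} but imports it from~\cite{jolivetTCL} and~\cite{bisciobrillingerTCL}, where the argument is exactly what you sketch — expand $\cum_k\left( N_p\left(f_{D_n}\right)\right)$ over indecomposable partitions into integrals of products of reduced factorial cumulant densities, use stationarity to extract a single factor $O(\Dn)$ via the indicator $\1_{\lbrace x_1\in D_n\rbrace}$, control the remaining difference integrals by the compact support of $F$ together with the finite total variation of the reduced cumulant measures (Brillinger mixing), and conclude by the method of cumulants and Fr\'echet--Shohat. The combinatorial expansion you flag as the main obstacle is indeed the technical core of the cited proof, but your outline contains all the ideas it relies on and no step of it would fail.
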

  
  As a corollary when $p=1$, we retrieve a theorem from \cite{SoshnikovGaussianLimit} giving the asymptotic normality of the estimator of the intensity of a DPP. 
\begin{corollary}\label{corollary convergence normal intensite DPP}
 Let $\X$ be a DPP with kernel $C$ verifying the condition $\K(\rho)$ for a given $\rho>0$  and  $\lbrace D_n\rbrace_{n\in \N}$ be a family of regular sets. Define for all $n\in\N$,
\begin{align*}
 \widehat{\rho}_n =\frac{1}{|D_n|} \sum_{x\in \X} \1_{\lbrace x \in D_n\rbrace}.
\end{align*}
We have the convergence
\begin{align*}
\sqrt{\Dn} \left(  \wrho - \rho \right) \convl N(0,\sigma^2)
\end{align*}
where $\sigma^2 =  \lim_{n\rightarrow +\infty} Var  \left(\sqrt{\Dn} \wrho \right) =\rho -\int_{\R^d} C(x)^2 dx$.
\end{corollary}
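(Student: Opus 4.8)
The plan is to apply Theorem~\ref{TCL jolivet modifie} in the special case $p=1$. First I would take $\widetilde D_n = D_n$ (so that $\kappa = 1$) and introduce the single function $f_{D_n}(x) = \Dn^{-1}\1_{\{x\in D_n\}}$, for which the functional of the theorem reduces to $N_1(f_{D_n}) = \sum_{x\in\X} f_{D_n}(x) = \wrho$. With the choice $F\equiv 1$ the domination~\eqref{majoration ft cumulant} holds with equality, and since $p=1$ the domain $\R^{d(p-1)}$ of $F$ is a single point, so its compact-support requirement is vacuous. The centring is supplied by the first-order Campbell formula: $\E\big(N_1(f_{D_n})\big) = \Dn^{-1}\int_{D_n}\rho^{(1)}(x)\,dx = \rho$, whence $\sqrt{\Dn}\,\big(N_1(f_{D_n}) - \E N_1(f_{D_n})\big) = \sqrt{\Dn}\,(\wrho - \rho)$.

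Next I would check the structural hypotheses required by Theorem~\ref{TCL jolivet modifie} for a DPP whose kernel satisfies $\K(\rho)$. Ergodicity follows from~\cite{Soshnikov:00}; the existence of moments of every order is immediate from the determinantal form of the joint intensities in Definition~\ref{DPPdefinition intro}; and the Brillinger mixing property is exactly the result of~\cite{bisciobrillingerTCL}. With these in hand, the only remaining thing to verify is the variance condition~\eqref{condition limite variance tcl jolivet}.

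The variance limit is the one genuine computation. Writing $\X(D_n)$ for the number of points in $D_n$ and expanding the second factorial moment, then using the expression~\eqref{expression densite cumulant 2} for the reduced cumulant density, I would obtain
\begin{align*}
\mathrm{Var}\big(\X(D_n)\big) = \rho\Dn + \int_{D_n}\int_{D_n} c_{[2]}^{red}(x-y)\,dx\,dy = \rho\Dn - \int_{D_n}\int_{D_n} C(x-y)^2\,dx\,dy,
\end{align*}
and therefore
\begin{align*}
\mathrm{Var}\big(\sqrt{\Dn}\,\wrho\big) = \rho - \frac{1}{\Dn}\int_{D_n}\int_{D_n} C(x-y)^2\,dx\,dy.
\end{align*}
Since $C\in L^2(\R^d)$ by $\K(\rho)$, the function $C^2$ is integrable, and the regularity of $\{D_n\}$ in Definition~\ref{definition regular set} yields the averaging convergence $\Dn^{-1}\int_{D_n}\int_{D_n} C(x-y)^2\,dx\,dy \to \int_{\R^d} C(x)^2\,dx$ (a standard van Hove argument, the boundary contribution being negligible because $\mathcal H_{d-1}(\partial D_n) = O(n^{d-1})$ while $\Dn$ grows like $n^d$). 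Hence $\mathrm{Var}(\sqrt{\Dn}\,\wrho)\to \rho - \int_{\R^d}C(x)^2\,dx =: \sigma^2$.

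Finally, provided $\sigma^2 > 0$ — which holds outside degenerate cases, since $0\leq \F(C)\leq 1$ forces $\int_{\R^d} C^2 = \int \F(C)^2 \leq \int \F(C) = \rho$, with strict inequality as soon as $\F(C) < 1$ on a set of positive measure — condition~\eqref{condition limite variance tcl jolivet} is met and~\eqref{convergence tcl jolivet} delivers $\sqrt{\Dn}(\wrho - \rho)\convl \mathcal N(0,\sigma^2)$, which is the claim. The main obstacle is thus the variance identity and, within it, the van Hove passage to the limit; both being routine, the corollary is essentially a direct specialization of Theorem~\ref{TCL jolivet modifie}.
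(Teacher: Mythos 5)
Your proof is correct and follows exactly the route the paper intends: the paper gives no explicit proof of Corollary~\ref{corollary convergence normal intensite DPP}, merely noting it is the case $p=1$ of Theorem~\ref{TCL jolivet modifie}, and your choices $\widetilde D_n = D_n$, $f_{D_n}(x)=\Dn^{-1}\1_{\{x\in D_n\}}$, the verification of ergodicity, moments and Brillinger mixing, and the variance identity via \eqref{expression densite cumulant 2} followed by the van Hove limit are precisely the details left implicit there. Your closing caveat about $\sigma^2>0$ (needed because \eqref{condition limite variance tcl jolivet} requires $\sigma>0$) is a point the paper glosses over; it fails only in the degenerate case where $\F(C)\in\{0,1\}$ almost everywhere, since $\sigma^2=\int \F(C)(1-\F(C))$.
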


\bibliographystyle{acm}
 \bibliography{Main_Biblio.bib}

\end{document}